\renewcommand{\epsilon}{\varepsilon}
\DeclareMathOperator{\supp}{supp}
\theoremstyle{plain}
\newtheorem{thm}{Theorem}
\newtheorem{lem}[thm]{Lemma}
\newtheorem{prop}[thm]{Proposition}
\newtheorem{cor}[thm]{Corollary}
\theoremstyle{definition}
\newtheorem{defn}[thm]{Definition}
\theoremstyle{remark}
\newtheorem{rem}[thm]{Remark}
\begin{document}

\begin{center}
\large{\textbf{\uppercase{The damped focusing cubic wave equation on a bounded domain}}}

\vspace{\baselineskip}

\large{\textsc{Thomas Perrin}}

\vspace{\baselineskip}

\large{\today}

\vspace{\baselineskip}
\end{center}

\noindent
\textbf{Abstract.} For the focusing cubic wave (or Klein-Gordon) equation on a compact Riemannian manifold of dimension $3$, the dichotomy between global existence and blow-up for solutions starting below the energy of the ground state is known since the work of Payne and Sattinger. In the case of a damped equation, we prove that the dichotomy between global existence and blow-up still holds. In particular, the damping does not prevent blow-up. Assuming that the damping satisfies the geometric control condition, we then prove that any global solution converges to a stationary solution along a time sequence, and that global solutions below the energy of the ground state can be stabilised, adapting the proof of a similar result in the defocusing case. 

\section*{Introduction}

Let $\Omega$ and $\partial \Omega$ be the interior and the boundary of a smooth compact connected Riemannian manifold of dimension $3$. Let $\beta \in \mathbb{R}$ be such that the Poincaré inequality
\[\int_\Omega \left(\left\vert \nabla u \right\vert^2 + \beta \vert u \vert^2 \right) \mathrm{d}x \gtrsim \int_\Omega \vert u \vert^2 \mathrm{d}x\]
is satisfied, for all $u \in H_0^1(\Omega)$. This specifically requires $\beta > 0$ if $\partial \Omega$ is empty. For real-valued initial data $\left( u^0, u^1 \right) \in H_0^1(\Omega) \times L^2(\Omega)$, consider the equation
\[\left \{
\begin{array}{rcccl}\label{KG_nL}\tag{$\ast$}
\square u + \beta u & = & u^3 & \quad & \text{in } \mathbb{R} \times \Omega, \\
(u(0), \partial_t u(0)) & = & \left( u^0, u^1 \right) & \quad & \text{in } \Omega, \\
u & = & 0 & \quad & \text{on } \mathbb{R} \times \partial \Omega.
\end{array}
\right.\]
If $\beta = 0$, then (\ref{KG_nL}) is the focusing cubic wave equation, and if $\beta > 0$, it is the focusing cubic Klein-Gordon equation. For $\left( u^0, u^1 \right) \in H_0^1(\Omega) \times L^2(\Omega)$, write $\left\Vert u^0 \right\Vert_{H_0^1}^2 = \int_\Omega \left(\left\vert \nabla u^0 \right\vert^2 + \beta \vert u^0 \vert^2 \right) \mathrm{d}x$ and set 
\[J(u^0) = \frac{1}{2} \left\Vert u^0 \right\Vert_{H_0^1}^2 - \frac{1}{4} \left\Vert u^0 \right\Vert_{L^4}^4 \quad \text{ and } \quad E\left( u^0, u^1 \right) = J(u^0) + \frac{1}{2} \left\Vert u^1 \right\Vert_{L^2}^2.\]
The functionals $J$ and $E$ are respectively called \emph{static energy} and \emph{energy}. The energy of a solution of (\ref{KG_nL}) at time $t \in \mathbb{R}$ is defined by $E\left( u(t), \partial_t u(t) \right)$, and is conserved. Let $Q$ be a ground state of (\ref{KG_nL}), that is, a positive stationary solution of (\ref{KG_nL}) of minimal energy 
\[E(Q, 0) = J(Q) = d > 0.\]
See Theorem \ref{thm_existence_uniqueness_Q} for a precise definition. For $u^0 \in H_0^1(\Omega)$, write 
\begin{equation}\label{eq_def_K}
K(u^0) = \left\Vert u^0 \right\Vert_{H_0^1}^2 - \left\Vert u^0 \right\Vert_{L^4}^4,
\end{equation}
and set
\[\left \{
\begin{array}{c}
\mathscr{K}^+ = \left\{ \left( u^0, u^1 \right) \in H_0^1(\Omega) \times L^2(\Omega), E\left(u^0, u^1 \right) < d, K(u^0) \geq 0 \right\}, \\
\mathscr{K}^- = \left\{ \left( u^0, u^1 \right) \in H_0^1(\Omega) \times L^2(\Omega), E\left(u^0, u^1 \right) < d, K(u^0) < 0 \right\}.
\end{array}
\right.\]
The following result is due to Payne and Sattinger \cite{Payne-Sattinger}.

\begin{thm}\label{thm_dichotomy_introduction}
The spaces $\mathscr{K}^+$ and $\mathscr{K}^-$ are stable under the flow of \textnormal{(\ref{KG_nL})}. A solution starting from $\mathscr{K}^+$ is defined on $\mathbb{R}$, and a solution starting from $\mathscr{K}^-$ blows up in finite positive and negative times.
\end{thm}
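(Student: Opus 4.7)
The plan is to follow the classical Payne-Sattinger argument, whose backbone is the variational identity
\[d = \inf\left\{ J(u) : u \in H_0^1(\Omega) \setminus \{0\},\ K(u) = 0 \right\},\]
attained at $u = \pm Q$, which I would recall via standard constrained minimisation together with elliptic regularity. The algebraic identities
\[J(u) - \tfrac{1}{4}K(u) = \tfrac{1}{4}\|u\|_{H_0^1}^2, \qquad J(u) - \tfrac{1}{2}K(u) = \tfrac{1}{4}\|u\|_{L^4}^4\]
will be used throughout.

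Invariance of $\mathscr{K}^{\pm}$ follows by continuity together with energy conservation. The condition $E < d$ is preserved along the flow, so if a $\mathscr{K}^+$-orbit were to exit at some positive time, there would exist a first time $t_*$ with $K(u(t_*)) = 0$. Either $u(t_*) \neq 0$, and the variational identity forces $J(u(t_*)) \geq d > E$, a contradiction; or $u(t_*) = 0$, which is excluded because $K(u) > 0$ for small nonzero $u \in H_0^1$. The invariance of $\mathscr{K}^-$ is symmetric, with the Sobolev embedding $\|u\|_{L^4}^4 \lesssim \|u\|_{H_0^1}^4$ combined with $K(u) < 0$ providing a uniform bound $\|u(t)\|_{H_0^1} \gtrsim 1$ that again rules out any exit through $0$. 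Global existence on $\mathscr{K}^+$ is then immediate from the first identity: $\|u(t)\|_{H_0^1}^2 \leq 4J(u(t)) \leq 4E < 4d$, and energy conservation gives $\|\partial_t u(t)\|_{L^2}^2 \leq 2E$, so the uniform energy-space bound prevents blow-up via the standard local well-posedness theory.

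For finite-time blow-up on $\mathscr{K}^-$, I would apply a Payne-Sattinger concavity argument to $M(t) = \|u(t)\|_{L^2}^2$. The equation yields $M''(t) = 2\|\partial_t u(t)\|_{L^2}^2 - 2K(u(t))$. To exploit $K < 0$ quantitatively, I would scale: for $u \in \mathscr{K}^-$ there is a unique $\lambda^* \in (0,1)$ with $K(\lambda^* u) = 0$, hence $d \leq J(\lambda^* u) = \tfrac{\lambda^{*4}}{4}\|u\|_{L^4}^4$ and so $\|u\|_{L^4}^4 \geq 4d$; the second identity then reshapes into the pointwise bound $-K(u(t)) \geq 2(d-E) + \|\partial_t u(t)\|_{L^2}^2$. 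Combined with Cauchy-Schwarz $(M'(t))^2 \leq 4 M(t)\|\partial_t u(t)\|_{L^2}^2$, this yields $M M'' \geq (M')^2 + 4(d-E)M$. To promote this logarithmic convexity into genuine finite-time blow-up, I would introduce a Levine-type shift $F(t) = M(t) + \beta(T-t)^2$, tune $\beta$ and $T$ in terms of the initial data so that $F F'' \geq (1+\nu)(F')^2$ for some $\nu > 0$ and $F'(0) < 0$, and conclude that $F^{-\nu}$ is concave and vanishes in finite time. The negative-time statement then follows from the time-reversal symmetry of (\ref{KG_nL}).

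The main obstacle I expect is precisely this last step: the raw estimate $MM'' \geq (M')^2 + 4(d-E)M$ on its own only gives super-exponential growth of $M$, not finite-time blow-up, and to produce the strict exponent $\nu > 0$ the reservoir $4(d-E)M$ must be carefully distributed between the cross-terms introduced by the shift through an AM-GM split, with $\beta$ and $T$ chosen compatibly with $(u^0, u^1)$; the remainder is algebra.
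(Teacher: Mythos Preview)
The paper does not prove Theorem~\ref{thm_dichotomy_introduction} itself; it is quoted from Payne--Sattinger. Your invariance and global-existence arguments are correct and match the standard ones (compare the paper's Lemma~\ref{lem_K+-_inv_flow_damped} and the proof following it for the damped analogue).

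The blow-up argument, however, has a genuine gap. Your pointwise bound $-K(u)\geq 2(d-E)+\|\partial_t u\|_{L^2}^2$, obtained from the \emph{second} algebraic identity together with $\|u\|_{L^4}^4\geq 4d$, carries only the coefficient $1$ on $\|\partial_t u\|_{L^2}^2$. This feeds into $M''\geq 4\|\partial_t u\|_{L^2}^2+4(d-E)$ and hence the borderline inequality $MM''\geq (M')^2+4(d-E)M$. That inequality does \emph{not} force finite-time blow-up, and the Levine shift cannot manufacture the missing strict exponent $\nu>0$. Indeed, $M(t)=e^{(t+a)^2}$ with $a$ large satisfies $MM''-(M')^2=2M^2\geq 4(d-E)M$ for all $t\geq 0$, and with the choice $\|\partial_t u\|_{L^2}^2:=(t+a)^2M$ one also has $(M')^2=4M\|\partial_t u\|_{L^2}^2$ and $M''=4\|\partial_t u\|_{L^2}^2+2M\geq 4\|\partial_t u\|_{L^2}^2+4(d-E)$, so every ingredient you invoke is met yet $M$ is entire. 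Concretely, in the Levine computation for $F=M+\beta(T-t)^2$ the Cauchy--Schwarz step gives $(F')^2\leq 4F(\|\partial_t u\|_{L^2}^2+\beta)$, so $FF''\geq(1+\nu)(F')^2$ reduces to $-K(u)\geq(1+2\nu)(\|\partial_t u\|_{L^2}^2+\beta)$; your bound provides exactly the coefficient $1$, not $1+2\nu$, and the reservoir $4(d-E)M$ cannot be traded against the unbounded term $\nu(M')^2$.

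The fix is to use your \emph{first} identity instead. Writing $K(u)=4E-2\|\partial_t u\|_{L^2}^2-\|u\|_{H_0^1}^2$ gives
\[
M''=6\|\partial_t u\|_{L^2}^2+2\|u\|_{H_0^1}^2-8E.
\]
The sharp Sobolev inequality $\|u\|_{L^4}\,\|Q\|_{L^4}\leq\|u\|_{H_0^1}$ (obtained by the same scaling you already use) combined with $K(u)<0$ yields $\|u\|_{H_0^1}^2\geq\|Q\|_{L^4}^4=4d$, whence $M''\geq 6\|\partial_t u\|_{L^2}^2+8(d-E)$. Now Cauchy--Schwarz gives $MM''\geq\tfrac{3}{2}(M')^2$ directly; since $M''\geq 8(d-E)>0$ forces $M\to\infty$, the positive concave function $M^{-1/2}$ must hit zero in finite time, and no Levine shift is needed. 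This is exactly the route the paper follows for the damped analogue in Section~2.2 (Case~1).
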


For $\gamma \in L^\infty(\Omega, \mathbb{R}_+)$, our main focus is the damped equation
\[\left \{
\begin{array}{rcccl}\label{KG_nL_damped}\tag{$\ast \ast$}
\square u + \gamma \partial_t u + \beta u & = & u^3 & \quad & \text{in } \mathbb{R}_+ \times \Omega, \\
(u(0), \partial_t u(0)) & = & \left( u^0, u^1 \right) & \quad & \text{in } \Omega, \\
u & = & 0 & \quad & \text{on } \mathbb{R}_+ \times \partial \Omega,
\end{array}
\right.\]
with $\left( u^0, u^1 \right) \in H_0^1(\Omega) \times L^2(\Omega)$. For $t \geq 0$, the energy equality is
\[E\left( u(t), \partial_t u(t) \right) = E\left( u^0, u^1 \right) - \int_0^t \int_\Omega \gamma(x) \left\vert \partial_t u(s, x) \right\vert^2 \mathrm{d}x \mathrm{d}s.\]
In particular, the energy of a solution is nonincreasing. Our first result is the following theorem.

\begin{thm}\label{thm_dichotomy_damped_introduction}
Fix $\gamma \in L^\infty(\Omega, \mathbb{R}_+)$. The spaces $\mathscr{K}^+$ and $\mathscr{K}^-$ are stable under the forward flow of \textnormal{(\ref{KG_nL_damped})}. A solution of \textnormal{(\ref{KG_nL_damped})} initiated in $\mathscr{K}^+$ is defined on $\mathbb{R}_+$, and a solution of \textnormal{(\ref{KG_nL_damped})} initiated in $\mathscr{K}^-$ blows up in finite time $t > 0$. 
\end{thm}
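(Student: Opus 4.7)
My plan is to adapt the classical Payne--Sattinger proof to (\ref{KG_nL_damped}) by exploiting that the damping only dissipates the energy: $t \mapsto E(u(t), \partial_t u(t))$ is nonincreasing, so the condition $E < d$ automatically propagates along the forward trajectory, and only the sign of $K(u(t))$ requires a dynamical argument.

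For the invariance of $\mathscr{K}^\pm$, I would argue by continuity. Suppose, for contradiction, that $K(u(t))$ changes sign at a first time $t_0 > 0$; then $K(u(t_0)) = 0$. One can rule out $u(t_0) = 0$ in both cases: in $\mathscr{K}^+$, the Sobolev embedding $H_0^1 \hookrightarrow L^4$ yields $K(v) \geq \|v\|_{H_0^1}^2 (1 - C \|v\|_{H_0^1}^2) > 0$ in a neighbourhood of $0$, which contradicts the definition of $t_0$; in $\mathscr{K}^-$, the scaling $\lambda^\star = (\|u\|_{H_0^1}^2/\|u\|_{L^4}^4)^{1/2} \in (0,1)$ satisfies $K(\lambda^\star u) = 0$, so $d \leq J(\lambda^\star u) = \tfrac{(\lambda^\star)^2}{4}\|u\|_{H_0^1}^2$ forces $\|u(t)\|_{H_0^1}^2 > 4 d$ on $[0, t_0)$, so $u(t_0) \neq 0$ by continuity. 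The variational characterisation $d = \inf\{J(v) : v \in H_0^1 \setminus \{0\},\ K(v) = 0\}$ then gives $J(u(t_0)) \geq d$, contradicting $J(u(t_0)) \leq E(u(t_0), \partial_t u(t_0)) < d$. Global existence in $\mathscr{K}^+$ follows routinely from $K(u) \geq 0 \Rightarrow J(u) \geq \tfrac{1}{4} \|u\|_{H_0^1}^2$, which together with energy dissipation bounds $\|u(t)\|_{H_0^1}^2 + \|\partial_t u(t)\|_{L^2}^2$ uniformly in $t$, so the standard blow-up criterion extends $u$ to $\mathbb{R}_+$.

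Blow-up in $\mathscr{K}^-$ is the substantial part. Following a Levine-type concavity method, I would consider
\[N(t) = \|u(t)\|_{L^2}^2 + \int_0^t \int_\Omega \gamma(x) |u(s,x)|^2 \, \mathrm{d}x \, \mathrm{d}s + b,\]
with $b > 0$ chosen later. The extra time integral is calibrated to cancel the damping cross term in $\tfrac{d^2}{dt^2} \|u\|_{L^2}^2$, so that
\[N'(t) = 2 \int_\Omega u \partial_t u \, \mathrm{d}x + \int_\Omega \gamma u^2 \, \mathrm{d}x, \qquad N''(t) = 2 \|\partial_t u\|_{L^2}^2 - 2 K(u(t)).\]
The scaling used above also upgrades the sign of $K$ to the quantitative bound $-K(u(t)) \geq 4(d - J(u(t))) \geq 4(d - E(u^0, u^1)) =: \eta > 0$ in $\mathscr{K}^-$. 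The target is then a concavity inequality $N N'' \geq (1 + \sigma)(N')^2$ for some $\sigma > 0$, which makes $N^{-\sigma}$ concave; since $N'' \geq 2\eta$ eventually forces $N'$ positive, the tangent line of $N^{-\sigma}$ at some $t_1$ has negative slope and reaches $0$ in finite time, i.e.\ $N \to +\infty$ in finite time, and the Poincaré inequality transports the blow-up to $\|u(t)\|_{H_0^1}$.

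The main obstacle is precisely this concavity inequality. Writing $\tfrac{1}{2} N'(t) = \int_\Omega u(\partial_t u + \tfrac{\gamma}{2} u) \, \mathrm{d}x$, Cauchy--Schwarz gives $(N')^2 \leq 4 \|u\|_{L^2}^2 \|\partial_t u + \tfrac{\gamma}{2} u\|_{L^2}^2$, but expanding the second factor produces an indefinite cross term $\int \gamma u \partial_t u = \tfrac{1}{2} \tfrac{d}{dt}\|\sqrt{\gamma} u\|_{L^2}^2$ with no fixed sign. I would control it in integrated form using the damping identity $\int_0^t \|\sqrt{\gamma} \partial_t u\|_{L^2}^2 \, \mathrm{d}s = E(u^0, u^1) - E(u(t), \partial_t u(t)) \leq E(u^0, u^1)$, together with the pointwise bound $\|\partial_t u\|_{L^2}^2 \leq 2 E(u^0, u^1)$, to bound the error by $\sqrt{N}$ times dissipation quantities. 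Taking $b$ sufficiently large (depending on $\gamma$, $E(u^0, u^1)$ and the initial data) then absorbs both the boundary contributions at $t = 0$ and the remaining errors into the strictly positive buffer $\eta$ coming from $-K(u(t))$, at which point the concavity inequality closes and finite-time blow-up follows.
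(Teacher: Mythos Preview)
Your treatment of invariance and global existence is essentially the paper's (the paper packages the invariance step through Lemma~\ref{lem_lim_inf_J_u_n}, but the content is the same continuity/variational argument you give).

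The blow-up argument, however, has a genuine gap. Both inequalities you invoke to close the concavity estimate are false in $\mathscr{K}^-$. The bound
\[
\int_0^t \left\Vert \sqrt{\gamma}\,\partial_t u \right\Vert_{L^2}^2 \,\mathrm{d}s \;=\; E(u^0,u^1) - E(u(t),\partial_t u(t)) \;\leq\; E(u^0,u^1)
\]
requires $E(u(t),\partial_t u(t)) \geq 0$, and the pointwise bound $\left\Vert \partial_t u \right\Vert_{L^2}^2 \leq 2E(u^0,u^1)$ requires $J(u(t)) \geq 0$. Neither holds in $\mathscr{K}^-$: since $K(u(t)) < 0$, one has $J(u(t)) = \tfrac{1}{2}K(u(t)) + \tfrac{1}{4}\Vert u(t)\Vert_{L^4}^4$ with no lower bound, and indeed the energy can tend to $-\infty$ along the trajectory. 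In that regime the dissipation integral diverges and $\Vert \partial_t u\Vert_{L^2}$ is unbounded, so your absorption into the constant $b$ cannot work as stated. The modified functional $N(t)$ is a sound idea (it does cancel the cross term in $N''$), but the Cauchy--Schwarz step reintroduces $\int_\Omega \gamma u\,\partial_t u$ and $\tfrac{1}{4}\int_\Omega \gamma^2 u^2$ in $(N')^2$, and you have not controlled these without the invalid bounds.

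The paper takes a different route: it keeps the plain $M(t) = \Vert u(t)\Vert_{L^2}^2$, accepts the cross term $-2\int_\Omega \gamma u\,\partial_t u$ in $M''$, and bounds it by $\epsilon \Vert u\Vert_{H_0^1}^2 - \tfrac{1}{\epsilon}E'(t)$ (or by $\tfrac{1}{\epsilon}\Vert u\Vert_{L^4}^2 + \epsilon\Vert \partial_t u\Vert_{L^2}^2$), the first term being absorbed via the quantitative lower bound $-K(u(t)) \geq c\Vert u(t)\Vert_{H_0^1}^2$ of Lemma~\ref{lem_improving_K(u)_ineq}\,(ii). Crucially, it then \emph{splits into two cases}: energy bounded below, and energy tending to $-\infty$, with a separate concavity argument in each. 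Your single-case argument must either incorporate this dichotomy or find a different way to avoid the unbounded-energy scenario.
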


In particular, below the energy of the ground state, a blow-up solution of (\ref{KG_nL}) cannot be stabilised by the addition of a damping term of the form $\gamma \partial_t u$. 

We will use the notion of generalized geodesic, for which we refer to \cite{MelroseSjostrand}. In this article, we always assume that no generalized geodesic has a contact of infinite order with $\partial \Omega$ (see \cite{BLR} for some details about this assumption).

\begin{defn}
For $\omega \subset \Omega$, we say that $\omega$ satisfies the \emph{Geometric Control Condition} (in short, GCC) if there exists $L > 0$ such that any generalized geodesic of $\Omega$ of length $L$ meets the set $\omega$.
\end{defn}

In the defocusing case, that is, with the non-linearity $u^3$ changed into $-u^3$, the energy of a solution is positive and governs its size. All corresponding solutions are defined on $\mathbb{R}$, and the only stationary solution is zero. In this case, Joly and Laurent \cite{Joly-Laurent} proved a stabilization result under the GCC, on bounded or unbounded domains. In \cite{JolyLaurentBis}, the same authors consider the case of a non-linearity that is asymptotically defocusing and establish a semi-global controllability result. For this type of non-linearity, non-zero stationary solutions exist, but no blow-up phenomenon occurs.

In the focusing case we consider here, few stabilization or controllability results exist in the literature. For a focusing equation on the entire space, or outside a star-shaped obstacle, Alaoui, Ibrahim, and Nakanishi \cite{Alaoui} established stabilization for solutions starting from $\mathcal{K}^+$ when the damping is greater than a positive constant outside a ball. Note that assuming that the obstacle is star-shaped is stronger than the GCC. Burq, Raugel, and Schlag \cite{BurqRaugelSchlag} proved a dichotomy property for radial solutions of the focusing damped Klein-Gordon equation on the entire space with a constant positive damping: such a solution either blows up in a finite positive time or exists globally in positive time and converges to a stationary solution. In \cite{Burq_Raugel_Schlag_weak}, the same authors established a similar dichotomy property, again in the radial case, with a damping independent of $x$ and converging to zero as $t$ tends to infinity. In the case of the cubic focusing radial Klein-Gordon equation on a closed ball of $\mathbb{R}^3$, Krieger and Xiang \cite{KriegerXiang} studied the stability of the system near the static solution $u = 1$, under different dissipative boundary conditions. 

Our second result is the following theorem, which can be seen as an extension of the stabilisation property proved in \cite{Joly-Laurent} to the focusing case.

\begin{thm}\label{thm_stab_GCC_introduction}
Suppose $\gamma(x) \geq \alpha$ for almost all $x \in \omega$, with $\alpha > 0$ a constant and $\omega \subset \Omega$ an open set fulfilling the GCC. Then for any $E_0 \in [0, d)$, there exist $C > 0$ and $\lambda > 0$ such that for all $\left( u^0, u^1 \right) \in \mathscr{K}^+$, the solution $u$ of \textnormal{(\ref{KG_nL_damped})} satisfies
\[\left\Vert u(t) \right\Vert_{H_0^1} + \left\Vert \partial_t u(t) \right\Vert_{L^2} \leq C e^{-\lambda t}, \quad t \geq 0,\]
if $E\left( u^0, u^1 \right) \leq E_0$.
\end{thm}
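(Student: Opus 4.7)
The plan is to reduce the statement to an observability-type inequality for solutions trapped in $\mathscr{K}^+$ below energy $E_0$, and then establish that inequality by a compactness-uniqueness argument combining linear GCC observability with a unique-continuation result for the nonlinear equation.

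\emph{Step 1: reduction to observability.} For any $(u^0, u^1) \in \mathscr{K}^+$ with $E(u^0, u^1) \leq E_0 < d$, a standard convexity analysis of $J$ along the ray $\lambda u^0$, exploiting the variational characterization of $Q$, yields a coercivity estimate
\[ c_0 \bigl( \|u\|_{H_0^1}^2 + \|\partial_t u\|_{L^2}^2 \bigr) \leq E(u, \partial_t u) \leq \tfrac{1}{2} \bigl( \|u\|_{H_0^1}^2 + \|\partial_t u\|_{L^2}^2 \bigr), \]
with $c_0 = c_0(E_0) > 0$. By Theorem~\ref{thm_dichotomy_damped_introduction} and the monotonicity of the energy, $\mathscr{K}^+ \cap \{E \leq E_0\}$ is forward-invariant, so it suffices to prove uniform exponential decay of $E(t) := E(u(t), \partial_t u(t))$ on this set. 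Given $T > 0$ chosen according to the GCC constant $L$, the decay will follow from the observability inequality
\[ E(0) \leq C \int_0^T \!\! \int_\omega |\partial_t u|^2 \, \mathrm{d}x \, \mathrm{d}t, \]
since combining it with the energy identity yields $E(T) \leq (1 - \alpha/C) E(0)$, and iterating on the intervals $[kT, (k+1)T]$ gives the exponential decay.

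\emph{Step 2: observability by compactness-uniqueness.} Argue by contradiction: assume there is a sequence of solutions $u_n$ starting in $\mathscr{K}^+ \cap \{E \leq E_0\}$ with
\[ \int_0^T \!\! \int_\omega |\partial_t u_n|^2 \, \mathrm{d}x \, \mathrm{d}t \leq \tfrac{1}{n} E_n, \qquad E_n := E(u_n(0), \partial_t u_n(0)). \]
By Step~1 the sequence $(u_n, \partial_t u_n)$ is bounded in $L^\infty_t(H_0^1 \times L^2)$. Passing to subsequences, two regimes must be distinguished. If $E_n \to 0$, rescale $v_n := u_n / \sqrt{E_n}$: this satisfies a damped Klein-Gordon equation with nonlinearity $E_n v_n^3$ tending to zero (thanks to the compact embedding $H_0^1 \hookrightarrow L^6$), and any weak limit $v$ is a solution of the linear damped Klein-Gordon equation with $\partial_t v \equiv 0$ on $\omega \times (0,T)$. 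The linear GCC observability inequality, together with a microlocal defect measure analysis of $v_n - v$ à la Burq-Gérard (as used by Joly-Laurent in the defocusing case), forces $v \equiv 0$ and strong convergence $v_n \to 0$, contradicting the normalization $E(v_n(0), \partial_t v_n(0)) = 1$. If instead $E_n$ remains bounded below, the weak limit $u$ is itself a nontrivial finite-energy solution of the damped cubic equation with $\partial_t u \equiv 0$ on $\omega \times (0,T)$. Unique continuation for semilinear wave equations with real-analytic nonlinearity (Robbiano-Zuily, Laurent) then propagates this vanishing to all of $\Omega \times (0,T)$, so $u$ is stationary; but $u \in \mathscr{K}^+$ with $J(u) \leq E_0 < d$ together with the variational characterization of the ground state force $u \equiv 0$, again a contradiction.

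\emph{Main obstacle.} The principal difficulty lies in the compactness-uniqueness step. Strong convergence of $u_n^3$ is accessible via the compact embedding $H_0^1(\Omega) \hookrightarrow L^p(\Omega)$ for $p < 6$, but one must still control the high-frequency part of $u_n$ using microlocal defect measures and verify that these measures propagate along generalized geodesics up to the boundary, which is the delicate part of the Joly-Laurent argument. The focusing sign introduces no new obstruction at this step precisely because the assumption $(u^0, u^1) \in \mathscr{K}^+ \cap \{E \leq E_0\}$ keeps us in a region where $E$ is coercive and contains no nontrivial stationary solution; the sole genuinely new ingredient relative to Joly-Laurent is thus the forward-invariance of $\mathscr{K}^+$ furnished by Theorem~\ref{thm_dichotomy_damped_introduction}.
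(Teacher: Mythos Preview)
Your reduction to an observability inequality and the overall two-regime dichotomy (small energy vs.\ energy bounded below) matches the paper, but the technical implementation differs in two significant ways.

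First, the paper does \emph{not} work on a fixed interval $[0,T]$. In its contradiction argument it lets $T_n\to+\infty$ and applies the asymptotic-compactness Proposition~\ref{prop_cv_sequence_time} at the midpoint $T_n/2$: this produces a global-in-time limit solution with vanishing damped energy flux on all of $\mathbb{R}$, which is then shown to be a stationary solution and hence, since its static energy is below $d$, equal to $0$. That forces $E_n\to 0$ along a subsequence, so the paper never actually treats your ``$E_n$ bounded below'' case as a separate contradiction; it is absorbed into Proposition~\ref{prop_cv_sequence_time}. The small-energy endgame is then handled by a direct Duhamel computation plus Theorem~\ref{thm_stab_linear}, with no appeal to linear observability or microlocal analysis.

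Second, the paper uses \emph{no} microlocal defect measures. Compactness comes instead from the regularity gain of Corollary~\ref{cor_gain_of_regularity_nL} (a Dehman--Lebeau--Zuazua type smoothing of $u^3$), which shows the Duhamel part of $U_n(T_n)$ is bounded in $X^\epsilon$ and hence precompact in $X^0$. The unique-continuation step you invoke (Robbiano--Zuily) needs the coefficient $3u^2$ to be smooth and analytic in time; this is exactly what the paper establishes in Step~2 of Proposition~\ref{prop_cv_sequence_time} via the Hale--Raugel analyticity theorem, and that argument relies essentially on the limit being a \emph{global} bounded solution (so that the Duhamel series over $(-\infty,t]$ converges in $X^1$). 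On a fixed interval $[0,T]$ as in your sketch this regularity/analyticity step is not available, so your second regime has a genuine gap unless you import the full machinery of Proposition~\ref{prop_cv_sequence_time} anyway. Note also that your attribution of the defect-measure route to Joly--Laurent is inaccurate: their argument, which the paper follows closely, is the regularity-gain one.
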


For a solution starting from $\mathscr{K}^+$, we will see that the energy can be used almost as in the defocusing case, allowing us to adapt most of the arguments of \cite{Joly-Laurent}. Note that $C$ and $\lambda$ depend on $E_0$ (see Remark \ref{rem_constant_dependance}).

The proof of Theorem \ref{thm_stab_GCC_introduction} will rely on an asymptotic compactness result, which roughly states that for all uniformly bounded sequence of global solutions of (\ref{KG_nL_damped}), one can find a subsequence which converges to a stationary solution of (\ref{KG_nL}) along a time-sequence; see Proposition \ref{prop_cv_sequence_time} for a precise statement. It implies the following theorem.

\begin{thm}\label{thm_cv_global_solution}
Fix $\gamma \in L^\infty(\Omega, \mathbb{R}_+)$ satisfying the assumption of Theorem \ref{thm_stab_GCC_introduction}. Let $\left( u^0, u^1 \right) \in H_0^1(\Omega) \times L^2(\Omega)$ be such that the associated solution $u$ of \textnormal{(\ref{KG_nL_damped})} exists on $\mathbb{R}_+$. 
\begin{enumerate}
\item For all sequence $(T_n)_{n \in \mathbb{N}}$ such that $T_n \rightarrow + \infty$, there exist an increasing function $\phi: \mathbb{N} \rightarrow \mathbb{N}$ and a stationary solution $w$ of \textnormal{(\ref{KG_nL})} such that
\[\left( u(T_{\phi(n)} + \cdot), \partial_t u(T_{\phi(n)} + \cdot) \right) \xrightarrow{n \rightarrow \infty} (w, 0)\]
in $L^\infty_\mathrm{loc}(\mathbb{R}, H_0^1(\Omega) \times L^2(\Omega))$.
\item Assume that there exists an at most countable number of stationary solution $w$ of \textnormal{(\ref{KG_nL})} such that
\begin{equation}\label{eq_thm_cv_bounded_solution}
J(w) = \liminf_{t \rightarrow + \infty} E\left( u(t), \partial_t u(t) \right).
\end{equation}
Then there exists a stationary solution $w$ of \textnormal{(\ref{KG_nL})} such that 
\[\left( u(t + \cdot), \partial_t u(t + \cdot) \right) \xrightarrow{t \rightarrow + \infty} (w, 0)\]
in $L^\infty_\mathrm{loc}(\mathbb{R}, H_0^1(\Omega) \times L^2(\Omega))$.
\end{enumerate}
\end{thm}

\begin{rem}
For example, if $\Omega$ is an open subset of $\mathbb{R}^3$ and $\liminf E\left( u(t), \partial_t u(t) \right) \leq J(Q)$, then there are at most 3 stationary solutions satisfying (\ref{eq_thm_cv_bounded_solution}), which are $0$, $Q$ and $-Q$ (see Theorem \ref{thm_existence_uniqueness_Q}).
\end{rem}

\begin{rem}
In a companion paper \cite{perrinUniform}, we prove that all solutions of \textnormal{(\ref{KG_nL_damped})} defined on $\mathbb{R}_+$ are bounded in the energy space $H_0^1(\Omega) \times L^2(\Omega)$, generalizing a result of Cazenave \cite{Cazenave}. Our proof of Theorem \ref{thm_cv_global_solution} relies on this fact.
\end{rem}

On the one hand, observe that Theorem \ref{thm_cv_global_solution} applies to any solution defined on $\mathbb{R}_+$, while Theorem \ref{thm_stab_GCC_introduction} only applies to solutions with energy below the energy of the ground state. On the other hand, Theorem \ref{thm_stab_GCC_introduction} provides an exponential convergence rate, whereas Theorem \ref{thm_cv_global_solution} provides no convergence rate.

In \cite{BurqRaugelSchlag}, the authors proved that any radial solution defined on $\mathbb{R}_+$ converges to an asymptotic solution, when considering $\Omega = \mathbb{R}^d$. They make no assumption regarding the number of stationary solutions at a fixed energy level. Their damping is a positive constant, whereas Theorem \ref{thm_cv_global_solution} holds true for a damping satisfying the GCC. The fact that a solution defined on $\mathbb{R}_+$ is bounded in the energy space arises as a consequence of their main result, using dynamical systems arguments.


\paragraph{Outline of the article.} In Section 1, we recall some basic facts about (\ref{KG_nL_damped}) and the stabilisation of the linear version of (\ref{KG_nL_damped}), and we prove some technical results related to the ground state $Q$. In Section 2, we prove Theorem \ref{thm_dichotomy_damped_introduction}. In Section 3, we establish the asymptotic compactness result (Proposition \ref{prop_cv_sequence_time}), and we show that it implies Theorem \ref{thm_cv_global_solution}. In Section 4, we first prove an easy stabilisation property in the case of a positive damping, and then we prove Theorem \ref{thm_stab_GCC_introduction}, by adapting the proof of \cite{Joly-Laurent}.

\paragraph{Acknowledgements.} I warmly thank Thomas Duyckaerts and Jérôme Le Rousseau for their constant support and guidance. 

\vspace{\baselineskip}
\noindent
\textit{Keywords:} damped wave equations, nonlinear wave equation, focusing nonlinearity, exponential stabilization, blow-up, convergence to equilibrium.
\newline
\textit{MSC2020:} 35B40, 35B44, 35L71, 93D20.







\section{Preliminaries}

\subsection{The Cauchy problem and some properties of the linear equation}

In this article, we only consider real-valued solutions of wave equations. For $s \in [0, 1]$, let $H_0^s(\Omega)$ denote the complex interpolation space between $L^2(\Omega)$ and $H_0^1(\Omega)$. Define
\begin{equation}\label{eq_def_X^s}
X^s = \left( H^{1 + s}(\Omega) \cap H_0^1(\Omega) \right) \times H_0^s(\Omega).
\end{equation}
For example, one has $X^0 = H_0^1(\Omega) \times L^2(\Omega)$ and $X^1 = \left( H^2(\Omega) \cap H_0^1(\Omega) \right) \times H_0^1(\Omega)$. For $s \geq 0$, write $D\left( \Delta^s \right)$ for the iterated domain of the Dirichlet Laplacian (defined by interpolation for $s \notin \mathbb{N}$). Then for $s\in [0, 1]$, one has
\begin{equation}\label{eq_X^s_domains_Delta}
X^s = D\left( \Delta^{\frac{1 + s}{2}} \right) \times D\left( \Delta^{\frac{s}{2}} \right)
\end{equation}
so the norm of $X^s$ can be represented by a Fourier series. Consider the unbounded operator $A: X^0 \rightarrow X^0$ with domain $D(A) = X^1$ defined by
\[A = \left(
\begin{array}{cc}
0 & \mathrm{Id} \\ 
\Delta - \beta & - \gamma
\end{array}
\right).\]
Write $e^{tA}$ for the associated semi-group. The following result about the Cauchy problem corresponding to the linear version of (\ref{KG_nL_damped}) is well-known.

\begin{thm}\label{thm_existence_linear_waves}
For $s \in [0, 1]$, $\left( u^0, u^1 \right) \in X^s$, $\gamma \in L^\infty(\Omega, \mathbb{R}_+)$, and $g \in L_\mathrm{loc}^1(\mathbb{R}, H_0^s(\Omega))$, there exists a unique solution 
\[u \in \mathscr{C}^0(\mathbb{R}, H^{1 + s}(\Omega) \cap H_0^1(\Omega)) \ \cap \ \mathscr{C}^1(\mathbb{R}, H_0^s(\Omega))\]
of the linear damped wave equation 
\[\left \{
\begin{array}{rcccl}\label{KG_L_damped}\tag{$\ast \ast \ast$}
\square u + \gamma \partial_t u + \beta u & = & g & \quad & \text{in } \mathbb{R} \times \Omega, \\
(u(0), \partial_t u(0)) & = & \left( u^0, u^1 \right) & \quad & \text{in } \Omega, \\
u & = & 0 & \quad & \text{on } \mathbb{R} \times \partial \Omega.
\end{array}
\right.\]
In addition, for $T > 0$ and $s \in [0, 1]$, there exists $C_{s, T} > 0$ such that for all $\left( u^0, u^1 \right) \in X^s$, $\gamma \in L^\infty(\Omega, \mathbb{R}_+)$, and $g \in L_\mathrm{loc}^1(\mathbb{R}, H_0^s(\Omega))$, one has
\[\left\Vert u \right\Vert_{L^\infty((0, T), H^{1 + s} \cap H_0^1)} + \left\Vert \partial_t u \right\Vert_{L^\infty((0, T), H_0^s)} \leq C_{s, T} \left( \left\Vert \left( u^0, u^1 \right) \right\Vert_{X^s} + \left\Vert g \right\Vert_{L^1((0, T), H_0^s)} \right).\]
\end{thm}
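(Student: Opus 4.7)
The statement is the standard well-posedness theorem for a linear damped wave equation, so the plan is to use semigroup theory on $X^0$, propagate to $X^1$, interpolate for intermediate $s$, and handle the source via Duhamel. First, I would equip $X^0$ with the natural energy inner product $\langle(u,v),(\tilde u,\tilde v)\rangle = \int_\Omega(\nabla u\cdot\nabla\tilde u+\beta u\tilde u + v\tilde v)\,\mathrm{d}x$, which is equivalent to the $H_0^1\times L^2$ norm thanks to the assumption on $\beta$. A direct calculation gives $\langle A(u,v),(u,v)\rangle = -\int_\Omega\gamma v^2\,\mathrm{d}x\leq 0$, so $A$ is dissipative. For surjectivity of $\lambda I - A$ for some $\lambda>0$, I would reduce $(\lambda I - A)(u,v)=(f,g)$ to the elliptic equation $-\Delta u + (\lambda^2+\lambda\gamma+\beta)u = g+(\lambda+\gamma)f$ in $H_0^1$, which is uniquely solvable by Lax--Milgram since $\gamma\geq 0$ and $\lambda^2+\beta>0$ for $\lambda$ large. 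Lumer--Phillips then yields a contraction semigroup $(e^{tA})_{t\geq 0}$ on $X^0$. Running the analogous argument with $\gamma$ replaced by $-\gamma$ (now dissipative up to a shift of size $\|\gamma\|_{L^\infty}$) provides the backward evolution, giving a $C^0$ group on $X^0$ with exponential bounds on both half-lines, hence the estimate at $s=0$ for the homogeneous problem.

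Next, for $s=1$, I would use that $D(A)=X^1$ with the graph norm equivalent to the $X^1$ norm: indeed $(u,v)\in X^1$ iff $v\in H_0^1$ and $\Delta u\in L^2$ (as $\gamma v\in L^2$ automatically), so the classical result that $C^0$ semigroups restrict to $(D(A),\|\cdot\|_{D(A)})$ with bounds $\|e^{tA}z\|_{D(A)}\leq Me^{\omega t}\|z\|_{D(A)}$ gives the $X^1$ estimate. For $s\in(0,1)$, I would apply complex interpolation to the identity \eqref{eq_X^s_domains_Delta}, noting that $X^s$ is exactly $[X^0,X^1]_s$ (this is where the Fourier series representation of the norms is convenient), and conclude that the same bounds hold on $X^s$.

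With the homogeneous estimate in hand, the inhomogeneous problem is handled by Duhamel: $(u(t),\partial_t u(t)) = e^{tA}(u^0,u^1) + \int_0^t e^{(t-s)A}(0,g(s))\,\mathrm{d}s$ when $g\in L^1_{\mathrm{loc}}(\mathbb{R},H_0^s)$. Bounding the integral in $X^s$ by $C_{s,T}\|g\|_{L^1((0,T),H_0^s)}$ on a finite interval and combining with the homogeneous bound yields the full estimate stated in the theorem. For uniqueness, I would test the difference of two solutions against $\partial_t u$ in an approximation argument (or use directly the semigroup formulation): the resulting energy identity, together with Gronwall, forces the difference to vanish.

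\textbf{Main obstacle.} None of the steps is delicate in isolation, but the step I expect to require the most care is the identification of the interpolation spaces: one must check that $[X^0,X^1]_s$ really coincides with $(H^{1+s}\cap H_0^1)\times H_0^s$ and not merely with $D(\Delta^{(1+s)/2})\times D(\Delta^{s/2})$, and that this identification is compatible with the action of $A$ (in particular that multiplication by $\gamma\in L^\infty$ maps $H_0^s$ into itself for $s\in[0,1]$, which is clear for $s\in\{0,1\}$ and follows by interpolation for $s\in(0,1)$ once one is careful at $s=1$, where $H_0^1$-multiplication by a bounded function lands only in $L^2$; here the argument uses that $A$ is perturbatively of the form $A_0+B$ with $B$ bounded on $X^0$). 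Once this interpolation step is clean, the rest is routine.
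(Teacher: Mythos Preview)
The paper does not prove this theorem at all: it is introduced with the sentence ``The following result about the Cauchy problem corresponding to the linear version of (\ref{KG_nL_damped}) is well-known'' and no argument is supplied. So there is no proof in the paper to compare your proposal against.

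That said, your outline is the standard route one takes for such a statement (dissipativity and Lumer--Phillips on $X^0$, the group property by treating $-\gamma$, passage to $X^1$ via the graph norm of $A$, interpolation for intermediate $s$, and Duhamel for the source), and it is correct in spirit. Your identification of the main obstacle is also on point: the delicate step is indeed that multiplication by a merely bounded $\gamma$ need not preserve $H_0^s$ for $s>0$, so one cannot simply conjugate by $(-\Delta+\beta)^{s/2}$ and reuse the $s=0$ energy estimate. Your proposed fix---viewing $A=A_0+B$ with $B$ bounded on $X^0$ and transferring the $X^s$ mapping properties from the undamped group $e^{tA_0}$ (which does commute with the spectral projectors and hence acts on every $X^s$) via a bounded-perturbation or fixed-point/Gronwall argument---is the right way around this. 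One small caveat: as literally stated, the theorem asserts a constant $C_{s,T}$ uniform over \emph{all} $\gamma\in L^\infty(\Omega,\mathbb{R}_+)$; your graph-norm equivalence on $X^1$ and your perturbation argument will produce constants depending on $\|\gamma\|_{L^\infty}$, which is harmless for every use the paper makes of the theorem (where $\gamma$ is fixed) but is worth flagging if you aim to match the statement verbatim.
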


\begin{rem}\label{rem_constant_linear_estimate}
By the energy estimate, we can assume that $C_{0, T} \lesssim C_{0, 1}$ for $0 < T \leq 1$.
\end{rem}

To construct the solution of (\ref{KG_nL_damped}), one uses Strichartz estimates (see for example Theorem 2.1 of \cite{Joly-Laurent}).

\begin{thm}[Strichartz estimates]\label{thm_strichartz_estimates}
Let $T > 0$. There exists a constant $C > 0$ such that for all $(q, r)$ satisfying
\begin{equation}\label{admissible_exponents}
\frac{1}{q} + \frac{3}{r} = \frac{1}{2} \quad \text{and} \quad q \in \left[ \frac{7}{2}, +\infty \right]
\end{equation}
and for all $\left( u^0, u^1 \right) \in X^0$, $g \in L^1([0, T], L^2(\Omega))$, the unique solution $u$ of \textnormal{(\ref{KG_L_damped})} associated with $\left( u^0, u^1 \right)$ and $g$ satisfies
\[\Vert u \Vert_{L^q([0, T], L^r)} \leq C \left( \Vert u^0 \Vert_{H_0^1} + \Vert u^1 \Vert_{L^2} + \Vert g \Vert_{L^1([0, T], L^2)} \right).\]
\end{thm}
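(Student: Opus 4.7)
The plan is to treat the damping term as part of the source and reduce to Strichartz estimates for the undamped Klein-Gordon equation with Dirichlet boundary condition. Rewriting \ref{KG_L_damped} as
\[\square u + \beta u = g - \gamma \partial_t u,\]
I view $u$ as the solution of the inhomogeneous linear Klein-Gordon equation with data $(u^0, u^1)$ and source $\tilde g := g - \gamma \partial_t u$, and combine the linear estimate with a bound on $\tilde g$ coming from the energy identity.

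First, I would invoke the boundary-case Strichartz estimates for the wave operator on a three-dimensional compact Riemannian manifold with Dirichlet condition, due to Blair--Smith--Sogge when $\partial \Omega \neq \emptyset$ and to Kapitanski/Mockenhaupt--Seeger--Sogge type results otherwise. For admissible pairs $(q, r)$ with $\frac{1}{q} + \frac{3}{r} = \frac{1}{2}$ and $q \geq 7/2$ they yield
\[\|v\|_{L^q([0, T], L^r)} \lesssim \|v^0\|_{H_0^1} + \|v^1\|_{L^2} + \|h\|_{L^1([0, T], L^2)}\]
for any solution $v$ of $\square v + \beta v = h$ with Dirichlet condition; the lower-order term $\beta v$ is harmless, absorbed into the source and closed by Grönwall on short time intervals, then iterated. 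The threshold $q \geq 7/2$ reflects the derivative loss appearing in the parametrix near grazing rays at $\partial \Omega$, and is precisely the range obtained in \cite{Joly-Laurent}.

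Second, I would control $\tilde g$ using $\gamma \in L^\infty$ and Hölder in time:
\[\|\gamma \partial_t u\|_{L^1([0, T], L^2)} \leq T \|\gamma\|_{L^\infty} \|\partial_t u\|_{L^\infty([0, T], L^2)},\]
and then apply the energy estimate from Theorem \ref{thm_existence_linear_waves} (with $s = 0$) directly to the damped problem to get
\[\|\partial_t u\|_{L^\infty([0, T], L^2)} \leq C_{0, T} \bigl( \|(u^0, u^1)\|_{X^0} + \|g\|_{L^1([0, T], L^2)} \bigr).\]
Inserting this into the linear Strichartz bound applied to $\tilde g$ closes the argument, with a constant depending on $T$, $\beta$, $\|\gamma\|_{L^\infty}$, and the geometry of $\Omega$.

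The main obstacle is the external input from Blair--Smith--Sogge: the wave parametrix must be built in the presence of $\partial \Omega$ to handle reflected and grazing rays, and this is what both forces the restriction $q \geq 7/2$ and makes the result genuinely non-trivial. Once that ingredient is accepted as a black box, as the author does by citing \cite{Joly-Laurent}, the extension from the undamped to the damped case is essentially Duhamel plus the linear energy identity.
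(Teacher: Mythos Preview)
The paper does not actually prove this theorem: it is stated as a black box with the parenthetical ``see for example Theorem 2.1 of \cite{Joly-Laurent}'', and no argument is given. So there is no ``paper's own proof'' to compare against.

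Your sketch is nonetheless a correct account of how the result is obtained. The substantive content is the Strichartz estimate for the undamped equation $\square v + \beta v = h$ on a three-dimensional manifold with boundary, due to Blair--Smith--Sogge, which is exactly what forces the restriction $q \geq 7/2$; this is also the route taken in \cite{Joly-Laurent}. Your reduction of the damped case to the undamped one---move $\gamma\partial_t u$ to the right-hand side, bound it in $L^1_t L^2_x$ via $\|\gamma\|_{L^\infty}$ and the $L^\infty_t L^2_x$ energy control of $\partial_t u$ already provided by Theorem~\ref{thm_existence_linear_waves}---is valid and not circular, since that energy estimate is established independently of Strichartz. One minor point: rather than using the crude bound $\|\gamma\partial_t u\|_{L^1_t L^2_x} \leq T\|\gamma\|_{L^\infty}\|\partial_t u\|_{L^\infty_t L^2_x}$ and then iterating on short intervals, one can equally well absorb the damping directly into the semigroup $e^{tA}$ and apply Duhamel only to $g$, as the paper implicitly does throughout; either way the outcome is the same.
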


Using those estimates, the solution of the Cauchy problem (\ref{KG_nL_damped}) can be constructed locally (see \cite{GinibreVelo}). Unlike the defocusing case, it cannot be proven that the solution is global in time.

\begin{thm}\label{thm_existence_damped_waves}
Consider $\gamma \in L^\infty(\Omega, \mathbb{R}_+)$. For any (real-valued) initial data $\left( u^0, u^1 \right) \in X^0$, there exist a maximal time of existence $T \in (0, + \infty]$ and a unique solution $u$ of \textnormal{(\ref{KG_nL_damped})} in $\mathscr{C}^0([0, T), H_0^1(\Omega)) \cap \mathscr{C}^1([0, T), L^2(\Omega))$. If $T < + \infty$, then 
\begin{equation}\label{eq_thm_existence_wave_blow_up}
\left\Vert u(t) \right\Vert_{H_0^1} \xrightarrow{t \rightarrow T^-} + \infty.
\end{equation}
\end{thm}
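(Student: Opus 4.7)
The plan is a classical Picard iteration on the Duhamel form of the equation, followed by the standard continuation argument for the blow-up criterion. A function $u \in \mathscr{C}^0([0, T], H_0^1(\Omega)) \cap \mathscr{C}^1([0, T], L^2(\Omega))$ solves (\ref{KG_nL_damped}) on $[0, T]$ if and only if it is a fixed point of the map $\Phi$ sending $v$ to the (unique, by Theorem \ref{thm_existence_linear_waves}) solution of the linear damped equation (\ref{KG_L_damped}) with initial data $\left( u^0, u^1 \right)$ and source $g = v^3$.

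For local well-posedness, fix $R > \left\Vert \left( u^0, u^1 \right) \right\Vert_{X^0}$ and consider
\[E_{T, R} = \left\{ v \in \mathscr{C}^0([0, T], H_0^1) \cap \mathscr{C}^1([0, T], L^2) : \left\Vert v \right\Vert_{L^\infty H_0^1} + \left\Vert \partial_t v \right\Vert_{L^\infty L^2} \leq R \right\}\]
equipped with its natural norm. The Sobolev embedding $H_0^1(\Omega) \hookrightarrow L^6(\Omega)$ yields
\[\left\Vert v^3 \right\Vert_{L^1([0, T], L^2)} = \int_0^T \left\Vert v(t) \right\Vert_{L^6}^3 \mathrm{d}t \lesssim T R^3,\]
and the factorisation $v_1^3 - v_2^3 = (v_1 - v_2)(v_1^2 + v_1 v_2 + v_2^2)$ combined with Hölder's inequality in space with exponents $(6, 3)$ gives
\[\left\Vert v_1^3 - v_2^3 \right\Vert_{L^1([0, T], L^2)} \lesssim T R^2 \left\Vert v_1 - v_2 \right\Vert_{L^\infty H_0^1}.\]
Inserted into the linear estimate of Theorem \ref{thm_existence_linear_waves}, these bounds show that for $T > 0$ small enough (depending only on $R$), $\Phi$ stabilises $E_{T, R}$ and is a contraction there, hence admits a unique fixed point. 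Uniqueness in the full class follows from the same contraction estimate.

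Since the local existence time produced by this scheme depends only on $\left\Vert \left( u^0, u^1 \right) \right\Vert_{X^0}$, a standard gluing procedure yields a maximal solution on an interval $[0, T)$. For (\ref{eq_thm_existence_wave_blow_up}), assume $T < + \infty$ and, toward a contradiction, that $\liminf_{t \to T^-} \left\Vert u(t) \right\Vert_{H_0^1} \leq M < \infty$. Choose $t_n \to T^-$ with $\left\Vert u(t_n) \right\Vert_{H_0^1} \leq M$. The energy identity, the sign of $\gamma \geq 0$, and the embedding $H_0^1 \hookrightarrow L^4$ give
\[\tfrac{1}{2} \left\Vert \partial_t u(t_n) \right\Vert_{L^2}^2 \leq E\left( u^0, u^1 \right) - \tfrac{1}{2} \left\Vert u(t_n) \right\Vert_{H_0^1}^2 + \tfrac{1}{4} \left\Vert u(t_n) \right\Vert_{L^4}^4 \leq E\left( u^0, u^1 \right) + C M^4,\]
so $\left\Vert \left( u(t_n), \partial_t u(t_n) \right) \right\Vert_{X^0}$ is bounded independently of $n$. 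Restarting the local existence result from time $t_n$, for $n$ large enough to have $T - t_n$ smaller than the uniform local existence time associated with this bounded data, extends $u$ past $T$ and contradicts maximality.

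I do not expect any genuine obstacle: the cubic nonlinearity is subcritical at the $H_0^1$ level in dimension $3$, so the embedding $H_0^1 \hookrightarrow L^6$ alone gives a closed nonlinear estimate and the full strength of the Strichartz inequalities of Theorem \ref{thm_strichartz_estimates} is not required here (they become useful for propagation of higher regularity or uniqueness in larger classes). The only minor subtlety is that the local existence time must be controlled by the $X^0$-norm of the data alone, which is exactly what makes the dichotomy (\ref{eq_thm_existence_wave_blow_up}) work.
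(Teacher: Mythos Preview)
Your proof is correct and follows the same overall strategy as the paper: local well-posedness by a fixed-point argument, then the blow-up criterion via the energy identity. The paper is terser---it cites \cite{GinibreVelo} for local existence and only spells out the blow-up part, arguing that if $T<+\infty$ then the full $X^0$-norm diverges (standard Cauchy theory), whence by the nonincreasing energy $\Vert u(t)\Vert_{L^4}\to+\infty$, and finally $\Vert u(t)\Vert_{H_0^1}\to+\infty$ by Sobolev. Your restarting argument is the contrapositive of the same mechanism and is equally valid. One genuine (if minor) difference: the paper phrases local existence as relying on the Strichartz estimates of Theorem~\ref{thm_strichartz_estimates}, whereas you observe---correctly---that for the cubic nonlinearity in dimension~$3$ the embedding $H_0^1\hookrightarrow L^6$ alone closes the fixed-point scheme, so Strichartz is not needed at this stage.
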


\begin{proof}
We only prove (\ref{eq_thm_existence_wave_blow_up}). If $T < + \infty$, then the Cauchy theory gives 
\[\left\Vert u(t) \right\Vert_{H_0^1} + \left\Vert \partial_t u(t) \right\Vert_{L^2} \xrightarrow{t \rightarrow T^-} + \infty.\]
As the energy of a solution is nonincreasing, it implies 
\[\left\Vert u(t) \right\Vert_{L^4} \xrightarrow{t \rightarrow T^-} + \infty,\]
\[\left\Vert u(t) \right\Vert_{L^4} \xrightarrow{t \rightarrow T^-} + \infty,\]
yielding (\ref{eq_thm_existence_wave_blow_up}) by the Sobolev embedding $H^1(\Omega) \hookrightarrow L^4(\Omega)$.
\end{proof}

In what follows, we refer to $u$ as "the solution of \textnormal{(\ref{KG_nL_damped})} with initial data $\left( u^0, u^1 \right)$ (and damping $\gamma$)". Finally, we will need the fact that the GCC implies the stabilisation of the linear equation. More precisely, we will use the following result, which is due to Bardos, Lebeau and Rauch \cite{BLR}.

\begin{thm}\label{thm_stab_linear}
Assume that $\gamma \in L^\infty(\Omega, \mathbb{R}_+)$ satisfies $\gamma(x) \geq \alpha$ for almost all $x \in \omega$, with $\alpha > 0$ a constant and $\omega \subset \Omega$ an open set fulfilling the GCC. There exist $C > 0$ and $\lambda > 0$ such that for all $s \in [0, 1]$, and all $\left( u^0, u^1 \right) \in X^s$, if $u = e^{tA} \left( u^0, u^1 \right)$ is the solution of \textnormal{(\ref{KG_L_damped})} with $g = 0$, then
\[\left\Vert \left( u(t), \partial_t u(t) \right) \right\Vert_{X^s} \leq C e^{- \lambda t} \left\Vert \left( u^0, u^1 \right) \right\Vert_{X^s}, \quad t \geq 0.\]
\end{thm}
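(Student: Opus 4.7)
The plan is to prove the estimate at the endpoints $s=0$ and $s=1$ and then to interpolate, relying on the Dirichlet Laplacian characterisation (\ref{eq_X^s_domains_Delta}) of the spaces $X^s$.

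For $s=0$, I would invoke the classical observability inequality of Bardos--Lebeau--Rauch \cite{BLR}: under the GCC, there exist $T_0, C_{\mathrm{obs}} > 0$ such that every finite-energy solution $u$ of (\ref{KG_L_damped}) with $g = 0$ satisfies
\[ \left\Vert \left( u^0, u^1 \right) \right\Vert_{X^0}^2 \leq C_{\mathrm{obs}} \int_0^{T_0} \int_\omega \left\vert \partial_t u \right\vert^2 \mathrm{d}x \mathrm{d}t. \]
Combined with the linear energy identity and the lower bound $\gamma \geq \alpha$ on $\omega$, this yields
\[ \left\Vert \left( u(T_0), \partial_t u(T_0) \right) \right\Vert_{X^0}^2 \leq \kappa \left\Vert \left( u^0, u^1 \right) \right\Vert_{X^0}^2 \]
for some $\kappa \in (0, 1)$. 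Iterating on the intervals $[k T_0, (k+1) T_0]$ and using Theorem \ref{thm_existence_linear_waves} to control the norm between grid points then produces the exponential decay for $s = 0$.

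For $s = 1$, I would exploit that $\gamma$ is time-independent, so $v := \partial_t u$ again satisfies (\ref{KG_L_damped}) with $g = 0$. If $\left( u^0, u^1 \right) \in X^1$, its initial data read $(v(0), \partial_t v(0)) = (u^1, \Delta u^0 - \beta u^0 - \gamma u^1) \in X^0$, with
\[ \left\Vert (v(0), \partial_t v(0)) \right\Vert_{X^0} \lesssim \left\Vert \left( u^0, u^1 \right) \right\Vert_{X^1}, \]
the implicit constant depending only on $\beta$ and $\left\Vert \gamma \right\Vert_{L^\infty}$. By uniqueness in Theorem \ref{thm_existence_linear_waves}, the decay from the previous step applies to $(v, \partial_t v)$, giving exponential decay of $\left\Vert \partial_t u(t) \right\Vert_{H_0^1}$. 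Writing $\Delta u = \partial_t^2 u + \beta u + \gamma \partial_t u$ and using in addition the $X^0$ decay of $(u, \partial_t u)$ upgrades this into exponential decay of $\left\Vert u(t) \right\Vert_{H^2}$, hence decay in $X^1$.

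For $s \in (0, 1)$, the characterisation (\ref{eq_X^s_domains_Delta}) exhibits $X^s$ as a complex interpolation space between $X^0$ and $X^1$, so complex interpolation of the bounded operator $e^{tA}$ yields
\[ \left\Vert e^{tA} \right\Vert_{X^s \rightarrow X^s} \leq \left\Vert e^{tA} \right\Vert_{X^0 \rightarrow X^0}^{1-s} \left\Vert e^{tA} \right\Vert_{X^1 \rightarrow X^1}^{s}. \]
Choosing $\lambda$ to be the smaller of the two rates produced above (and $C$ the larger of the two constants) then gives a single pair $(C, \lambda)$ valid for all $s \in [0, 1]$. The main obstacle is the first step: the observability inequality rests on the deep microlocal propagation-of-singularities analysis at the boundary of \cite{BLR}, which is invoked here as a black box. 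The only additional technicality is the verification in the second step that $v = \partial_t u$ is indeed the $X^0$-solution of (\ref{KG_L_damped}) associated with the initial data displayed above, which follows from uniqueness in Theorem \ref{thm_existence_linear_waves} once one notes that the $X^1$ regularity of $u$ is enough to differentiate the equation in time in the sense of distributions.
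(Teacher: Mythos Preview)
Your outline is correct and follows a standard route. The paper, however, does not prove this theorem at all: it simply attributes the statement to Bardos--Lebeau--Rauch \cite{BLR} and uses it as a black box throughout. So there is no proof in the paper to compare against; you have supplied a correct sketch of the classical argument where the paper gives only a citation.

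One small remark on your $s=0$ step: the observability inequality in \cite{BLR} is usually formulated for the \emph{undamped} equation, so strictly speaking you should either cite their stabilisation result directly for $s=0$, or pass from observability for the undamped equation to observability for the damped one by the standard perturbation argument (treating $\gamma \partial_t u$ as a lower-order source term and absorbing it). This is routine and does not affect the validity of your approach. The $s=1$ step via time-differentiation and the interpolation step are both fine as written.
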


\subsection{Ground state and some related properties}

Consider $u \in H_0^1(\Omega)$, $u \neq 0$. For $\lambda \in \mathbb{R}$, set $j(\lambda) = J(\lambda u)$, that is
\[j(\lambda) = \frac{\lambda^2}{2} \left\Vert u \right\Vert^2_{H_0^1} - \frac{\lambda^4}{4} \left\Vert u \right\Vert^4_{L^4}.\]
Write $\lambda^\ast = \lambda^\ast(u) > 0$ for the positive argument of the maximum of $j$. Using $j^\prime(\lambda^\ast) = 0$, one finds $K\left( \lambda^\ast u \right) = 0$, where $K$ is defined by (\ref{eq_def_K}). Set
\begin{align}
d & = \inf\left\{ J\left( \lambda^\ast(u) u \right), u \in H_0^1(\Omega), u \neq 0 \right\} \nonumber\\
& = \inf\left\{ J(w), w \in H_0^1(\Omega), w \neq 0, K(w) = 0 \right\} \label{eq_def_potential_well}.
\end{align}
The fact that equality (\ref{eq_def_potential_well}) holds is clear, because for $w \in H_0^1(\Omega)$, $w \neq 0$, there exists a unique $\lambda > 0$ such that 
\[\lambda \left\Vert w \right\Vert^2_{H_0^1} - \lambda^3 \left\Vert w \right\Vert^4_{L^4} = 0\]
so that if $K(w) = 0$ then $\lambda^\ast(w) = 1$. Note that if $u \in H_0^1(\Omega)$ satisfies $K(u) < 0$, then $\lambda^\ast(u) \in (0, 1)$. Note also that one has
\[\left\vert \int_\Omega \left( w^4 - v^4 \right) \mathrm{d}x \right\vert \lesssim \left\Vert w - v \right\Vert_{L^2} \left( \left\Vert w \right\Vert_{L^6(\Omega)}^3 + \left\Vert v \right\Vert_{L^6(\Omega)}^3 \right)\]
so that $K$ and $J$ are continuous by the Sobolev embedding $H^1(\Omega) \hookrightarrow L^6(\Omega)$. The following result is due to Payne and Sattinger \cite{Payne-Sattinger}, in the case of a bounded subset of $\mathbb{R}^n$. The extension to the case of a compact manifold is straightforward, except for the uniqueness part.

\begin{thm}\label{thm_existence_uniqueness_Q}
One has $d > 0$, and there exists $w \in H_0^1(\Omega)$ such that $w \neq 0$, $K(w) = 0$, and $J(w) = d$. Such a function $w$ is a critical point of $J$, does not change sign, satisfies $w \in H^2(\Omega)$, and $-\Delta w + \beta w = w^3$. In addition, if $\Omega$ is a subset of $\mathbb{R}^3$, then $J$ has exactly two critical points: $w$ and $-w$.
\end{thm}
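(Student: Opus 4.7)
The plan is to apply the direct method of the calculus of variations to the infimum $d$ over the Nehari set $\{u \in H_0^1(\Omega) : u \neq 0,\ K(u) = 0\}$, exploiting the identity $J(u) = \tfrac{1}{4}\|u\|_{H_0^1}^2$ valid there. To start, $d > 0$: by the Sobolev embedding $H_0^1(\Omega) \hookrightarrow L^4(\Omega)$, on that set $\|u\|_{H_0^1}^2 = \|u\|_{L^4}^4 \lesssim \|u\|_{H_0^1}^4$, so $\|u\|_{H_0^1}^2 \geq c_0 > 0$, and hence $d \geq c_0/4 > 0$.

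For the existence of a minimizer $w$, I would take a minimizing sequence $(u_n)$ with $K(u_n) = 0$ and $J(u_n) \to d$; the identity above gives $\|u_n\|_{H_0^1}^2 \to 4d$, so up to extraction $u_n \rightharpoonup w$ weakly in $H_0^1$ and $u_n \to w$ strongly in $L^4$ by Rellich. Strong $L^4$ convergence yields $\|w\|_{L^4}^4 = \lim \|u_n\|_{L^4}^4 = 4d > 0$, so $w \neq 0$; weak lower semi-continuity then gives $\|w\|_{H_0^1}^2 \leq 4d$, hence $K(w) \leq 0$. If $K(w) < 0$, then $\lambda^\ast(w) \in (0,1)$ and, using $j'(\lambda^\ast) = 0$, one computes $J(\lambda^\ast w) = \tfrac{(\lambda^\ast)^2}{4}\|w\|_{H_0^1}^2 \leq (\lambda^\ast)^2 d < d$ while $K(\lambda^\ast w) = 0$, contradicting the definition of $d$. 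Hence $K(w) = 0$, and $J(w) = \tfrac{1}{4}\|w\|_{H_0^1}^2 \leq d \leq J(w)$ forces $J(w) = d$. The Lagrange multiplier theorem then yields $J'(w) = \mu K'(w)$ for some $\mu \in \mathbb{R}$; pairing with $w$, using $\langle J'(w), w\rangle = K(w) = 0$ together with $\langle K'(w), w\rangle = 2\|w\|_{H_0^1}^2 - 4\|w\|_{L^4}^4 = -2\|w\|_{L^4}^4 \neq 0$, one obtains $\mu = 0$, so $-\Delta w + \beta w = w^3$ weakly. Applying the same argument to $|w|$ (which shares the $H_0^1$- and $L^4$-norms of $w$) gives a nonnegative minimizer solving the same PDE; an elliptic bootstrap (using $H_0^1 \hookrightarrow L^6$, so $|w|^3 \in L^2$) puts $|w| \in H^2(\Omega) \cap H_0^1(\Omega)$, and the strong maximum principle applied to $-\Delta|w| + (\beta - w^2)|w| = 0$ forces $|w| > 0$ in $\Omega$. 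Therefore $w$ has constant sign, and $w \in H^2(\Omega)$ by the same bootstrap.

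The main obstacle is the final statement, that $w$ and $-w$ are the only critical points of $J$ realising the value $d$. Any nonzero critical point $u$ of $J$ satisfies $K(u) = 0$ (test the Euler--Lagrange equation against $u$), so if $J(u) = d$ it is itself a minimizer, hence of constant sign by the previous paragraph; the core difficulty is then uniqueness of the positive minimizer. My plan is to invoke a strict concavity argument of Picone / Benguria--Brezis--Lieb type, via the substitution $v = u^2$, showing that $v \mapsto J(\sqrt{v})$ is strictly concave along the path $v_\tau = (1-\tau) u_1^2 + \tau u_2^2$ connecting two positive minimizers $u_1, u_2$, which forces $u_1 = u_2$.
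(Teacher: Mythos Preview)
The paper does not prove this theorem: it is stated as a result of Payne--Sattinger and quoted without argument, so there is no proof in the paper to compare yours against.

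Your treatment of $d>0$, the existence of a minimizer on the Nehari set, the Lagrange-multiplier derivation of the Euler--Lagrange equation, the constant-sign property via $|w|$, and the $H^2$ bootstrap is correct and standard. One minor point: in the strong-maximum-principle step the coefficient $\beta - |w|^2$ may change sign, so once $|w|\in H^2(\Omega)\hookrightarrow L^\infty(\Omega)$ you should split off its negative part (writing $-\Delta|w| + c^+|w| = c^-|w|\ge 0$ with $c^\pm\in L^\infty$) before invoking Hopf.

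The genuine gap is the uniqueness argument. The Picone / Benguria--Brezis--Lieb / Brezis--Oswald device applies to \emph{sublinear} problems, in which $s\mapsto f(s)/s$ is decreasing; here $f(s)=s^3-\beta s$ gives $f(s)/s=s^2-\beta$, which is increasing, and the mechanism collapses. Concretely, with $v=u^2$ one has
\[
J(\sqrt{v})=\frac{1}{8}\int_\Omega\frac{|\nabla v|^2}{v}\,\mathrm{d}x+\frac{\beta}{2}\int_\Omega v\,\mathrm{d}x-\frac{1}{4}\int_\Omega v^{2}\,\mathrm{d}x,
\]
and the Dirichlet term is \emph{convex} in $v$ (it is the perspective of $p\mapsto|p|^2$), not concave; hence $v\mapsto J(\sqrt{v})$ is a sum of convex, linear and concave pieces and carries no definite concavity along your path $v_\tau$. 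Equivalently, running Picone's identity directly on two positive solutions $u_1,u_2$ of $-\Delta u+\beta u=u^3$ yields
\[
0\le\int_\Omega(u_1^2-u_2^2)\Bigl(\frac{-\Delta u_1}{u_1}-\frac{-\Delta u_2}{u_2}\Bigr)\mathrm{d}x
=\int_\Omega(u_1^2-u_2^2)^2\,\mathrm{d}x,
\]
which is trivially nonnegative and forces nothing. Uniqueness of the positive solution of $-\Delta w+\beta w=w^3$ on a general bounded domain, let alone a general compact Riemannian $3$-manifold as in the paper, does not follow from this soft convexity picture; where it is known (balls, via Gidas--Ni--Nirenberg symmetry plus ODE analysis, and a few other special geometries) the input is of a completely different nature. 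So the last clause of the theorem---which you reasonably read as uniqueness of the positive minimizer---requires an idea your proposal does not supply.
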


In what follows, we denote by $Q$ a nonnegative critical point of $J$ and refer to it as a ground state of (\ref{KG_nL}). To our knowledge, the uniqueness of $Q$ has not been proven in the literature in the case of a general compact Riemannian manifold $\Omega$. However, it will not be used anywhere in the present article.

We prove some technical results for later use. The first one states that a function $u$ such that $K(u) < 0$ is far from the zero function.

\begin{lem}\label{lem_lim_inf_J_u_n}
Let $(u_n)$ be a sequence of elements of $H_0^1(\Omega)$ such that $K(u_n) < 0$ for all $n \in \mathbb{N}$, and $K(u_n) \xrightarrow{n \rightarrow \infty} 0$. Then, $\liminf J(u_n) \geq d$.
\end{lem}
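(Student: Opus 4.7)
The plan is to leverage the algebraic identity
\[J(u) = \frac{1}{4}\left\Vert u \right\Vert_{H_0^1}^2 + \frac{1}{4} K(u),\]
which reduces the problem to showing that $\liminf \left\Vert u_n \right\Vert_{H_0^1}^2 \geq 4d$, given that $K(u_n) \to 0$.

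To establish this lower bound, I would use the scaling argument already set up before the lemma. Since $K(u_n) < 0$, the unique positive critical point $\lambda^\ast_n := \lambda^\ast(u_n)$ of $\lambda \mapsto J(\lambda u_n)$ belongs to $(0, 1)$; indeed, from $(\lambda^\ast_n)^2 \left\Vert u_n \right\Vert_{H_0^1}^2 = (\lambda^\ast_n)^4 \left\Vert u_n \right\Vert_{L^4}^4$ one reads off $(\lambda^\ast_n)^2 = \left\Vert u_n \right\Vert_{H_0^1}^2 / \left\Vert u_n \right\Vert_{L^4}^4 < 1$. The point is that by definition of $d$,
\[d \leq J(\lambda^\ast_n u_n),\]
and a direct calculation using $K(\lambda^\ast_n u_n) = 0$ collapses $J(\lambda^\ast_n u_n)$ to $\frac{(\lambda^\ast_n)^2}{4}\left\Vert u_n \right\Vert_{H_0^1}^2$. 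Combined with $(\lambda^\ast_n)^2 \leq 1$, this yields the uniform lower bound $\left\Vert u_n \right\Vert_{H_0^1}^2 \geq 4d$.

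Plugging back into the identity, $J(u_n) \geq d + \frac{1}{4} K(u_n)$, and letting $n \to \infty$ gives $\liminf J(u_n) \geq d$.

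There is no real obstacle here: the argument is essentially algebraic once the Nehari-type simplification $J|_{K = 0} = \tfrac{1}{4}\|\cdot\|_{H_0^1}^2$ is observed. The only thing to check, which is immediate, is that the projection $u_n \mapsto \lambda^\ast_n u_n$ onto $\{K = 0\}$ has $\lambda^\ast_n \leq 1$ in the regime $K(u_n) < 0$; note that the proof does not require any compactness nor that $\lambda^\ast_n \to 1$, even though this happens to be true under our assumption $K(u_n) \to 0$.
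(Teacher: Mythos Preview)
Your proof is correct and is in fact shorter and more elementary than the paper's. Both arguments start from the identity $J(u_n)=\tfrac14\|u_n\|_{H_0^1}^2+\tfrac14 K(u_n)$, but then diverge. The paper extracts a subsequence converging weakly in $H_0^1(\Omega)$ and strongly in $L^4(\Omega)$ (using the compact embedding $H_0^1\hookrightarrow L^4$ on the bounded manifold), shows the weak limit $u$ is nonzero with $K(u)\le 0$, and only then applies the Nehari scaling to the limit $u$, splitting into the cases $K(u)=0$ and $K(u)<0$. You instead apply the scaling directly to each $u_n$: from $K(u_n)<0$ you get $\lambda_n^\ast\in(0,1)$, and $d\le J(\lambda_n^\ast u_n)=\tfrac{(\lambda_n^\ast)^2}{4}\|u_n\|_{H_0^1}^2\le \tfrac14\|u_n\|_{H_0^1}^2$, which yields the pointwise bound $J(u_n)\ge d+\tfrac14 K(u_n)$ for every $n$. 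This avoids all compactness considerations and actually gives a quantitative inequality valid for any single $u$ with $K(u)<0$, not merely an asymptotic statement; the paper's route, by contrast, would generalize more readily to settings where one wants to identify the limiting profile.
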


\begin{proof}
For $n \in \mathbb{N}$, one has $J(u_n) = \frac{1}{4} K(u_n) + \frac{1}{4} \left\Vert u_n \right\Vert_{H_0^1}^2$, implying
\[\liminf J(u_n) = \frac{1}{4} \liminf \left\Vert u_n \right\Vert_{H_0^1}^2.\]
If $\liminf J(u_n) = + \infty$, the result holds. Otherwise, up to a subsequence, one can assume that $\left( J(u_n) \right)_n$ and $\left( \left\Vert u_n \right\Vert_{H_0^1} \right)_n$ converge. In particular, $(u_n)$ is bounded and we can assume that (up to a subsequence) it converges weakly in $H_0^1(\Omega)$ and strongly in $L^4(\Omega)$. Denote by $u \in H_0^1(\Omega) \cap L^4(\Omega)$ the limit. For $n \in \mathbb{N}$, using the Sobolev embedding and the fact that $K(u_n) \leq 0$, one obtains
\[\left\Vert u_n \right\Vert_{L^4}^4 \lesssim \left\Vert u_n \right\Vert_{H_0^1}^4 \leq \left\Vert u_n \right\Vert_{L^4}^8.\]
As $u_n \neq 0$, this gives $\Vert u \Vert_{L^4} \gtrsim 1$, and $u \neq 0$ as a result. One also has 
\[\Vert u \Vert_{H_0^1}^2 \leq \lim \left\Vert u_n \right\Vert_{H_0^1}^2 \leq \Vert u \Vert_{L^4}^4,\]
yielding $K(u) \leq 0$. We split the end of the proof into two cases.

First, assume that $K(u) = 0$. Then $\Vert u \Vert_{H_0^1} = \lim \left\Vert u_n \right\Vert_{H_0^1}$, and hence, $(u_n)_n$ converges to $u$ strongly in $H_0^1(\Omega)$. In particular, by definition of $d$, one has
\[\lim J(u_n) = J(u) \geq d\]
so the proof is complete in that case.

Second, assume that $K(u) < 0$, that is, 
\[\Vert u \Vert_{H_0^1} < \lim \left\Vert u_n \right\Vert_{H_0^1}.\]
As above, there exists $\lambda^\ast \in (0, 1)$ such that $K\left(\lambda^\ast u \right) = 0$, and $d \leq J \left(\lambda^\ast u \right)$. The fact that $K\left(\lambda^\ast u \right) = 0$ gives
\[J \left(\lambda^\ast u \right) = \frac{1}{4} \left\Vert \lambda^\ast u \right\Vert_{L^4}^4.\]
Writing $J(u_n) = \frac{1}{2} K(u_n) + \frac{1}{4} \left\Vert u_n \right\Vert_{L^4}^4$, one finds $\lim J \left( u_n \right) = \frac{1}{4} \left\Vert u \right\Vert_{L^4}^4$. This gives
\[d \leq J \left(\lambda^\ast u \right) = \left(\lambda^\ast\right)^4 \lim J \left( u_n \right) < \lim J \left( u_n \right),\]
implying the result in the second case.
\end{proof}

The second result will allow us to improve inequalities of the form $K(u) \geq 0$ or $K(u) < 0$ for functions such that $J(u)$ is strictly below $d$.

\begin{lem}\label{lem_improving_K(u)_ineq}
Fix $\delta > 0$. There exists a constant $c > 0$ such that for all $u \in H_0^1(\Omega)$ with $J(u) \leq d - \delta$, one has:
\begin{enumerate}[label=(\roman*)]
\item if $K(u) \geq 0$ then $K(u) \geq c \Vert u \Vert_{H_0^1}^2$.
\item if $K(u) < 0$ then $K(u) \leq - c \Vert u \Vert_{H_0^1}^2$ and $K(u) \leq - c$.
\end{enumerate}
\end{lem}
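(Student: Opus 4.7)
I would argue each inequality by contradiction, splitting into the regime where $\Vert u_n \Vert_{H_0^1}$ is small (where the Sobolev embedding $H_0^1(\Omega) \hookrightarrow L^4(\Omega)$ alone settles matters) and the regime where it stays bounded away from zero (where a compactness argument modeled on the proof of Lemma \ref{lem_lim_inf_J_u_n} is needed). Two algebraic identities will be used repeatedly: $4J(u) = K(u) + \Vert u \Vert_{H_0^1}^2$ and, by Sobolev, $\Vert u \Vert_{L^4}^4 \leq C \Vert u \Vert_{H_0^1}^4$.

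For $(i)$, suppose no such $c > 0$ exists: pick a sequence $(u_n)$ of nonzero functions with $J(u_n) \leq d - \delta$, $K(u_n) \geq 0$, and $K(u_n) / \Vert u_n \Vert_{H_0^1}^2 \to 0$. The Sobolev inequality gives
\[K(u_n) \geq \Vert u_n \Vert_{H_0^1}^2 \bigl( 1 - C \Vert u_n \Vert_{H_0^1}^2 \bigr),\]
so $\Vert u_n \Vert_{H_0^1} \to 0$ would force $K(u_n)/\Vert u_n \Vert_{H_0^1}^2 \to 1$; up to subsequence I may therefore assume $\Vert u_n \Vert_{H_0^1} \to \ell > 0$. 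The identity $4J(u_n) = K(u_n) + \Vert u_n \Vert_{H_0^1}^2$ together with $K(u_n) \geq 0$ also bounds $(u_n)$ above in $H_0^1$, so after a further extraction $u_n \rightharpoonup u$ in $H_0^1$ and $u_n \to u$ in $L^4$; then $K(u_n) \to 0$ yields $\Vert u \Vert_{L^4}^4 = \ell^2 > 0$, hence $u \neq 0$ and $K(u) \leq 0$ by weak lower semi-continuity. If $K(u) = 0$, the convergence is strong in $H_0^1$ and (\ref{eq_def_potential_well}) gives $\lim J(u_n) = J(u) \geq d$; if $K(u) < 0$, the scaling $\lambda^* = \lambda^*(u) \in (0, 1)$ yields
\[d \leq J(\lambda^* u) = \frac{(\lambda^*)^4}{4} \Vert u \Vert_{L^4}^4 < \frac{1}{4} \Vert u \Vert_{L^4}^4 = \lim J(u_n).\]
Both alternatives contradict $J(u_n) \leq d - \delta$.

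For $(ii)$, suppose the first inequality fails and take $(u_n)$ with $K(u_n) < 0$, $J(u_n) \leq d - \delta$, and $K(u_n)/\Vert u_n \Vert_{H_0^1}^2 \to 0^-$. Writing $K(u_n) = -\varepsilon_n \Vert u_n \Vert_{H_0^1}^2$ with $\varepsilon_n \to 0^+$, one computes $J(u_n) = \tfrac{1}{4}(1 - \varepsilon_n)\Vert u_n \Vert_{H_0^1}^2$, so an unbounded $(u_n)$ would contradict $J(u_n) \leq d - \delta$. Hence $(u_n)$ is bounded, $K(u_n) \to 0$, and Lemma \ref{lem_lim_inf_J_u_n} gives $\liminf J(u_n) \geq d$, again a contradiction. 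The companion bound $K(u) \leq -c$ then follows for free: $K(u) < 0$ combined with Sobolev gives $\Vert u \Vert_{H_0^1}^2 \geq 1/C$, so $K(u) \leq -c \Vert u \Vert_{H_0^1}^2 \leq -c/C$. The main obstacle I anticipate is the case $K(u) < 0$ in $(i)$: it is the only place where the Payne–Sattinger scaling is really needed to upgrade the weak inequality $K(u) \leq 0$ coming from semi-continuity into a quantitative energy gap; everything else is Sobolev plus Rellich compactness.
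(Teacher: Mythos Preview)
Your argument is correct. The approach, however, is genuinely different from the paper's. You argue by contradiction and compactness, essentially rerunning the proof of Lemma \ref{lem_lim_inf_J_u_n} for part (i) and invoking that lemma directly for part (ii); the paper instead gives a direct proof built on the sharp Sobolev-type inequality $\Vert u \Vert_{L^4} \Vert Q \Vert_{L^4} \leq \Vert u \Vert_{H_0^1}$, which follows from the variational characterisation of $Q$, and then analyses the scalar function $\alpha(x) = x^2/2 - x^4/(4\Vert Q\Vert_{L^4}^4)$ to locate $\Vert u \Vert_{H_0^1}$ on either side of $\Vert Q \Vert_{L^4}^2$. The paper even anticipates your route in its remark: a contradiction proof works but yields no information on the size of $c$. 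What the paper's argument buys is explicit constants, e.g.\ $c = \sqrt{\delta/d}$ in (i) and $K(u) \leq -4\delta - 4\sqrt{d\delta}$ in (ii); what your argument buys is economy, since you recycle Lemma \ref{lem_lim_inf_J_u_n} rather than introducing the sharp embedding and the auxiliary function $\alpha$. One minor expository point: in (i) your sentence ``up to subsequence I may therefore assume $\Vert u_n\Vert_{H_0^1} \to \ell > 0$'' tacitly uses that the Sobolev lower bound rules out \emph{every} subsequence tending to zero, not just the full sequence; this is clear once said, but worth making explicit.
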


\begin{rem}
The proof can be carried out by contradiction, but it does not provide any information regarding the size of the constant. Our proof gives the following explicit estimates (which will be of no use in the rest of the article): if $u$ is such that $J(u) \leq d - \delta$ then $K(u) \geq 0$ implies $K(u) \geq \sqrt{\frac{\delta}{d}} \Vert u \Vert_{H_0^1}^2$, and $K(u) < 0$ implies 
\[K(u) \leq - 4 \delta - 4 \sqrt{d \delta} \quad \text{ and } \quad K(u) \leq - \frac{\delta + \sqrt{d \delta}}{d + \sqrt{d \delta}} \Vert u \Vert_{H_0^1}^2.\]
\end{rem}

\begin{proof}
We split the proof into 3 steps.

\paragraph{Step 1: an explicit Sobolev embedding.} We prove that for $u \in H_0^1(\Omega)$, one has 
\begin{equation}\label{eq_proof_lem_improving_K_ineq_1}
\Vert u \Vert_{L^4} \Vert Q \Vert_{L^4} \leq \Vert u \Vert_{H_0^1}.
\end{equation}
Fix $u \in H_0^1(\Omega), u \neq 0$. As above, write $j(\lambda) = J(\lambda u)$ for $\lambda \in \mathbb{R}$. Then 
\begin{equation}\label{eq_proof_lem_improving_K_ineq_2}
\lambda^\ast = \frac{\Vert u \Vert_{H_0^1}}{\Vert u \Vert_{L^4}^2}
\end{equation}
is the positive argument of the maximum of $j$, and one has
\begin{equation}\label{eq_proof_lem_improving_K_ineq_3}
J\left( \lambda^\ast u \right) \geq J(Q) = d.
\end{equation}
Note that $K\left( \lambda^\ast u \right) = K(Q) = 0$ implies 
\[J\left( \lambda^\ast u \right) = \frac{\left\Vert \lambda^\ast u \right\Vert_{L^4}^4}{4} \quad \text{ and } \quad J(Q) = \frac{\Vert Q \Vert_{L^4}^4}{4}.\]
Together with (\ref{eq_proof_lem_improving_K_ineq_2}) and (\ref{eq_proof_lem_improving_K_ineq_3}), this gives (\ref{eq_proof_lem_improving_K_ineq_1}).

\paragraph{Step 2: estimation of $\Vert u \Vert_{H_0^1}$.} Here, we show that there exist $x^+$ and $x^-$ such that $0 < x^- < \Vert Q \Vert_{H_0^1}^2 < x^+$, satisfying the following property: if $u \in H_0^1(\Omega)$ is such that $J(u) \leq d - \delta$, then $K(u) \geq 0$ implies $\Vert u \Vert_{H_0^1} \leq x^-$ and $K(u) < 0$ implies $\Vert u \Vert_{H_0^1} \geq x^+$.

Fix $u \in H_0^1(\Omega)$. For $x \geq 0$, we put 
\[\alpha(x) = \frac{x^2}{2} - \frac{x^4}{4 \Vert Q \Vert_{L^4}^4}\]
so that (\ref{eq_proof_lem_improving_K_ineq_1}) gives $\alpha \left( \Vert u \Vert_{H_0^1} \right) \leq J(u) \leq d - \delta$. The graph of $\alpha$ is given in the following figure. Note that the maximum of $\alpha$ is $d$, and is reached at $x = \Vert Q \Vert_{L^4}^2$.

\begin{center}
\begin{tikzpicture}[xscale=4,yscale=7]
\draw[>=stealth,->] (-0.1, 0) -- (1.7, 0);
\draw[>=stealth,->] (0, -0.28) -- (0, 0.32);
\draw[scale=1, domain=0:1.55, smooth, variable=\x] plot ({\x}, {\x*\x/2 - \x*\x*\x*\x/4});

\pgfmathsetmacro{\deltaa}{0.1}
\draw[dashed] (0, 0.25 - \deltaa) -- (1.272, 0.25 - \deltaa);
\draw[dashed] (0, 0.25) -- (1, 0.25);
\draw[dashed] (1, 0) -- (1, 0.25);
\draw[dashed] (0.612, 0) -- (0.612, 0.25 - \deltaa);
\draw[dashed] (1.272, 0) -- (1.272, 0.25 - \deltaa);

\draw (1, 0.25) node {$\bullet$};
\draw (0.612, 0.25 - \deltaa) node {$\bullet$};
\draw (1.272, 0.25 - \deltaa) node {$\bullet$};

\draw (0, 0.25) node[left] {$d$};
\draw (0, 0.25 - \deltaa) node[left] {$d - \delta$};
\draw (1, 0) node[below] {$\Vert Q \Vert_{L^4}^2$};
\draw (1.272, 0) node[below] {$x^+$};
\draw (0.612, 0) node[below] {$x^-$};
\end{tikzpicture}
\end{center}

Note that there exists a unique $x^+ > \Vert Q \Vert_{L^4}^2$ such that $\alpha(x^+) = d - \delta$, and if $\delta \leq d$ then there exists a unique $x^- < \Vert Q \Vert_{L^4}^2$ such that $\alpha(x^-) = d - \delta$. Explicitly, one has
\[x^\pm = 2 \sqrt{d \pm \sqrt{d \delta}}.\]

First, assume that $K(u) \geq 0$. One has 
\[\frac{\Vert u \Vert_{H_0^1}^2}{4} \leq \frac{\Vert u \Vert_{H_0^1}^2}{4} + \frac{K(u)}{4} = J(u) \leq d - \delta,\]
implying $\Vert u \Vert_{H_0^1}^2 \leq 4 d = \Vert Q \Vert_{L^4}^4$. As $\alpha\left( \Vert u \Vert_{H_0^1} \right) \leq d - \delta$, this gives $\Vert u \Vert_{H_0^1} \leq x^-$.

Second, assume that $K(u) < 0$. Then, using (\ref{eq_proof_lem_improving_K_ineq_1}), one obtains
\[\Vert u \Vert_{H_0^1}^2 < \Vert u \Vert_{L^4}^4 \leq \left( \frac{\Vert u \Vert_{H_0^1}}{\Vert Q \Vert_{L^4}} \right)^4,\]
yielding $\Vert u \Vert_{H_0^1} \geq \Vert Q \Vert_{L^4}^2$. As $\alpha\left( \Vert u \Vert_{H_0^1} \right) \leq d - \delta$, this gives $\Vert u \Vert_{H_0^1} \geq x^+$.

\paragraph{Step 3: end of the proof.} First, assume that $K(u) \geq 0$. Then using (\ref{eq_proof_lem_improving_K_ineq_1}) and step 2, we obtain \textit{(i)} as follows
\[K(u) \geq \Vert u \Vert_{H_0^1}^2 - \frac{\Vert u \Vert_{H_0^1}^4}{\Vert Q \Vert_{L^4}^4} \geq \Vert u \Vert_{H_0^1}^2 \left( 1 - \frac{(x^-)^2}{\Vert Q \Vert_{L^4}^4} \right) \geq \sqrt{\frac{\delta}{d}} \Vert u \Vert_{H_0^1}^2.\]

Second, assume that $K(u) < 0$. Using step 2 and $J(u) \leq d - \delta$, one obtains
\[K(u) = 4J(u) - \Vert u \Vert_{H_0^1}^2 \leq 4(d - \delta) - (x^+)^2 = - 4 \delta - 4 \sqrt{d \delta}.\]
That gives the second inequality of \textit{(ii)}. For the first inequality of \textit{(ii)}, it suffices to find a constant $C > 0$ such that 
\[K(u) \leq 4(d - \delta) - \Vert u \Vert_{H_0^1}^2 \leq - C \Vert u \Vert_{H_0^1}^2,\]
using $\Vert u \Vert_{H_0^1} \geq x^+$. It holds if one chooses
\[C \leq 1 - \frac{4(d - \delta)}{(x^+)^2} = \frac{\delta + \sqrt{d \delta}}{d + \sqrt{d \delta}}.\]
The proof is complete.
\end{proof}

\section{The damped equation below the energy of the ground state}

Here, we prove Theorem \ref{thm_dichotomy_damped_introduction}. To begin with, we check that the spaces $\mathscr{K}^+$ and $\mathscr{K}^-$ are invariant under the flow of the damped equation.

\begin{lem}\label{lem_K+-_inv_flow_damped}
Suppose $\left( u^0, u^1 \right) \in \mathscr{K}^\pm$, $\gamma \in L^\infty(\Omega, \mathbb{R}_+)$, and $u$ is the solution of \textnormal{(\ref{KG_nL_damped})}. If $t \geq 0$ is such that $u$ exists on $[0, t]$, then $\left( u(t), \partial_t u(t) \right) \in \mathscr{K}^\pm$.
\end{lem}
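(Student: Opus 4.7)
The plan is to propagate separately the two conditions defining $\mathscr{K}^\pm$. The energy bound is immediate from the damped energy identity, which gives $E(u(t), \partial_t u(t)) \leq E(u^0, u^1) < d$ throughout the interval of existence. Propagating the sign of $K(u(\cdot))$ uses continuity of this scalar function (which follows from $u \in \mathscr{C}^0(H_0^1)$ combined with the continuity of $K$ on $H_0^1$ observed in Section 1.2) together with a first-exit-time contradiction in each case.

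For $\mathscr{K}^+$, suppose for contradiction that some $t_0 > 0$ has $K(u(t_0)) < 0$, and set
\[
t_1 = \inf\{t \in [0, t_0] : K(u(t)) < 0\}.
\]
By continuity $K(u(t_1)) = 0$ and there is a sequence $t_n \downarrow t_1$ with $K(u(t_n)) < 0$. Since $J(u(t_1)) \leq E(u(t_1), \partial_t u(t_1)) < d$, the variational characterization $d = \inf\{J(w) : w \in H_0^1 \setminus \{0\},\ K(w) = 0\}$ forces $u(t_1) = 0$. Then $u(t_n) \to 0$ in $H_0^1$, and the Sobolev embedding $H_0^1 \hookrightarrow L^4$ yields
\[
K(v) \geq \|v\|_{H_0^1}^2 \bigl(1 - C\|v\|_{H_0^1}^2\bigr) \geq 0 \quad \text{for } \|v\|_{H_0^1} \text{ small enough},
\]
so $K(u(t_n)) \geq 0$ for $n$ large, contradicting $K(u(t_n)) < 0$.

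For $\mathscr{K}^-$, the argument is symmetric but relies on Lemma \ref{lem_lim_inf_J_u_n}. Assuming some $t_0 > 0$ has $K(u(t_0)) \geq 0$, set $t_1 = \inf\{t \in (0, t_0] : K(u(t)) \geq 0\}$. Then $K(u(t)) < 0$ on $[0, t_1)$ while $K(u(t_1)) = 0$ by continuity; picking $t_n \uparrow t_1$ produces a sequence with $K(u(t_n)) < 0$ and $K(u(t_n)) \to 0$, so Lemma \ref{lem_lim_inf_J_u_n} gives $\liminf J(u(t_n)) \geq d$, contradicting $J(u(t_n)) \leq E(u(t_n), \partial_t u(t_n)) < d$.

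The main obstacle is the $\mathscr{K}^+$ case: a priori $u(t_1)$ could be zero, in which case the variational definition of $d$ is silent (it only rules out $K = 0$ away from the origin), so the small-data Sobolev estimate is needed to close the argument. The $\mathscr{K}^-$ case is cleaner because Lemma \ref{lem_lim_inf_J_u_n} was designed precisely to handle the limiting configuration $K \to 0^-$.
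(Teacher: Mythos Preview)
Your proof is correct. The $\mathscr{K}^-$ case matches the paper's argument exactly. For $\mathscr{K}^+$, however, the paper takes a more uniform route: having produced (by the same first-exit-time reasoning) a sequence $t_n$ with $K(u(t_n)) < 0$ and $K(u(t_n)) \to 0$, it applies Lemma~\ref{lem_lim_inf_J_u_n} directly to obtain $\liminf J(u(t_n)) \geq d$, which already contradicts $J(u(t_n)) \leq E(u(t_n),\partial_t u(t_n)) < d$. In other words, the paper uses the \emph{same} lemma for both signs. Your alternative for $\mathscr{K}^+$---forcing $u(t_1)=0$ via the variational characterization of $d$ and then ruling out $K<0$ near the origin by the Sobolev inequality---is perfectly valid and arguably more elementary (it avoids invoking Lemma~\ref{lem_lim_inf_J_u_n}, whose proof uses Rellich compactness). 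The trade-off is that your argument splits into two genuinely different mechanisms for the two cases, whereas the paper's is symmetric; note in particular that your sequence $t_n \downarrow t_1$ already satisfies the hypotheses of Lemma~\ref{lem_lim_inf_J_u_n}, so you could have shortened the $\mathscr{K}^+$ case by invoking it there as well.
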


\begin{proof}
Assume that $\left( u^0, u^1 \right) \in \mathscr{K}^+$, and that there exists $0 < t_0 \leq t$ such that $\left( u(t_0), \partial_t u(t_0) \right) \notin \mathscr{K}^+$. As $E\left(u(t_0), \partial_t u(t_0) \right) \leq E\left( u^0, u^1 \right) < d$, there exists a sequence $(t_n)_n$ such that $0 < t_n < t$, $K(u(t_n)) < 0$, and $K(u(t_n)) \xrightarrow{n \rightarrow \infty} 0$. Lemma \ref{lem_lim_inf_J_u_n} gives $\liminf J(u(t_n)) \geq d$, a contradiction with
\[J( u(t_n) ) \leq E\left(u(t_n), \partial_t u(t_n) \right) \leq E\left( u^0, u^1 \right) < d, \quad n \in \mathbb{N}.\]

The proof for $\left( u^0, u^1 \right) \in \mathscr{K}^-$ is the same.
\end{proof}

\subsection{Global solutions}

The following result is straightforward, as in the case of the undamped equation.

\begin{thm}
Suppose $\left( u^0, u^1 \right) \in \mathscr{K}^+$, $\gamma \in L^\infty(\Omega, \mathbb{R}_+)$, and $u$ is the solution of \textnormal{(\ref{KG_nL_damped})}. Then $u$ is defined on $\mathbb{R}_+$. 
\end{thm}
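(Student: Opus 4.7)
The plan is to combine the invariance of $\mathscr{K}^+$ under the forward flow (Lemma \ref{lem_K+-_inv_flow_damped}) with the blow-up criterion of Theorem \ref{thm_existence_damped_waves}. Let $T \in (0, +\infty]$ denote the maximal time of existence of $u$. For any $t \in [0, T)$, Lemma \ref{lem_K+-_inv_flow_damped} gives $(u(t), \partial_t u(t)) \in \mathscr{K}^+$, so in particular $K(u(t)) \geq 0$ and $E(u(t), \partial_t u(t)) < d$.

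The key algebraic identity is to rewrite the energy as
\[E(u(t), \partial_t u(t)) = \tfrac{1}{4}K(u(t)) + \tfrac{1}{4}\|u(t)\|_{H_0^1}^2 + \tfrac{1}{2}\|\partial_t u(t)\|_{L^2}^2,\]
which follows directly from the definitions of $J$, $E$, and $K$. Combined with $K(u(t)) \geq 0$ and the fact that the energy is nonincreasing along the damped flow, this yields
\[\tfrac{1}{4}\|u(t)\|_{H_0^1}^2 \leq E(u(t), \partial_t u(t)) \leq E(u^0, u^1) < d,\]
so $\|u(t)\|_{H_0^1}$ remains bounded by $2\sqrt{E(u^0, u^1)}$ on $[0, T)$.

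To conclude, I invoke the contrapositive of the blow-up criterion (\ref{eq_thm_existence_wave_blow_up}) in Theorem \ref{thm_existence_damped_waves}: since $\|u(t)\|_{H_0^1}$ does not tend to $+\infty$ as $t \to T^-$, one must have $T = +\infty$. There is no real obstacle here; the argument is a direct transposition of the Payne--Sattinger argument for (\ref{KG_nL}) to the damped case, made possible by the fact that the damping only helps (the energy decreases rather than being merely conserved), so the same a priori bound on $\|u(t)\|_{H_0^1}$ carries through verbatim.
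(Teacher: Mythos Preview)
Your proof is correct and follows essentially the same approach as the paper: both use Lemma \ref{lem_K+-_inv_flow_damped} to get $K(u(t))\geq 0$, rewrite the energy as $\tfrac14 K(u(t)) + \tfrac14\|u(t)\|_{H_0^1}^2 + \tfrac12\|\partial_t u(t)\|_{L^2}^2$, and combine this with the nonincreasing energy to obtain an a priori bound precluding blow-up via Theorem \ref{thm_existence_damped_waves}.
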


\begin{proof} 
Using Lemma \ref{lem_K+-_inv_flow_damped} and the fact that the energy is nonincreasing, one obtains
\begin{align*}
E\left( u^0, u^1 \right) \geq E\left( u(t), \partial_t u(t) \right) & = J(u(t)) + \frac{1}{2} \left\Vert \partial_t u(t) \right\Vert_{L^2}^2 \\
& = \frac{1}{4} K(u(t)) + \frac{1}{4} \left\Vert u(t) \right\Vert_{H_0^1}^2 + \frac{1}{2} \left\Vert \partial_t u(t) \right\Vert_{L^2}^2 \\
& \geq \frac{1}{4} \left\Vert u(t) \right\Vert_{H_0^1}^2 + \frac{1}{2} \left\Vert \partial_t u(t) \right\Vert_{L^2}^2.
\end{align*}
Hence, there is no blow-up, and $u$ is defined on $\mathbb{R}_+$ by the Cauchy theory (see Theorem \ref{thm_existence_damped_waves}). 
\end{proof}

The previous proof also gives the following lemma. 

\begin{lem}[Equivalence between the energy and the square of the $H_0^1 \times L^2$ norm for solutions in $\mathscr{K}^+$]\label{lem_energy_eq_K+}
For all $\left( u^0, u^1 \right) \in \mathscr{K}^+$ and $\gamma \in L^\infty(\Omega, \mathbb{R}_+)$, one has
\[2 E\left( u(t), \partial_t u(t) \right) \leq \left\Vert u(t) \right\Vert_{H_0^1}^2 + \left\Vert \partial_t u(t) \right\Vert_{L^2}^2 \leq 4 E\left( u(t), \partial_t u(t) \right), \quad t \geq 0,\]
where $u$ is the solution of \textnormal{(\ref{KG_nL_damped})} with initial data $\left( u^0, u^1 \right)$.
\end{lem}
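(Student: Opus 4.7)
The lemma is essentially a restatement of the computation carried out in the proof of the previous theorem, and there is no real obstacle; the only thing to do is package both inequalities cleanly using the stability of $\mathscr{K}^+$ under the flow.

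First I would establish the algebraic identity
\[J(u) = \frac{1}{4} \left\Vert u \right\Vert_{H_0^1}^2 + \frac{1}{4} K(u),\]
which is obtained by writing $\frac{1}{2} \left\Vert u \right\Vert_{H_0^1}^2 = \frac{1}{4} \left\Vert u \right\Vert_{H_0^1}^2 + \frac{1}{4} \left\Vert u \right\Vert_{H_0^1}^2$ and using the definitions of $J$ and $K$. Consequently,
\[E\left( u(t), \partial_t u(t) \right) = \frac{1}{4} \left\Vert u(t) \right\Vert_{H_0^1}^2 + \frac{1}{4} K(u(t)) + \frac{1}{2} \left\Vert \partial_t u(t) \right\Vert_{L^2}^2.\]

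For the upper bound on the $H_0^1 \times L^2$ norm, I would invoke Lemma \ref{lem_K+-_inv_flow_damped}: since $\left( u^0, u^1 \right) \in \mathscr{K}^+$ and $u$ is defined on $\mathbb{R}_+$ by the previous theorem, $\left( u(t), \partial_t u(t) \right) \in \mathscr{K}^+$ for all $t \geq 0$, hence $K(u(t)) \geq 0$. Dropping this nonnegative term in the identity above yields
\[E\left( u(t), \partial_t u(t) \right) \geq \frac{1}{4} \left\Vert u(t) \right\Vert_{H_0^1}^2 + \frac{1}{2} \left\Vert \partial_t u(t) \right\Vert_{L^2}^2 \geq \frac{1}{4} \left( \left\Vert u(t) \right\Vert_{H_0^1}^2 + \left\Vert \partial_t u(t) \right\Vert_{L^2}^2 \right),\]
which is the right-hand inequality.

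For the lower bound, no sign condition on $K$ is needed: unfolding the definition of $E$ gives
\[2 E\left( u(t), \partial_t u(t) \right) = \left\Vert u(t) \right\Vert_{H_0^1}^2 - \frac{1}{2} \left\Vert u(t) \right\Vert_{L^4}^4 + \left\Vert \partial_t u(t) \right\Vert_{L^2}^2 \leq \left\Vert u(t) \right\Vert_{H_0^1}^2 + \left\Vert \partial_t u(t) \right\Vert_{L^2}^2,\]
since $\left\Vert u(t) \right\Vert_{L^4}^4 \geq 0$. Combining the two inequalities concludes the proof.
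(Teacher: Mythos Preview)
Your proposal is correct and matches the paper's own argument essentially verbatim: the paper simply remarks that ``the previous proof also gives the following lemma,'' where that proof writes $E = \frac{1}{4}K(u(t)) + \frac{1}{4}\Vert u(t)\Vert_{H_0^1}^2 + \frac{1}{2}\Vert \partial_t u(t)\Vert_{L^2}^2$ and drops $K(u(t))\geq 0$ via Lemma~\ref{lem_K+-_inv_flow_damped}, exactly as you do, while the lower bound follows trivially from $\Vert u(t)\Vert_{L^4}^4 \geq 0$.
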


We will use this to prove the stabilisation of a solution starting from $\mathscr{K}^+$. one has the following source-to-solution continuity result.

\begin{lem}\label{lem_continuity_source_to_sol}
Fix $T > 0$, $C_0 > 0$, and $\gamma \in L^\infty(\Omega, \mathbb{R}_+)$. There exists a constant $C > 0$ such that for all $\left( u^0, u^1 \right), \left( v^0, v^1 \right) \in H_0^1(\Omega) \times L^2(\Omega)$, if the solutions $u$ and $v$ of \textnormal{(\ref{KG_nL_damped})} with initial data $\left( u^0, u^1 \right)$ and $\left( v^0, v^1 \right)$ are defined on $[0, T]$ and satisfy
\begin{equation}\label{eq_lem_continuity_source_sol_hyp}
\sup_{t \in [0, T]} \left( \left\Vert \left( u(t), \partial_t u(t) \right) \right\Vert_{H_0^1 \times L^2} + \left\Vert \left( v(t), \partial_t v(t) \right) \right\Vert_{H_0^1 \times L^2} \right) \leq C_0,
\end{equation}
then one has
\[\sup_{t \in [0, T]} \left\Vert \left( u(t), \partial_t u(t) \right) - \left( v(t), \partial_t v(t) \right) \right\Vert_{H_0^1 \times L^2} \leq C \left\Vert \left( u(t_0), \partial_t u(t_0) \right) - \left( v(t_0), \partial_t v(t_0) \right) \right\Vert_{H_0^1 \times L^2}\]
for $t_0 \in \left\{0, \frac{T}{2}, T \right\}$.
\end{lem}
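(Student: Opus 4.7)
Set $w = u - v$ and $E_w(t) = \tfrac{1}{2}\left\Vert w(t) \right\Vert_{H_0^1}^2 + \tfrac{1}{2}\left\Vert \partial_t w(t) \right\Vert_{L^2}^2$. The plan is to derive a differential inequality for $E_w$ that controls $E_w(t)$ in terms of $E_w(t_0)$ both forward and backward in time, and to close the estimate with Gronwall. Subtracting the two equations, $w$ satisfies
\[\square w + \gamma \partial_t w + \beta w = u^3 - v^3 = (u^2 + uv + v^2) w \quad \text{in } [0, T] \times \Omega,\]
with Dirichlet boundary condition. Multiplying by $\partial_t w$ and integrating over $\Omega$ (which is justified by a density/regularisation argument as in Theorem \ref{thm_existence_linear_waves}) gives
\[\frac{\mathrm{d}}{\mathrm{d}t} E_w(t) = - \int_\Omega \gamma \left( \partial_t w \right)^2 \mathrm{d}x + \int_\Omega \left( u^2 + uv + v^2 \right) w \, \partial_t w \, \mathrm{d}x.\]

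The key estimate is on the nonlinear term. By Hölder's inequality with exponents $(3, 6, \infty)$ or rather $(6, 6, 6, 2)$ for the product with $\partial_t w$, and the Sobolev embedding $H_0^1(\Omega) \hookrightarrow L^6(\Omega)$, I would write
\[\left\Vert \left( u^2 + uv + v^2 \right) w \right\Vert_{L^2} \lesssim \left( \left\Vert u \right\Vert_{L^6}^2 + \left\Vert v \right\Vert_{L^6}^2 \right) \left\Vert w \right\Vert_{L^6} \lesssim C_0^2 \left\Vert w \right\Vert_{H_0^1},\]
using the uniform bound (\ref{eq_lem_continuity_source_sol_hyp}). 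Combined with $\lvert \gamma \rvert \leq \left\Vert \gamma \right\Vert_{L^\infty}$ and Cauchy--Schwarz, this yields
\[\left\vert \frac{\mathrm{d}}{\mathrm{d}t} E_w(t) \right\vert \leq \left\Vert \gamma \right\Vert_{L^\infty} \left\Vert \partial_t w \right\Vert_{L^2}^2 + C \, C_0^2 \left\Vert w \right\Vert_{H_0^1} \left\Vert \partial_t w \right\Vert_{L^2} \leq K E_w(t),\]
for some constant $K$ depending only on $\left\Vert \gamma \right\Vert_{L^\infty}$ and $C_0$.

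From this two-sided inequality, the functions $t \mapsto e^{-Kt} E_w(t)$ is nonincreasing and $t \mapsto e^{Kt} E_w(t)$ is nondecreasing. Consequently, for all $t, t_0 \in [0, T]$,
\[E_w(t) \leq e^{K \vert t - t_0 \vert} E_w(t_0) \leq e^{K T} E_w(t_0).\]
Since $\left\Vert \left( w(t), \partial_t w(t) \right) \right\Vert_{H_0^1 \times L^2}^2 = 2 E_w(t)$, taking the supremum over $t \in [0, T]$ gives the lemma with $C = e^{KT/2}$, and the conclusion holds for any choice of $t_0 \in [0, T]$, in particular for $t_0 \in \{0, T/2, T\}$.

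There is no serious obstacle here: the only mild subtlety is that the backward estimate relies on absorbing the damping term $-\int_\Omega \gamma (\partial_t w)^2$ as an upper bound in absolute value, which costs the extra $\left\Vert \gamma \right\Vert_{L^\infty}$ factor in $K$ but is otherwise harmless. The Sobolev embedding step is the only place where the cubic nonlinearity is really used, and it fits within the subcritical framework underlying the Strichartz theory of Theorem \ref{thm_strichartz_estimates}.
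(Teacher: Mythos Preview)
Your argument is correct. Both your proof and the paper's rest on the same core estimate: writing $w=u-v$ as a solution of the linear damped equation with source $u^3-v^3=(u^2+uv+v^2)w$, and bounding this source in $L^2$ by $C_0^2\|w\|_{H_0^1}$ via H\"older with exponents $(6,6,6)$ and the Sobolev embedding $H_0^1\hookrightarrow L^6$. The only difference is in how the estimate is closed. The paper invokes the linear energy estimate of Theorem~\ref{thm_existence_linear_waves} on a short interval, picks up a factor of $T$ on the source term, absorbs it for $T$ small, and then iterates to reach arbitrary $T$; the cases $t_0=0$ and $t_0=T$ are handled separately. You instead derive the two-sided differential inequality $\lvert E_w'\rvert\le K E_w$ and apply Gronwall, which is slightly more direct and automatically yields the conclusion for every $t_0\in[0,T]$ at once. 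The two routes are equivalent in strength and produce constants of the same form $e^{KT}$; yours is marginally more self-contained, while the paper's fits its pattern of treating the nonlinear equation as a perturbation of the linear one.
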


\begin{proof}
It suffices to prove the result for $t_0 = 0$ and for $t_0 = T$. In both cases, we can assume that $T$ is arbitrary small: the result for large $T$ follows by iteration. We assume that $t_0 = 0$, the other case is similar.

Set $w = u - v$, solution of 
\[\left \{
\begin{array}{rcccl}
\square w + \gamma \partial_t w + \beta w & = & u^3 - v^3 & \quad & \text{in } \mathbb{R}_+ \times \Omega, \\
(w(0), \partial_t w(0)) & = & \left( u^0, u^1 \right) - \left( v^0, v^1 \right) & \quad & \text{in } \Omega, \\
w & = & 0 & \quad & \text{on } \mathbb{R}_+ \times \partial \Omega.
\end{array}
\right.\]
Assume that $T < 1$. Then, by Theorem \ref{thm_existence_linear_waves} and Remark \ref{rem_constant_linear_estimate}, there exists a constant independent of $T$, $\left( u^0, u^1 \right)$ and $\left( v^0, v^1 \right)$ such that
\[\sup_{t \in [0, T]} \left( \left\Vert w(t) \right\Vert_{H_0^1} + \left\Vert \partial_t w(t) \right\Vert_{L^2} \right) \lesssim \left\Vert \left( u^0, u^1 \right) - \left( v^0, v^1 \right) \right\Vert_{H_0^1 \times L^2} + \left\Vert u^3 - v^3 \right\Vert_{L^1((0, T), L^2)}.\]
Hölder's inequality gives
\[\left\Vert u^3 - v^3 \right\Vert_{L^1((0, T), L^2)} \lesssim T \left\Vert u - v \right\Vert_{L^\infty((0, T), L^6)} \left( \left\Vert u \right\Vert_{L^\infty((0, T), L^6)}^2 + \left\Vert v \right\Vert_{L^\infty((0, T), L^6)}^2 \right).\]
Using (\ref{eq_lem_continuity_source_sol_hyp}) and the Sobolev embedding $H^1(\Omega) \hookrightarrow L^6(\Omega)$, one finds
\begin{align*}
\left\Vert u^3 - v^3 \right\Vert_{L^1((0, T), L^2)} & \lesssim T \left\Vert w \right\Vert_{L^\infty((0, T), H_0^1)} \left( \left\Vert u \right\Vert_{L^\infty((0, T), H_0^1)}^2 + \left\Vert v \right\Vert_{L^\infty((0, T), H_0^1)}^2 \right) \\
& \lesssim T \left\Vert w \right\Vert_{L^\infty((0, T), H_0^1)}. 
\end{align*}
Hence, for $T$ sufficiently small, one obtains
\[\sup_{t \in [0, T]} \left( \left\Vert w(t) \right\Vert_{H_0^1} + \left\Vert \partial_t w(t) \right\Vert_{L^2} \right) \lesssim \left\Vert \left( u^0, u^1 \right) - \left( v^0, v^1 \right) \right\Vert_{H_0^1 \times L^2}\]
and this completes the proof.
\end{proof}

\subsection{Blow-up solutions}

Here, we prove that a solution initiated in $\mathscr{K}^-$ blows up in finite time.

\begin{thm}
For $\gamma \in L^\infty(\Omega, \mathbb{R}_+)$, if $u$ is the solution of \textnormal{(\ref{KG_nL_damped})} with initial data $\left( u^0, u^1 \right) \in \mathscr{K}^-$, then the maximal time of existence of $u$ is finite.
\end{thm}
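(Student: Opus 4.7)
The plan is to adapt the classical Payne-Sattinger-Levine blow-up argument to the damped equation. By Lemma~\ref{lem_K+-_inv_flow_damped}, $K(u(t)) < 0$ is preserved along the flow, and Lemma~\ref{lem_improving_K(u)_ineq} applied with $\delta = d - E(u^0, u^1) > 0$ yields the uniform bounds $\|u(t)\|_{H_0^1}^2 \geq 4(d + \sqrt{d\delta})$ and $-K(u(t)) \geq 4(\delta + \sqrt{d\delta})$ throughout the maximal interval; in particular $\|u(t)\|_{L^4}^4 \geq \|u(t)\|_{H_0^1}^2$. I would introduce the Levine-type functional
\[F(t) = \|u(t)\|_{L^2}^2 + \int_0^t \|\sqrt{\gamma}\,u(s)\|_{L^2}^2\,ds,\]
where the integral term is included precisely so that the damping cross-terms cancel in the second derivative: a direct computation using the equation gives $F''(t) = 2\|\partial_t u(t)\|_{L^2}^2 - 2K(u(t))$, which is bounded below by a positive constant; hence both $F(t)$ and $F'(t)$ tend to $+\infty$.

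Applying Cauchy--Schwarz in the product Hilbert space $L^2(\Omega) \oplus L^2((0,t), L^2(\Omega))$ to the identity
\[F'(t) - C_0 = 2\Bigl[\bigl(u(t), \partial_t u(t)\bigr)_{L^2} + \int_0^t \bigl(\sqrt{\gamma}\,u, \sqrt{\gamma}\,\partial_s u\bigr)_{L^2}\,ds\Bigr],\]
with $C_0 = \|\sqrt{\gamma}\,u^0\|_{L^2}^2$, gives $(F'(t) - C_0)^2 \leq 4F(t)\tilde G(t)$, where $\tilde G(t) = \|\partial_t u\|_{L^2}^2 + \int_0^t \|\sqrt{\gamma}\,\partial_s u\|_{L^2}^2\,ds$. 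Using the energy dissipation identity $\int_0^t \|\sqrt{\gamma}\,\partial_s u\|_{L^2}^2\,ds = E(u^0, u^1) - E(u(t), \partial_t u(t))$ and the algebraic relation $-2K(u) + 4J(u) = \|u\|_{L^4}^4$, one checks the pleasant identity
\[F''(t) - 4\tilde G(t) = \|u(t)\|_{L^4}^4 - 4E(u^0, u^1) \geq 4(\sqrt{d\delta} + \delta) =: \eta > 0.\]
Combining these two ingredients yields the key differential inequality
\[F(t)F''(t) \geq (F'(t) - C_0)^2 + \eta F(t).\]

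Since $F'(t), F(t) \to +\infty$, for $t$ sufficiently large we have $(F'(t) - C_0)^2 \geq (1-\kappa)(F'(t))^2$ for any small $\kappa > 0$, and the term $\eta F(t)$ absorbs the residues from a Young-type inequality as well as any lower-order terms; this upgrades the previous display to $F(t)F''(t) \geq (1+\alpha)(F'(t))^2$ for some $\alpha > 0$ and all $t \geq t_1$ large enough. Consequently $(F^{-\alpha})''(t) \leq 0$ on $[t_1, \infty)$, and since $F'(t_1) > 0$ we have $(F^{-\alpha})'(t_1) < 0$: being positive, concave, and strictly decreasing, $F^{-\alpha}$ must vanish in finite time, forcing $F(t) \to +\infty$ at some $t_\ast < \infty$. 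Since $\int_0^t \|\sqrt{\gamma}\,u\|_{L^2}^2\,ds \leq \|\gamma\|_{L^\infty} t \sup_{[0,t]}\|u\|_{L^2}^2$, this forces $\|u(t)\|_{L^2} \to +\infty$ and, by the Poincar\'e inequality, $\|u(t)\|_{H_0^1} \to +\infty$ as $t \to t_\ast^-$; by Theorem~\ref{thm_existence_damped_waves}, the maximal time of existence is at most $t_\ast$, hence finite. The main technical obstacle is the residual constant $C_0$ from the Cauchy--Schwarz step, which prevents applying Levine's concavity argument directly; its resolution relies on combining the uniform positivity of the gap $\eta$ with the unbounded growth of $F$, so that for $t$ large the $C_0$ correction becomes negligible.
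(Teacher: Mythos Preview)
Your approach via the Levine functional $F(t) = \|u(t)\|_{L^2}^2 + \int_0^t \|\sqrt{\gamma}\,u\|_{L^2}^2\,ds$ is genuinely different from the paper's: the paper works with $M(t)=\|u(t)\|_{L^2}^2$ alone, keeps the damping cross-term $-2\int_\Omega \gamma u\,\partial_t u$ in $M''$, bounds it by Young's inequality, and then splits into two cases according to whether the energy remains bounded or drifts to $-\infty$. Your choice of $F$ is more elegant in that the cross-terms cancel exactly in $F''$, and your identity $F''-4\tilde G=\|u\|_{L^4}^4-4E(u^0,u^1)$ is correct and pleasant.

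There is, however, a real gap in the step where you pass from $FF''\geq (F'-C_0)^2+\eta F$ to $FF''\geq (1+\alpha)(F')^2$. Expanding the square, this would require $\eta F - 2C_0 F' + C_0^2 \geq \alpha (F')^2$ for large $t$, i.e.\ a bound of the form $(F')^2 \lesssim F$. Nothing you have established gives that: the inequality $F''\geq \eta$ yields only \emph{lower} bounds on $F$ and $F'$, and a priori $F'$ can grow much faster than $\sqrt{F}$ (for $F(t)=e^{ct}$ one has $(F')^2/F=c^2F\to+\infty$). The phrase ``$\eta F$ absorbs the residues'' hides precisely this missing control.

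The fix is immediate with your own ingredients. Instead of stopping at the gap $\eta$, substitute $\|u\|_{L^4}^4=2\|u\|_{H_0^1}^2+2\|\partial_t u\|_{L^2}^2-4E(u(t),\partial_t u(t))$ and the energy dissipation identity to obtain
\[
F'' - 6\tilde G \;=\; 2\|u(t)\|_{H_0^1}^2 - 8E(u^0,u^1) + 2\int_0^t \|\sqrt{\gamma}\,\partial_s u\|_{L^2}^2\,ds \;\geq\; 0,
\]
the last inequality by your bound $\|u(t)\|_{H_0^1}^2\geq 4(d+\sqrt{d\delta})>4E(u^0,u^1)$. Hence $FF''\geq 6F\tilde G\geq \tfrac{3}{2}(F'-C_0)^2$, and now the constant $C_0$ is harmless: since $F'\to+\infty$, eventually $(F'-C_0)^2\geq \tfrac{8}{9}(F')^2$, giving $FF''\geq \tfrac{4}{3}(F')^2$. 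The concavity of $F^{-1/3}$ then concludes exactly as you wrote.
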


\begin{proof}
Take $\left( u^0, u^1 \right) \in \mathscr{K}^-$ and assume by contradiction that $u(t)$ exists for all $t \geq 0$. As in the original proof of Payne and Sattinger \cite{Payne-Sattinger}, set $M(t) = \left\Vert u(t) \right\Vert_{L^2}^2$. We prove that $M \in \mathscr{C}^2(\mathbb{R}_+, \mathbb{R})$, with
\begin{equation}\label{eq_proof_blow_up_damped_reg_1}
M^{\prime}(t) = 2 \int_\Omega u(t) \partial_t u(t) \mathrm{d}x.
\end{equation}
and
\begin{equation}\label{eq_proof_blow_up_damped_reg_2}
M^{\prime \prime}(t) = - 2 \left\Vert u(t) \right\Vert_{H_0^1}^2 + 2 \left\Vert u(t) \right\Vert_{L^4}^4 + 2 \left\Vert \partial_t u(t) \right\Vert_{L^2}^2 - 2 \int_\Omega \gamma u(t) \partial_t u(t) \mathrm{d}x
\end{equation}
As $u \in \mathscr{C}^0(\mathbb{R}_+, H_0^1(\Omega)) \cap \mathscr{C}^1(\mathbb{R}_+, L^2(\Omega))$, one finds $M \in \mathscr{C}^1(\mathbb{R}_+, \mathbb{R})$, and (\ref{eq_proof_blow_up_damped_reg_1}) holds true. We show that (\ref{eq_proof_blow_up_damped_reg_2}) holds true in $\mathcal{D}^\prime((0, + \infty), \mathbb{R})$. Take $\phi \in \mathscr{C}_\mathrm{c}^\infty((0, + \infty), \mathbb{R})$, and write
\[\int_0^{+ \infty} M^\prime \phi^\prime \mathrm{d}t = 2 \int_0^{+ \infty} \int_\Omega \left( \partial_t u \partial_t \left( u \phi \right) - \left(\partial_t u \right)^2 \phi \right)\mathrm{d}x \mathrm{d}t.\]
One has $u \phi \in H_0^1((0, + \infty) \times \Omega)$. Consider a sequence $\left( u_n \right)_n$ of elements of $\mathscr{C}_\mathrm{c}^\infty((0, + \infty) \times \Omega, \mathbb{R})$ that converges to $u \phi$ in $H^1((0, + \infty) \times \Omega)$. For $n \in \mathbb{N}$, using the equation satisfied by $u$, one obtains 
\[\int_0^{+ \infty} \int_\Omega \partial_t u \partial_t u_n \mathrm{d}x \mathrm{d}t = \int_0^{+ \infty} \int_\Omega \left( \nabla u \nabla u_n + \gamma \partial_t u u_n + \beta u u_n - u^3 u_n \right)\mathrm{d}x \mathrm{d}t.\]
In the limit $n \rightarrow + \infty$, one obtains
\[\int_0^{+ \infty} \int_\Omega \partial_t u \partial_t \left( u \phi \right) \mathrm{d}x \mathrm{d}t = \int_0^{+ \infty} \int_\Omega \left( \nabla u \nabla \left(u \phi \right) + \gamma u \partial_t u \phi + \beta u^2 \phi - u^4 \phi \right)\mathrm{d}x \mathrm{d}t\]
and as $\phi$ is independent of $x$, this gives
\[\int_0^{+ \infty} M^\prime \phi^\prime \mathrm{d}t = 2 \int_0^{+ \infty} \int_\Omega \left( \left\vert \nabla u \right\vert^2 + \gamma u \partial_t u + \beta u^2 - u^4 - \left(\partial_t u \right)^2 \right) \phi \mathrm{d}x \mathrm{d}t.\]
Hence, the weak derivative of $M^\prime$ exists and is given by (\ref{eq_proof_blow_up_damped_reg_2}). Note that the right-hand side of (\ref{eq_proof_blow_up_damped_reg_2}) depends continuously on $t \geq 0$, so that in fact, $M \in \mathscr{C}^2(\mathbb{R}_+, \mathbb{R})$, and (\ref{eq_proof_blow_up_damped_reg_2}) holds true in a strong sense.

One has 
\[M^{\prime \prime}(t) = - 2 K(u(t)) + 2 \left\Vert \partial_t u(t) \right\Vert_{L^2}^2 - 2 \int_\Omega \gamma u(t) \partial_t u(t) \mathrm{d}x.\]
Write $E(t) = E\left( u(t), \partial_t u(t) \right)$ for the energy. Recall that the energy equality is 
\[E^\prime(t) = - \int_\Omega \gamma \left\vert \partial_t u(t)\right\vert^2 \mathrm{d}x.\]
As the energy is nonincreasing, either it is bounded or it tends to $- \infty$ as $t$ tends to infinity. We treat these two cases separately.

\paragraph{Case 1: bounded energy.} For all $\epsilon > 0$, as $\gamma \in L^\infty(\mathbb{R}_+)$, one has
\[\left\vert 2 \int_\Omega \gamma u(t) \partial_t u(t) \mathrm{d}x \right\vert \lesssim \epsilon \left\Vert u(t) \right\Vert_{H_0^1}^2 + \frac{1}{\epsilon} \int_\Omega \gamma \left\vert \partial_t u(t) \right\vert^2 \mathrm{d}x = \epsilon \left\Vert u(t) \right\Vert_{H_0^1}^2 - \frac{1}{\epsilon} E^\prime(t).\]
This gives 
\begin{equation}\label{eq_proof_thm_blow_up_damped}
M^{\prime \prime}(t) \geq - 2 K(u(t)) - C_1 \epsilon \left\Vert u(t) \right\Vert_{H_0^1}^2 + 2 \left\Vert \partial_t u(t) \right\Vert_{L^2}^2 + \frac{C_1}{\epsilon} E^\prime(t)
\end{equation}
for some $C_1 > 0$. Inequality (\ref{eq_proof_thm_blow_up_damped}) has two consequences.

First, we use Lemma \ref{lem_improving_K(u)_ineq} \emph{(ii)}: for $t \geq 0$, one has
\[J(u(t)) \leq E\left( u(t), \partial_t u(t) \right) \leq E\left( u^0, u^1 \right) < d\]
so there exists $C_2 > 0$ such that
\begin{equation}\label{eq_proof_thm_blow_up_damped_2}
K\left(u (t) \right) \leq - C_2 \left\Vert u(t) \right\Vert_{H_0^1}^2 \quad \text{ and } \quad K\left( u (t) \right) \leq - C_2, \quad t \geq 0.
\end{equation}
Using this in (\ref{eq_proof_thm_blow_up_damped}) and taking $\epsilon$ sufficiently small, one obtains
\[M^{\prime \prime}(t) - \frac{C_1}{\epsilon} E^\prime(t) \gtrsim \left\Vert u(t) \right\Vert_{H_0^1}^2 + \left\Vert \partial_t u(t) \right\Vert_{L^2}^2 \geq \left\Vert u(t) \right\Vert_{H_0^1}^2.\]
Note that by (\ref{eq_proof_thm_blow_up_damped_2}), one has $\left\Vert u(t) \right\Vert_{H_0^1} \gtrsim 1$ for $t \geq 0$. Hence, one has $M^{\prime \prime}(t) - \frac{C_1}{\epsilon} E^\prime(t) \gtrsim 1$, implying
\[M^\prime(t) - \frac{C_1}{\epsilon} E(t) - M^\prime(0) + \frac{C_1}{\epsilon} E(0) \gtrsim t\]
for all $t \geq 0$. As the energy is bounded, this gives 
\[M^\prime(t) \xrightarrow{t \rightarrow + \infty} + \infty \quad \text{ and } \quad M(t) \xrightarrow{t \rightarrow + \infty} + \infty.\]

Second, we use the energy equality. By definition, one has
\[K(u(t)) = 4 E(t) - 2 \left\Vert \partial_t u(t) \right\Vert_{L^2}^2 - \left\Vert u(t) \right\Vert_{H_0^1}^2\]
for all $t \geq 0$. Using this in (\ref{eq_proof_thm_blow_up_damped}), one obtains
\[M^{\prime \prime}(t) - \frac{C_1}{\epsilon} E^\prime(t) \geq 6 \left\Vert \partial_t u(t) \right\Vert_{L^2}^2 + (2 - C_1 \epsilon) \left\Vert u(t) \right\Vert_{H_0^1}^2 - 8 E(t).\]
Set $C = \frac{C_1}{\epsilon}$. As the energy is bounded and as $M(t)$ tends to infinity, for $\epsilon$ chosen sufficiently small and $t$ sufficiently large, one obtains
\[M^{\prime \prime}(t) - C E^\prime(t) \geq 6 \left\Vert \partial_t u(t) \right\Vert_{L^2}^2.\]
Together with the Cauchy-Schwarz inequality, this gives
\[\left\vert M^\prime(t) \right\vert^2 \leq 4 \left( \int_\Omega \left\vert u(t) \right\vert^2 \mathrm{d}x \right) \left( \int_\Omega \left\vert \partial_t u(t) \right\vert^2 \mathrm{d}x \right) \leq \frac{2}{3} M(t) \left( M^{\prime \prime}(t) - C E^\prime(t) \right), \quad t \geq 0 \text{ large}.\]

Dividing by $M(t) M^\prime(t)$, and using $M^\prime(t) \rightarrow + \infty$ and $E^\prime(t) \leq 0$, one writes
\[\frac{M^\prime(t)}{M(t)} \leq \frac{2}{3} \left( \frac{M^{\prime \prime}(t)}{M^\prime(t)} - C \frac{E^\prime(t)}{M^\prime(t)} \right) \leq \frac{2}{3} \left( \frac{M^{\prime \prime}(t)}{M^\prime(t)} - C E^\prime(t) \right)\]
for $t$ large. Fix $T > 0$ such that the previous inequality holds for all $t \geq T$. Integrating, one obtains
\[\ln\left(M(t)\right) - \ln\left( M(T) \right) \leq \frac{2}{3} \left( \ln\left(M^\prime(t)\right) - \ln\left(M^\prime(T)\right) \right) - \frac{2C}{3} \left( E(t) - E(T) \right), \quad t \geq T.\]
As the energy is bounded and $M(t) \rightarrow + \infty$, this gives $\ln\left(M(t)\right) \leq \frac{3}{4} \ln\left(M^\prime(t)\right)$ for $t$ large, yielding
\[\frac{M^\prime(t)}{M(t)^{\frac{4}{3}}} \geq 1, \quad t \geq T^\prime, \quad T^\prime \text{ large}.\]
Integrating between $t_1$ and $t_2$ with $T^\prime \leq t_1 \leq t_2$, one finds
\[- \frac{3}{M(t_2)^{\frac{1}{3}}} + \frac{3}{M(t_1)^{\frac{1}{3}}} \geq t_2 - t_1.\]
Letting $t_2$ tend to infinity gives a contradiction.

\paragraph{Case 2: the energy tends to $- \infty$.} In that case, by definition of the energy, one has 
\[\left\Vert u(t) \right\Vert_{L^4} \xrightarrow{t \rightarrow + \infty} + \infty.\]
For $\epsilon > 0$, as $\gamma \in L^\infty(\mathbb{R}_+)$ and as $\Omega$ is bounded, one has
\[\left\vert 2 \int_\Omega \gamma u(t) \partial_t u(t) \mathrm{d}x \right\vert \lesssim \frac{1}{\epsilon} \left\Vert u(t) \right\Vert_{L^2}^2 + \epsilon \left\Vert \partial_t u(t) \right\Vert_{L^2}^2 \lesssim \frac{1}{\epsilon} \left\Vert u(t) \right\Vert_{L^4}^2 + \epsilon \left\Vert \partial_t u(t) \right\Vert_{L^2}^2,\]
and together with (\ref{eq_proof_blow_up_damped_reg_2}), this gives 
\[M^{\prime \prime}(t) \geq - 2 \left\Vert u(t) \right\Vert_{H_0^1}^2 + (2 - C \epsilon) \left\Vert \partial_t u(t) \right\Vert_{L^2}^2 + 2 \left\Vert u(t) \right\Vert_{L^4}^4 - \frac{C}{\epsilon} \left\Vert u(t) \right\Vert_{L^4}^2\]
for some $C > 0$. Fix $\epsilon > 0$ such that $2 - C \epsilon \geq \frac{3}{2}$. For $t$ sufficiently large, one has
\[2 \left\Vert u(t) \right\Vert_{L^4}^4 - \frac{C}{\epsilon} \left\Vert u(t) \right\Vert_{L^4}^2 \geq \frac{3}{2} \left\Vert u(t) \right\Vert_{L^4}^4,\]
and one obtains
\[M^{\prime \prime}(t) \geq - 2 \left\Vert u(t) \right\Vert_{H_0^1}^2 + \frac{3}{2} \left\Vert \partial_t u(t) \right\Vert_{L^2}^2 + \frac{3}{2} \left\Vert u(t) \right\Vert_{L^4}^4, \quad t \geq 0 \text{ large}.\]
By definition of the energy, one has
\[\left\Vert u(t) \right\Vert^4_{L^4} = 2 \left\Vert \partial_t u(t) \right\Vert^2_{L^2} + 2 \left\Vert u(t) \right\Vert^2_{H_0^1} - 4 E(t)\]
and for $t$ sufficiently large, this gives
\[M^{\prime \prime}(t) \geq \frac{9}{2} \left\Vert \partial_t u(t) \right\Vert_{L^2}^2 + \left\Vert u(t) \right\Vert_{H_0^1}^2 - 6 E(t).\]
In particular, one has $M^{\prime \prime}(t) \rightarrow + \infty$, and consequently $M(t) \rightarrow + \infty$. For $t$ sufficiently large, one also has $M^{\prime \prime}(t) \geq \frac{9}{2} \left\Vert \partial_t u(t) \right\Vert_{L^2}^2$. Using (\ref{eq_proof_blow_up_damped_reg_1}) and the Cauchy-Schwarz inequality, this gives
\[M^\prime(t)^2 \leq \frac{8}{9} M(t) M^{\prime \prime}(t), \quad t \geq 0 \text{ large}.\]
For $\alpha > 0$, set $\tilde{M}(t) = M(t)^{-\alpha}$. For $t$ sufficiently large, one has
\begin{align*}
\tilde{M}^{\prime \prime}(t) 
& = \alpha M(t)^{- \alpha - 2} \left( (\alpha + 1) M^\prime(t)^2 - M(t) M^{\prime \prime}(t) \right) \\ 
& \leq \alpha \left( \frac{8}{9} (\alpha + 1) - 1 \right) M(t)^{- \alpha - 1} M^{\prime \prime}(t)
\end{align*}
Hence, $\tilde{M}$ is a concave function for $t$ sufficiently large if $\alpha$ is chosen sufficiently small. As $\tilde{M} > 0$ and $\tilde{M}(t) \rightarrow 0$, this gives a contradiction.
\end{proof}

\section{Convergence towards a stationary solution}

\subsection{Convergence of a bounded sequence along a time sequence}

Here, we show the following proposition.

\begin{prop}\label{prop_cv_sequence_time}
Fix $\gamma \in L^\infty(\Omega, \mathbb{R}_+)$ satisfying the GCC. Let $\left( u^0_n, u^1_n \right)_{n \in \mathbb{N}}$ be a sequence of elements of $H_0^1(\Omega) \times L^2(\Omega)$, and for $n \in \mathbb{N}$, write $u_n$ for the solution of \textnormal{(\ref{KG_nL_damped})} with initial data $\left( u^0_n, u^1_n \right)$. Let $(T_n)_{n \in \mathbb{N}}$ be a time-sequence satisfying $T_n \rightarrow + \infty$. Assume that each $u_n$ exists on $\mathbb{R}_+$, that there exists $C > 0$ such that 
\begin{equation}\label{eq_prop_cv_sequence_time_1}
\left\Vert u_n(t) \right\Vert_{H_0^1} + \left\Vert \partial_t u_n(t) \right\Vert_{L^2} \leq C, \quad t \geq 0, \quad n \in \mathbb{N},
\end{equation}
and that for all $T > 0$,
\begin{equation}\label{eq_prop_cv_sequence_time_2}
\int_{T_n - T}^{T_n + T} \int_\Omega \gamma \left\vert \partial_t u_n \right\vert^2 \mathrm{d}x \mathrm{d} t \xrightarrow{n \rightarrow \infty} 0.
\end{equation}
Then there exist an increasing function $\phi: \mathbb{N} \rightarrow \mathbb{N}$ and a stationary solution $w$ of \textnormal{(\ref{KG_nL})} such that for all $T > 0$, 
\[\sup_{t \in [-T, T]} \left( \left\Vert u_{\phi(n)}(T_{\phi(n)} + t) - w \right\Vert_{H_0^1} + \left\Vert \partial_t u_{\phi(n)}(T_{\phi(n)} + t) \right\Vert_{L^2} \right) \xrightarrow{n \rightarrow \infty} 0.\]
\end{prop}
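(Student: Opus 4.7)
The plan is to shift time and extract a weak limit. Set $v_n(t, x) = u_n(T_n + t, x)$; once $T_n > T$ each $v_n$ solves \textnormal{(\ref{KG_nL_damped})} on $[-T, T]$ and, by (\ref{eq_prop_cv_sequence_time_1}), is uniformly bounded in $L^\infty_\mathrm{loc}(\mathbb{R}; H_0^1(\Omega))$, with $\partial_t v_n$ uniformly bounded in $L^\infty_\mathrm{loc}(\mathbb{R}; L^2(\Omega))$. A standard Banach--Alaoglu plus diagonal extraction yields a subsequence and a limit $w$ such that $v_n \rightharpoonup w$ and $\partial_t v_n \rightharpoonup \partial_t w$ in the weak-$\ast$ sense on every bounded time interval. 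Because $H_0^1(\Omega) \hookrightarrow L^p(\Omega)$ is compact for $p < 6$ and $\partial_t v_n$ is uniformly $L^2$-bounded, an Aubin--Lions / Arzela--Ascoli argument upgrades this to strong convergence $v_n \to w$ in $C_\mathrm{loc}(\mathbb{R}; L^p(\Omega))$ for every $p < 6$. In particular $v_n^3 \to w^3$ in $C_\mathrm{loc}(\mathbb{R}; L^{4/3}(\Omega))$, which lets one pass to the limit in the equation in $\mathcal{D}'(\mathbb{R} \times \Omega)$ to obtain $\square w + \gamma \partial_t w + \beta w = w^3$.

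Next I would show that $w$ is time-independent. Combining (\ref{eq_prop_cv_sequence_time_2}) with $\gamma \geq \alpha > 0$ on $\omega$ gives $\partial_t v_n \to 0$ strongly in $L^2_\mathrm{loc}(\mathbb{R}; L^2(\omega))$, so the weak limit satisfies $\partial_t w = 0$ on $\mathbb{R} \times \omega$. After a brief regularity bootstrap using the equation itself, the function $W = \partial_t w$ solves the linear damped wave equation $\square W + \gamma \partial_t W + (\beta - 3 w^2) W = 0$ with bounded potential and vanishes on the open set $\mathbb{R} \times \omega$ satisfying GCC. A unique continuation theorem for wave equations with $L^\infty$ potentials (Robbiano--Zuily, Tataru) then forces $W \equiv 0$, so $w$ is a stationary solution of \textnormal{(\ref{KG_nL})}; in particular $K(w) = 0$ and $J(w) = \tfrac{1}{4}\|w\|_{H_0^1}^2$.

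The last and most delicate step is upgrading weak convergence to strong convergence in $H_0^1 \times L^2$. The energy identity together with (\ref{eq_prop_cv_sequence_time_2}) gives that $E(v_n(t), \partial_t v_n(t))$ is uniformly close to some constant $E_\infty$ on $[-T, T]$, and combined with the already-known uniform convergence $\|v_n(t)\|_{L^4}^4 \to \|w\|_{L^4}^4$, one deduces that $\|v_n(t)\|_{H_0^1}^2 + \|\partial_t v_n(t)\|_{L^2}^2$ converges uniformly in $t$ to a constant $L$. By Lemma \ref{lem_continuity_source_to_sol} it is enough to prove strong convergence at a single time, say $t = 0$. Writing $z_n = v_n - w$, the difference solves
\[\square z_n + \gamma \partial_t z_n + \beta z_n = (v_n^2 + v_n w + w^2) z_n,\]
with $z_n \rightharpoonup 0$, with $\sqrt{\gamma}\,\partial_t z_n \to 0$ in $L^2_\mathrm{loc}(\mathbb{R}; L^2)$, and with the right-hand side tending to $0$ strongly in $L^\infty_\mathrm{loc}(\mathbb{R}; L^{4/3})$ by compactness of $H_0^1 \hookrightarrow L^4$. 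A propagation-of-compactness argument under GCC, in the spirit of Dehman--Lebeau--Zuazua and as used in \cite{Joly-Laurent}, then converts the damping-driven decay observed on $\omega$ into strong decay everywhere, giving $z_n \to 0$ in $C_\mathrm{loc}(\mathbb{R}; H_0^1) \cap C^1_\mathrm{loc}(\mathbb{R}; L^2)$.

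The hard part will be this final propagation-of-compactness step: turning $\sqrt{\gamma}\partial_t z_n \to 0$ together with the nonlinear perturbation $(v_n^2 + v_n w + w^2) z_n$ into genuine $H_0^1 \times L^2$ compactness of $z_n$. This is the only place where GCC is used in an essential way beyond the unique continuation of the second paragraph, and it is where one must really exploit microlocal propagation, or equivalently a quantitative observability estimate for the damped linear wave equation with a bounded potential.
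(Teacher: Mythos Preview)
Your outline is a plausible route, but it is genuinely different from the paper's and you underestimate one step.

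The paper does \emph{not} pass through a weak limit and then upgrade. It obtains strong $H_0^1\times L^2$ compactness of $(u_n(T_n),\partial_t u_n(T_n))$ directly: write the Duhamel formula, observe that the nonlinearity $u_n^3$ gains regularity (it lies in $L^1_{\mathrm{loc}}(H_0^\epsilon)$ for some $\epsilon>0$, by a Dehman--Lebeau--Zuazua type result, Corollary~4.2 of \cite{Joly-Laurent}), and use the exponential decay of $e^{tA}$ under GCC (Theorem~\ref{thm_stab_linear}) to sum the Duhamel tail in $X^\epsilon$. Rellich then gives a subsequence converging strongly in $X^0$, and Lemma~\ref{lem_continuity_source_to_sol} propagates this to every $[-T,T]$. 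So in the paper GCC enters through \emph{linear stabilisation}, not through a propagation-of-compactness/defect-measure argument on the difference $z_n$. What each buys: the paper's route avoids any microlocal analysis of the sequence and reduces everything to properties of the single limit $u_\infty$; your route avoids the regularity-gain lemma for the sequence but trades it for a genuine $H^1$-level defect-measure argument.

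The step you most underestimate is the ``brief regularity bootstrap''. To apply unique continuation to $W=\partial_t w$ you need $3w^2\in L^\infty$, i.e.\ at least $w\in L^\infty(\mathbb{R};H^s)$ with $s>3/2$; \emph{a priori} you only have $w\in L^\infty(\mathbb{R};H_0^1)$. In the paper this upgrade is the longest part of the proof: one repeats the Duhamel/regularity-gain/linear-decay argument on the limit to reach $L^\infty(\mathbb{R};X^1)$, and then --- because the Robbiano--Zuily result as quoted in \cite{Joly-Laurent} requires coefficients smooth in $x$ and \emph{analytic in $t$} --- one further proves analyticity in time via a theorem of Hale--Raugel and full $\mathscr{C}^\infty(\mathbb{R}\times\overline{\Omega})$ regularity. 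If you plan to shortcut this by invoking a Tataru-type unique continuation valid for merely bounded potentials, say so explicitly and check it applies on a manifold with boundary; otherwise this ``brief'' step is in fact the crux. A minor point on your last step: $L^\infty_t L^{4/3}_x$ is not the natural space for the source in a defect-measure argument; better is to use $z_n\to 0$ in $C_tL^2_x$ together with the $L^\infty_tL^3_x$ bound on $v_n^2+v_nw+w^2$ to get $f_n\to 0$ in $C_tL^{6/5}_x\hookrightarrow C_tH^{-1}_x$, which is what the propagation actually needs.
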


\begin{proof}
We divide the proof into three steps.

\paragraph{Step 1: asymptotic compactness.} We use Corollary 4.2 of \cite{Joly-Laurent} (which relies on a result from \cite{DehmanLebeauZuazua}), which we copy here for convenience.

\begin{lem}[Corollary 4.2 of \cite{Joly-Laurent}]
Take $f \in \mathscr{C}^1(\mathbb{R}, \mathbb{R})$ such that
\[f(0) = 0, \quad x f (x) \geq 0, \quad \vert f(x) \vert \lesssim (1 + \vert x \vert )^p, \quad \vert f^\prime(x) \vert \lesssim (1 + \vert x \vert )^{p - 1}\]
with $1 \leq p < 5$. Consider $R > 0$, $T > 0$, $0 \leq s < 1$, and set $\epsilon = \min\left( 1 - s, \frac{5 - p}{2}, \frac{17 - 3 p}{14}\right) > 0$. There exist $C > 0$ and $(q, r)$ satisfying 
\[\frac{1}{q} + \frac{3}{r} = \frac{1}{2}, \quad q \in \left[\frac{7}{2}, + \infty\right]\]
such that the following property holds: if $u \in L^\infty([0, T], H^{1 + s}(\Omega) \cap H_0^1(\Omega))$ satisfies
\[\Vert u \Vert_{L^q([0, T], L^r)} \leq R,\]
then $f(u) \in L^1([0, T], H_0^{s + \epsilon}(\Omega))$, with
\[\left\Vert f(u) \right\Vert_{L^1([0, T], H^{s + \epsilon})} \leq C \left\Vert u \right\Vert_{L^\infty([0, T], H^{1 + s})}.\]
\end{lem}

We will use this lemma with $f(x) = x^3$. Recall that $X^s$ is defined by (\ref{eq_def_X^s}). We prove the following corollary.

\begin{cor}\label{cor_gain_of_regularity_nL}
Consider $T > 0$, $0 \leq s < 1$, $C_0 > 0$, and set $\epsilon = \min\left(1 - s, \frac{4}{7}\right)$. There exists $C > 0$ such that for all $\left( u^0, u^1 \right) \in X^s$, if the solution $u$ of \textnormal{(\ref{KG_nL_damped})} with initial data $\left( u^0, u^1 \right)$ exists on $[0, T]$ and satisfies $u \in L^\infty([0, T], H^{1 + s}(\Omega) \cap H_0^1(\Omega))$, with 
\[\Vert u \Vert_{L^\infty([0, T], H_0^1)} + \left\Vert \partial_t u \right\Vert_{L^\infty([0, T], L^2)} \leq C_0,\]
then one has $u^3 \in L^1([0, T], H^{s + \epsilon}_0(\Omega))$, with
\[\left\Vert u^3 \right\Vert_{L^1([0, T], H_0^{s + \epsilon})} \leq C \left\Vert u \right\Vert_{L^\infty([0, T], H^{1 + s})}.\]
\end{cor}

\begin{proof}
The function $f(x) = x^3$ clearly satisfies the assumption of the previous theorem. By Strichartz estimates (Theorem \ref{thm_strichartz_estimates}), one has
\[\Vert u \Vert_{L^q([0, T], L^r)} \lesssim \Vert u^3 \Vert_{L^1([0, T], L^2)} + \Vert u^0 \Vert_{H_0^1} + \Vert u^1 \Vert_{L^2}.\]
for all $(q, r)$ satisfying $\frac{1}{q} + \frac{3}{r} = \frac{1}{2}$ and $q \in \left[\frac{7}{2}, + \infty\right]$. Using the Sobolev embedding $H^1(\Omega) \hookrightarrow L^6(\Omega)$, one obtains
\[\Vert u \Vert_{L^q([0, T], L^r)} \lesssim \Vert u \Vert_{L^\infty([0, T], H_0^1)}^3 + \Vert u^0 \Vert_{H_0^1} + \Vert u^1 \Vert_{L^2} \leq C_0^3 + C_0.\]
Hence, we can apply Corollary 4.2 of \cite{Joly-Laurent}: it gives
\[\left\Vert u^3 \right\Vert_{L^1([0, T], H_0^{s + \epsilon})} \lesssim \left\Vert u \right\Vert_{L^\infty([0, T], H^{1 + s})}\]
with a constant depending only on $C_0$. This completes the proof of Corollary \ref{cor_gain_of_regularity_nL}.
\end{proof}

We will use this result to find a subsequence of $\left( u_n(T_n), \partial_t u_n(T_n) \right)$ which converges in $H_0^1(\Omega) \times L^2(\Omega)$. Recall that
\[A = 
\begin{pmatrix}
0 & \mathrm{Id} \\
\Delta - \beta & - \gamma
\end{pmatrix}\]
is the infinitesimal generator of the linear part of (\ref{KG_nL_damped}), that $e^{tA}$ is the associated semi-group, and put 
\[U_n = \left(u_n, \partial_t u_n \right) \quad \text{ and } \quad F_n = \left(0, u_n^3 \right)\]
We will use the well-know fact that the GCC implies the stabilisation of the linear version of (\ref{KG_nL_damped}), as stated in Theorem \ref{thm_stab_linear}. Using the Duhamel formula, we can write
\begin{align*}
U_n(T_n) & = e^{T_n A} U_n(0) + \sum_{k = 0}^{\lfloor T_n \rfloor - 1} e^{kA} \int_0^1 e^{s A} F_n(T_n - k - s) \mathrm{d}s + \int_{\lfloor T_n \rfloor}^{T_n} e^{s A} F_n(T_n - s) \mathrm{d}s \\ 
& = e^{T_n A} U_n(0) + \sum_{k = 0}^{\lfloor T_n \rfloor - 1} e^{kA} I_{n, k} + I_n.
\end{align*}

We show that the Duhamel term is bounded in $X^\epsilon$, with $\epsilon = \frac{4}{7}$. To do so, write
\begin{align*}
\Vert I_{n, k} \Vert_{X^\epsilon} & \leq \int_0^1 \left\Vert e^{s A} F_n(T_n - k - s) \right\Vert_{X^\epsilon} \mathrm{d}s \\
& \lesssim \int_0^1 \left\Vert F_n(T_n - k - s) \right\Vert_{X^\epsilon} \mathrm{d}s \\
& = \int_0^1 \left\Vert u_n(T_n - k - s)^3 \right\Vert_{H_0^\epsilon} \mathrm{d}s.
\end{align*}
By (\ref{eq_prop_cv_sequence_time_1}), one has
\[\left\Vert u_n(T_n - k - \cdot ) \right\Vert_{L^\infty((0, 1), H_0^1)} \leq \left\Vert u_n \right\Vert_{L^\infty((0, \infty), H_0^1)} \lesssim 1.\]
Hence, we can apply Corollary \ref{cor_gain_of_regularity_nL} (with $s = 0$) to find $\Vert I_{n, k} \Vert_{X^\epsilon} \lesssim 1$. Similarly, one shows $\Vert I_n \Vert_{X^\epsilon} \lesssim 1$. Using the linear stabilisation (Theorem \ref{thm_stab_linear}), we see that there exists $\lambda > 0$ such that
\[\left\Vert \sum_{k = 0}^{\lfloor T_n \rfloor - 1} e^{kA} I_{n, k} + I_n \right\Vert_{X^\epsilon} \lesssim \sum_{k = 0}^{\lfloor T_n \rfloor - 1} e^{- \lambda k} + 1 \lesssim 1.\]

We have proved that the sequence $\left( U_n(T_n) - e^{T_n A} U_n(0) \right)_n$ is bounded in $X^\epsilon$. By Rellich's theorem, there exists $U_\infty(0) \in H_0^1(\Omega) \times L^2(\Omega)$ such that up to a subsequence, one has
\[U_n(T_n) - e^{T_n A} U_n(0) \xrightarrow{n \rightarrow \infty} U_\infty(0)\]
in $H_0^1(\Omega) \times L^2(\Omega)$. Using linear stabilisation again, one finds $U_n(T_n) \xrightarrow{n \rightarrow \infty} U_\infty(0)$ in $H_0^1(\Omega) \times L^2(\Omega)$. 

Let $u_\infty$ be the solution of (\ref{KG_nL_damped}) with initial data $U_\infty(0)$. We show that $u_\infty$ is defined on $\mathbb{R}$. Fix $T > 0$. For $n$ and $m$ sufficiently large, Lemma \ref{lem_continuity_source_to_sol} gives
\begin{align*}
& \sup_{t \in [-T, T]} \left( \left\Vert u_n(T_n + t) - u_m(T_m + t) \right\Vert_{H_0^1}^2 + \left\Vert \partial_t u_n(T_n + t) - \partial_t u_m(T_m + t) \right\Vert_{L^2}^2 \right) \\
\lesssim & \left\Vert u_n(T_n) - u_m(T_m) \right\Vert_{H_0^1}^2 + \left\Vert \partial_t u_n(T_n) - \partial_t u_m(T_m) \right\Vert_{L^2}^2
\end{align*}
so that $\left( u_n(T_n + \cdot) \right)_n$ is a Cauchy sequence in $\mathscr{C}^0([- T, T], H_0^1(\Omega)) \cap \mathscr{C}^1([- T, T], L^2(\Omega))$. The limit is a solution of (\ref{KG_nL_damped}) on $[-T, T]$, and coincides with $u_\infty$ near $0$. Hence, $u_\infty$ is defined on $\mathbb{R}$. Note that (\ref{eq_prop_cv_sequence_time_1}) gives 
\begin{equation}\label{eq_proof_prop_cv_seq_time_0}
\left\Vert u_\infty(t) \right\Vert_{H_0^1} + \left\Vert \partial_t u_\infty(t) \right\Vert_{L^2} \leq C, \quad t \in \mathbb{R},
\end{equation}
and (\ref{eq_prop_cv_sequence_time_2}) implies 
\[\int_\mathbb{R} \int_\Omega \gamma \left\vert \partial_t u_\infty \right\vert^2 \mathrm{d}x \mathrm{d}t = 0.\]
Hence, one has $E\left(u_\infty(t), \partial_t u_\infty(t) \right) = E\left(u_\infty(0), \partial_t u_\infty(0) \right)$ for all $t \in \mathbb{R}$, and $\partial_t u_\infty(t, x) = 0$ for all $t \in \mathbb{R}$ and almost all $x \in \omega$. 

\paragraph{Step 2: regularity of the limit.}

To ease notations, write $u = u_\infty$. In this step, we will use Corollary \ref{cor_gain_of_regularity_nL} again to show that
\begin{equation}\label{eq_proof_prop_cv_seq_time_1}
\left( u(0), \partial_t u(0) \right) \in H^2(\Omega) \times H_0^1(\Omega)
\end{equation}
and we will use a result of \cite{HaleRaugel} to prove that for all $\alpha \in \left( \frac{1}{2}, 1 \right)$,
\begin{equation}\label{eq_proof_prop_cv_seq_time_2}
u: \mathbb{R} \longrightarrow H^{1 + \alpha}(\Omega) \cap H_0^1(\Omega) \text{ is analytic.} 
\end{equation} 
Finally, we will see that it implies
\begin{equation}\label{eq_proof_prop_cv_seq_time_3}
u \in \mathscr{C}^\infty(\mathbb{R} \times \overline{\Omega}).
\end{equation}

\underline{Proof of (\ref{eq_proof_prop_cv_seq_time_1}).} As above, set $U = \left(u, \partial_t u \right)$ and $F = \left(0, u^3 \right)$. By Duhamel's formula, one has
\begin{align*}
U(t) & = e^{n A} U(t - n) + \int_0^{n} e^{sA} F(t - s) \mathrm{d}s \\ 
& = e^{n A} U(t - n) + \sum_{k = 0}^{n - 1} e^{kA} \int_0^1 e^{s A} F(t - k - s) \mathrm{d}s
\end{align*}
for $t \in \mathbb{R}$ and $n \in \mathbb{N}$. By (\ref{eq_proof_prop_cv_seq_time_0}), linear stabilization gives 
\[e^{n A} U(t - n) \xrightarrow{n \rightarrow \infty} 0, \quad t \in \mathbb{R},\]
in $X^0$. Furthermore, one has
\begin{align*}
\left\Vert \int_0^1 e^{s A} F(t - k - s) \mathrm{d}s \right\Vert_{X^0} & \lesssim \int_0^1 \left\Vert F(t - k - s) \right\Vert_{X^0} \mathrm{d}s \\
& = \left\Vert u^3 \right\Vert_{L^1([t - k - 1, t - k], L^2)}
\end{align*}
implying
\[\left\Vert \int_0^1 e^{s A} F(t - k - s) \mathrm{d}s \right\Vert_{X^0} \lesssim \left\Vert u \right\Vert_{L^\infty([t - k - 1, t - k], H_0^1)}^3 \lesssim 1\]
by the Sobolev embedding $H^1(\Omega) \hookrightarrow L^6(\Omega)$. In particular, using linear stabilization again, one finds
\[\left\Vert e^{kA} \int_0^1 e^{s A} F(t - k - s) \mathrm{d}s \right\Vert_{X^0} \leq C e^{- \lambda k}\]
for some $C > 0$ and $\lambda > 0$. This gives
\begin{equation}\label{eq_proof_prop_cv_seq_time_4}
U(t) = \sum_{k = 0}^\infty e^{kA} \int_0^1 e^{s A} F(t - k - s) \mathrm{d}s, \quad t \in \mathbb{R},
\end{equation}
in $X^0$. 

Next, we prove
\begin{equation}\label{eq_proof_prop_cv_seq_time_5}
U \in L^\infty(\mathbb{R}, X^1)
\end{equation}
by using Corollary \ref{cor_gain_of_regularity_nL} two times. First, by (\ref{eq_proof_prop_cv_seq_time_0}), Corollary \ref{cor_gain_of_regularity_nL} with $T = 1$ and $s = 0$ gives
\[\left\Vert u(t - k - \cdot)^3 \right\Vert_{L^1((0, 1), H_0^{\epsilon})} \lesssim 1, \quad t \in \mathbb{R}, \quad k \in \mathbb{N},\]
where $\epsilon = \frac{4}{7}$. In particular, using linear stabilisation in $X^{\epsilon}$, one has 
\[\left\Vert e^{kA} \int_0^1 e^{s A} F(t - k - s) \mathrm{d}s \right\Vert_{X^{\epsilon}} \lesssim e^{- \lambda k} \left\Vert u(t - k - \cdot)^3 \right\Vert_{L^1((0, 1), H_0^{\epsilon})} \lesssim e^{- \lambda k},\]
implying that equality (\ref{eq_proof_prop_cv_seq_time_4}) holds in $X^{\epsilon}$, and $U \in L^\infty(\mathbb{R}, X^\epsilon)$. Second, Corollary \ref{cor_gain_of_regularity_nL} with $T = 1$ and $s = \frac{4}{7}$ gives
\[\left\Vert u(t - k - \cdot)^3 \right\Vert_{L^1((0, 1), H_0^{s + \epsilon^\prime})} \lesssim 1, \quad t \in \mathbb{R}, \quad k \in \mathbb{N}\]
where $\epsilon^\prime = \frac{3}{7}$. As above, this proves that equality (\ref{eq_proof_prop_cv_seq_time_4}) holds in $X^1$, and that (\ref{eq_proof_prop_cv_seq_time_5}) holds true. In particular, (\ref{eq_proof_prop_cv_seq_time_1}) is true. Note that using the Sobolev embedding $H^2(\Omega) \hookrightarrow \mathscr{C}^0(\overline{\Omega})$ (see for example \cite{Adams-Fournier}, 4.12 Part II with $n = 3$, $m = p = 2$, $j = 0$), one finds $u \in L^\infty(\mathbb{R} \times \overline{\Omega})$.

\underline{Proof of (\ref{eq_proof_prop_cv_seq_time_2}).} Following \cite{Joly-Laurent}, we use Theorem 2.20 of \cite{HaleRaugel} (applied with hypotheses (H3mod) and (H5)), which we copy here for convenience.

\begin{thm}\label{thm_hale_raugel}
Let $Y$ be a Banach space. Let $P_n \in \mathscr{L}(Y)$ be a sequence of continuous linear maps and let $Q_n = \mathrm{Id} - P_n$. Let $A: D(A) \rightarrow Y$ be the generator of a continuous semi-group $e^{t A}$ and let $G \in \mathscr{C}^1(Y, Y)$. Let $U$ be a global solution in $Y$ of
\[\partial_t U(t) = AU(t) + G(U(t)), \quad t \in \mathbb{R}.\]
We further assume that:
\begin{enumerate}[label=(\roman*)]
\item $\{U(t), t \in \mathbb{R}\}$ is contained in a compact set $K$ of $Y$.
\item For any $y \in Y$, $P_n y$ converges to $y$ when $n$ goes to infinity and $(P_n)$ and $(Q_n)$ are sequences of $\mathscr{L}(Y)$ bounded by a constant $C_0$.
\item The operator $A$ splits as $A = A_1 + B_1$, where $B_1$ is bounded and $A_1$ commutes with $P_n$.
\item There exist $M$ and $\lambda > 0$ such that $\left\Vert e^{t A} \right\Vert_{\mathscr{L}(Y)} \leq M e^{- \lambda t}$ for all $t \geq 0$.
\item $G$ is analytic in the ball $B_Y(0,r)$, where $r$ is such that $r \geq 4 C_0 \sup_{t \in \mathbb{R}} \Vert U(t) \Vert_Y$. More precisely,
there exists $\rho > 0$ such that $G$ can be extended to an holomorphic function of $B_Y(0,r) + i B_Y(0, \rho)$.
\item $\{ DG(U(t)) V, t \in \mathbb{R}, \Vert V \Vert_Y \leq 1 \}$ is a relatively compact set of $Y$.
\end{enumerate}
Then the solution $U: \mathbb{R} \rightarrow Y$ is analytic.
\end{thm}

Fix $\alpha \in \left( \frac{1}{2}, 1 \right)$. Write $\left( \lambda_k \right)_{k \in \mathbb{N}}$ for the eigenvalues of the Dirichlet Laplacian, and $\left( \varphi_k \right)_{k \in \mathbb{N}}$ for a basis of normalized eigenvectors. We will apply this theorem with $Y = X^\alpha$. For $n \in \mathbb{N}^\ast$, let $P_n$ be the restriction to $Y$ of the $L^2(M) \times L^2(M)$ orthogonal projection on the vector space generated by the vectors 
\[(\varphi_1, 0), \cdots, (\varphi_n, 0), (0, \varphi_1), \cdots, \text{ and } (0, \varphi_n).\]
By (\ref{eq_X^s_domains_Delta}), we can equip $Y$ with a norm represented by a Fourier series, using the sequence $\left( \varphi_k \right)_{k \in \mathbb{N}}$. This implies $P_n \in \mathscr{L}(Y)$ and $Q_n = \mathrm{Id} - P_n \in \mathscr{L}(Y)$, with 
\begin{equation}\label{eq_proof_prop_cv_seq_time_6}
\left\Vert P_n \right\Vert_{\mathscr{L}(Y)} \leq C_0 \quad \text{and} \quad \left\Vert Q_n \right\Vert_{\mathscr{L}(Y)} \leq C_0
\end{equation}
for some $C_0 > 0$.

Let $G: Y \rightarrow Y$ be given by $G\left( u^0, u^1 \right) = \left( 0, (u^0)^3 \right)$. Note that $G$ is well-defined as $H^{1 + \alpha}(\Omega) \cap H_0^1(\Omega)$ is an algebra, as $\alpha > \frac{1}{2}$. We need to use a smooth version of the damping $\gamma$. Recall that $\omega$ is an open subset of $\Omega$ satisfying the GCC, and that $\gamma(x) > \alpha > 0$ for almost all $x \in \omega$. Recall also that $u$ satisfies $\gamma \partial_t u = 0$ and $\square u + \beta u = u^3$. By continuity of generalized geodesics (see for example Theorem 3.34 of \cite{MelroseSjostrand}), there exists an open subset $\tilde{\omega}$, compactly included in $\omega$, such that the GCC holds for $\tilde{\omega}$. Fix $\tilde{\gamma} \in \mathscr{C}_\mathrm{c}^\infty(\omega, [0, 1])$ such that $\tilde{\gamma} = 1$ on $\tilde{\omega}$. one has $\tilde{\gamma} \partial_t u = 0$, implying
\[\square u + \tilde{\gamma} \partial_t u + \beta u = u^3.\]

Let $\tilde{A}$ be the infinitesimal generator of the linear part of this equation, that is 
\[A = 
\begin{pmatrix}
0 & - 1 \\
- \Delta + \beta & 0
\end{pmatrix}
+
\begin{pmatrix}
0 & 0 \\
0 & \tilde{\gamma}
\end{pmatrix}
= A_1 + B_1.\]

Now, we check that the assumptions of the previous theorem are satisfied. One has $U \in L^\infty(\mathbb{R}, X^1)$, and as $\alpha < 1$ the embedding $X^1 \rightarrow X^\alpha$ is compact by the Rellich theorem, so \emph{(i)} is clear. The fact that for all $y \in Y$, $P_n y$ converges to $y$ when $n$ goes to infinity is obvious when the norm of $Y$ is expressed as a Fourier series. Together with (\ref{eq_proof_prop_cv_seq_time_6}), this gives \emph{(ii)}. As $\tilde{\gamma}$ is smooth, \emph{(iii)} is clear. We have already used the fact that \emph{(iv)} is true (see Theorem \ref{thm_stab_linear}). For $\left( u^0, u^1 \right) \in Y$ and $(v_0, v_1) \in Y$, the function
\[z \in \mathbb{C} \longmapsto G\left( \left( u^0, u^1 \right) + z (v_0, v_1) \right) = \left( 0, (u_0 + z v_0)^3 \right)\]
is well-defined and analytic, as $H^{1 + \alpha}(\Omega) \cap H_0^1(\Omega)$ is an algebra, so \emph{(v)} is true.

Finally, we check that \emph{(vi)} holds. one has
\[\left\{ DG(U(t)) V, t \in \mathbb{R}, \Vert V \Vert_Y \leq 1 \right\} = \left\{\left( 0, u(t)^2 v_0 \right), t \in \mathbb{R}, \Vert v_0 \Vert_{H^{1 + \alpha} \cap H_0^1} \leq 1 \right\}.\]
Take $t \in \mathbb{R}$ and $v_0 \in H^{1 + \alpha}(\Omega) \cap H_0^1(\Omega)$ such that $\Vert v_0 \Vert_{H^{1 + \alpha} \cap H_0^1} \leq 1$. For $\epsilon > 0$ sufficiently small, using again the fact that $H^{1 + \alpha}(\Omega) \cap H_0^1(\Omega)$ is an algebra, one obtainsan write
\[\left\Vert u(t)^2 v_0 \right\Vert_{H_0^{\alpha + \epsilon}} \leq \left\Vert u(t)^2 v_0 \right\Vert_{H^{1 + \alpha} \cap H_0^1} \lesssim \left\Vert u(t) \right\Vert_{H^{1 + \alpha} \cap H_0^1}^2 \left\Vert v_0 \right\Vert_{H^{1 + \alpha} \cap H_0^1} \lesssim \left\Vert u \right\Vert_{L^\infty(\mathbb{R}, X^1)}^2.\]
The embedding $H_0^{\alpha + \epsilon}(\Omega) \rightarrow H_0^\alpha(\Omega)$ is compact by the Rellich theorem, so \emph{(vi)} is true. The previous theorem gives (\ref{eq_proof_prop_cv_seq_time_2}).

\underline{Proof of (\ref{eq_proof_prop_cv_seq_time_3}).} one has $\partial_t^k u(t) \in H^{1 + \alpha}(\Omega) \cap H_0^1(\Omega)$ for all $k \in \mathbb{N}$ and all $\alpha \in \left( \frac{1}{2}, 1 \right)$. This will give (\ref{eq_proof_prop_cv_seq_time_3}) by standard elliptic regularity properties and a bootstrap argument. More precisely, we use the following result, which can be found (for example) in \cite{GilbargTrudinger}, Theorem 9.19. Note that for $\ell \in \mathbb{N}$ and $\lambda \in (0, 1)$, the norm of $\mathscr{C}^{\ell, \lambda}(\overline{\Omega})$ is given by
\[\Vert v \Vert_{\mathscr{C}^{\ell, \lambda}} = \max_{\vert \beta \vert \leq \ell} \left( \Vert \partial_x^\beta v \Vert_{L^\infty(\overline{\Omega})} + \sup_{x, y \in \overline{\Omega}, x \neq y} \frac{\left\vert \partial_x^\beta v(x) - \partial_x^\beta v(y) \right\vert}{\left\vert x - y \right\vert^\lambda} \right), \quad v \in \mathscr{C}^{\ell, \lambda}(\overline{\Omega}).\]

\begin{thm}\label{thm_gilbarg_trudinger}
Let $\mathcal{O}$ be a smooth Riemannian manifold, with or without boundary. Let $u \in H^2(\mathcal{O})$ be such that $\Delta u = f \in \mathscr{C}^{\ell, \lambda}(\overline{\mathcal{O}})$, with $\ell \in \mathbb{N}$ and $\lambda \in (0, 1)$. Then $u \in \mathscr{C}^{\ell + 2, \lambda} \left( \overline{\mathcal{O}} \right)$.
\end{thm}

Set $\alpha = \frac{3}{4}$. We will use the fact that there exists $\lambda \in (0, 1)$ such that the Sobolev embedding $H^{1 + \alpha}(\Omega) \hookrightarrow \mathscr{C}^{0, \lambda}(\overline{\Omega})$ holds true (see for example \cite{Adams-Fournier}, 4.12, Part II with $n = 3$, $p = 2$ and $j = 1$). We prove by induction on $\ell \in \mathbb{N}$ that for all $\phi \in \mathscr{C}_\mathrm{c}^\infty(\mathbb{R})$ and all $k \in \mathbb{N}$, 
\begin{equation}\label{eq_proof_prop_cv_seq_time_7}
\phi \partial_t^k u \in \mathscr{C}^{2 \ell, \lambda} \left( \mathbb{R} \times \overline{\Omega} \right).
\end{equation}

We start with $\ell = 0$. Consider $\phi \in \mathscr{C}_\mathrm{c}^\infty(\mathbb{R})$, $k \in \mathbb{N}$, and let $I \subset \mathbb{R}$ be a compact interval such that $\supp \phi \subset I$. One has $\partial_t^k u$, $\partial_t^{k + 1} u \in \mathscr{C}^0(I, H^{1 + \alpha}(\Omega))$, implying
\[\partial_t^k u, \partial_t^{k + 1} u \in L^\infty(I, \mathscr{C}^{0, \lambda}(\overline{\Omega}))\]
by the Sobolev embedding mentioned above. For $x, y \in \overline{\Omega}$, $t, t^\prime \in I$, one has
\begin{align*}
& \left\vert \phi(t) \partial_t^k u(t, x) - \phi(t^\prime) \partial_t^k u(t^\prime, y) \right\vert \\
\leq \ & \left\vert \phi(t) \right\vert \left\vert \partial_t^k u(t, x) - \partial_t^k u(t, y) \right\vert + \left\vert \int_{t^\prime}^t \partial_t \left( \phi \partial_t^k u \right)(s, y) \mathrm{d}s \right\vert \\
\lesssim \ & \left\Vert \partial_t^k u \right\Vert_{L^\infty(I, \mathscr{C}^{0, \lambda})} \left\vert x - y \right\vert^\lambda + \left( \left\Vert \partial_t^k u \right\Vert_{L^\infty(I, \mathscr{C}^{0, \lambda})} + \left\Vert \partial_t^{k + 1} u \right\Vert_{L^\infty(I, \mathscr{C}^{0, \lambda})} \right) \left\vert t - t^\prime \right\vert
\end{align*}
yielding (\ref{eq_proof_prop_cv_seq_time_7}) for $\ell = 0$. 

Now, let $\ell \in \mathbb{N}$ be such that (\ref{eq_proof_prop_cv_seq_time_7}) holds true. As above, consider $\phi \in \mathscr{C}_\mathrm{c}^\infty(\mathbb{R})$, $k \in \mathbb{N}$, and $I \subset \mathbb{R}$ a compact interval such that $\supp \phi \subset I$. One has
\[\left( \partial_t^2 + \Delta\right) \left( \phi \partial_t^k u \right) = \partial_t^2 \left( \phi \partial_t^k u \right) + \phi \partial_t^k \left( \partial_t^2 u + \beta u - u^3 \right).\]
As $\mathscr{C}^{2 \ell, \lambda}(\mathbb{R} \times \overline{\Omega})$ is an algebra, (\ref{eq_proof_prop_cv_seq_time_7}) gives $\left( \partial_t^2 + \Delta\right) \left( \phi \partial_t^k u \right) \in \mathscr{C}^{2 \ell, \lambda}(\mathbb{R} \times \overline{\Omega})$. One also has 
\[\left( \partial_t^2 + \Delta\right) \left( \phi \partial_t^k u \right) \in L^2(I \times \Omega),\]
yielding $\phi \partial_t^k u \in H^2(I \times \Omega)$. By Theorem \ref{thm_gilbarg_trudinger}, applied in a smooth open subset of $I \times \Omega$ containing $\supp \phi \times \Omega$, one obtains (\ref{eq_proof_prop_cv_seq_time_7}) for $\ell + 1$. This proves (\ref{eq_proof_prop_cv_seq_time_3}).

\paragraph{Step 3: identification of the limit.} Here, we complete the proof of Proposition \ref{prop_cv_sequence_time} by showing that $u$ is a stationary solution of (\ref{KG_nL}). We use Corollary 3.2 of \cite{Joly-Laurent} (which is a consequence of Theorem A of \cite{RobbianoZuily}), which we copy here for convenience.

\begin{thm}[Corollary 3.2 of \cite{Joly-Laurent}]
Let $T \in (0, + \infty]$ and let $b$, $(c_i)_{i=1,2,3}$ and $d$ be coefficients in $\mathscr{C}^\infty(\Omega \times [0, T), \mathbb{R})$. Assume moreover that $b$, $c$ and $d$ are analytic in time and that $u$ is a strong solution of
\[\partial_t^2u = \Delta u + b \partial_t u + c \cdot \nabla u + d u, \quad (t, x) \in (-T, T) \times \Omega.\]
Let $\mathcal{O}$ be a nonempty open subset of $\Omega$ such that $u(x, t) = 0$ in $\mathcal{O} \times (-T, T)$. Then $u(x, 0) = 0$ in 
\[\mathcal{O}_T = \left\{x \in \Omega, \mathrm{d}(x, \mathcal{O}) < T \right\}\]
where the distance $\mathrm{d}(x, \mathcal{O})$ is defined as the infimum of the lengths of the $\mathscr{C}^1-$paths between $x$ and a point of $\mathcal{O}$.
\end{thm}

We apply this theorem with $\mathcal{O} = \omega$ and $T = +\infty$: the GCC implies that $\mathcal{O}_T = \Omega$ is that case. As $u \in \mathscr{C}^\infty(\mathbb{R} \times \Omega)$, if $v = \partial_t u$ then one has
\[\partial_t^2 v = \Delta v - \beta v + 3 u^2 v.\]
As $u$ is smooth and analytic in time, and as $v = 0$ on $\omega$, one obtains $v = 0$. Hence, $u$ is a stationary solution of (\ref{KG_nL}), and this completes the proof of Proposition \ref{prop_cv_sequence_time}.
\end{proof}

\subsection{Convergence of a global solution}

Here, we prove Theorem \ref{thm_cv_global_solution}. Let $\left( u^0, u^1 \right) \in H_0^1(\Omega) \times L^2(\Omega)$ be such that the solution $u$ of \textnormal{(\ref{KG_nL_damped})} with initial data $\left( u^0, u^1 \right)$ exists on $\mathbb{R}_+$. In a companion paper \cite{perrinUniform}, we prove that there exists $C > 0$ such that 
\begin{equation}\label{eq_proof_thm_cv_bounded_sol_1}
\left\Vert u(t) \right\Vert_{H_0^1} + \left\Vert \partial_t u(t) \right\Vert_{L^2} \leq C, \quad t \geq 0.
\end{equation}
By (\ref{eq_proof_thm_cv_bounded_sol_1}), the energy of $u$ is bounded. Hence, as the energy is nonincreasing, there exists $E_\infty \leq E\left( u^0, u^1 \right)$ such that
\[E\left( u(t), \partial_t u(t) \right) \xrightarrow{t \rightarrow + \infty} E_\infty.\]
The energy equality gives
\[E\left( u^0, u^1 \right) - E_\infty = \int_0^\infty \int_\Omega \gamma \left\vert \partial_t u \right\vert^2 \mathrm{d}x \mathrm{d} t < \infty\]
and this allows us to use Proposition \ref{prop_cv_sequence_time}. More precisely, let $(T_n)_{n \in \mathbb{N}}$ be a time-sequence satisfying $T_n \rightarrow + \infty$. For all $T > 0$, one has
\[\int_{T_n - T}^{T_n + T} \int_\Omega \gamma \left\vert \partial_t u \right\vert^2 \mathrm{d}x \mathrm{d} t \xrightarrow{n \rightarrow \infty} 0.\]
Using also (\ref{eq_proof_thm_cv_bounded_sol_1}), we see that all the assumptions of Proposition \ref{prop_cv_sequence_time} are satisfied by the constant sequence $\left( u^0_n, u^1_n \right)_{n \in \mathbb{N}} = \left( u^0, u^1 \right)_{n \in \mathbb{N}}$. Hence, there exist an increasing function $\phi: \mathbb{N} \rightarrow \mathbb{N}$ and a stationary solution $w$ of \textnormal{(\ref{KG_nL})}, both depending on the sequence $(T_n)$, such that for all $T > 0$, 
\[\sup_{t \in [-T, T]} \left( \left\Vert u(T_{\phi(n)} + t) - w \right\Vert_{H_0^1} + \left\Vert \partial_t u(T_{\phi(n)} + t) \right\Vert_{L^2} \right) \xrightarrow{n \rightarrow \infty} 0.\]
In particular, this implies
\[J(w) = \lim_{n \rightarrow \infty} E\left( u(T_{\phi(n)}), \partial_t u(T_{\phi(n)}) \right) = E_\infty.\]

Now, we assume that 
\begin{equation}\label{eq_proof_thm_cv_bounded_sol_2}
\text{there is an at most countable number of stationary solution } w \text{ such that } J(w) = E_\infty.
\end{equation}
We show that this implies that $w$ is, in fact, independent of $(T_n)$. Assume by contradiction that there exist two sequences $(T_n)$ and $(T_n^\prime)$ such that for all $T > 0$, one has
\[\sup_{t \in [-T, T]} \left( \left\Vert u(T_n + t) - w \right\Vert_{H_0^1} + \left\Vert \partial_t u(T_n + t) \right\Vert_{L^2} \right) \xrightarrow{n \rightarrow \infty} 0\]
and
\[\sup_{t \in [-T, T]} \left( \left\Vert u(T_n^\prime + t) - w^\prime \right\Vert_{H_0^1} + \left\Vert \partial_t u(T_n^\prime + t) \right\Vert_{L^2} \right) \xrightarrow{n \rightarrow \infty} 0\]
where $w$ and $w^\prime$ are two distinct stationary solutions of (\ref{KG_nL}). Fix $\varphi \in L^2(\Omega)$ such that
\[\int_\Omega w \varphi \mathrm{d}x < \int_\Omega w^\prime \varphi \mathrm{d}x.\]
For $t \geq 0$, set $\alpha(t) = \int_\Omega u(t, x) \varphi(x) \mathrm{d}x$. The function $\alpha$ is real and continuous. One has
\[\lim_{n \rightarrow \infty} \alpha(T_n) = \int_\Omega w \varphi \mathrm{d}x < \int_\Omega w^\prime \varphi \mathrm{d}x = \lim_{n \rightarrow \infty} \alpha(T_n^\prime)\]
so by the intermediate value theorem, for all $\ell \in \left[ \int_\Omega w \varphi \mathrm{d}x, \int_\Omega w^\prime \varphi \mathrm{d}x \right]$, there exists a sequence $(t_n)_n$ such that $t_n \rightarrow + \infty$ and
\[\alpha(t_n) \xrightarrow{n \rightarrow \infty} \ell.\]
Hence, for all such $\ell$, there exists a stationary solution $w^{\prime \prime}$ such that $J(w^{\prime \prime}) = E_\infty$ and
\[\ell = \int_\Omega w^{\prime \prime} \varphi \mathrm{d}x.\]
This is a contradiction with (\ref{eq_proof_thm_cv_bounded_sol_2}).

Summarizing, we have proved that there exists a stationary solution $w$ such that for all sequence $(T_n)_n$ such that $T_n \rightarrow + \infty$, there exists a subsequence $(T_{\phi(n)})_n$ such that for all $T > 0$, one has
\[\sup_{t \in [-T, T]} \left( \left\Vert u(T_{\phi(n)} + t) - w \right\Vert_{H_0^1} + \left\Vert \partial_t u(T_{\phi(n)} + t) \right\Vert_{L^2} \right) \xrightarrow{n \rightarrow \infty} 0.\]
A basic contradiction argument ends the proof of Theorem \ref{thm_cv_global_solution}.

\section{Stabilisation of global solutions below the ground state energy}

\subsection{Case of a positive damping}

In the case of a positive damping, it is possible to make a short proof of the stabilization of solutions of (\ref{KG_nL_damped}) initiated in $\mathscr{K}^+$. That proof is based on the arguments of the proof of Proposition 2.5 of \cite{Joly-Laurent} and on Lemma \ref{lem_improving_K(u)_ineq}.

\begin{thm}\label{thm_positive damping}
Assume that $\gamma(x) \geq \alpha > 0$ on $\Omega$. Then for any $E_0 \in [0, d)$, there exists $C > 0$ and $\lambda > 0$ such that for all $\left( u^0, u^1 \right) \in \mathscr{K}^+$ such that $E\left( u^0, u^1 \right) \in [0, E_0]$, if $u$ is the solution of \textnormal{(\ref{KG_nL_damped})} with initial data $\left( u^0, u^1 \right)$, then
\[\left\Vert u(t) \right\Vert_{H_0^1} + \left\Vert \partial_t u(t) \right\Vert_{L^2} \leq C e^{-\lambda t}, \quad t \geq 0.\]
\end{thm}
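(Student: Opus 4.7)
The plan is to adapt the classical multiplier argument for exponentially damped wave equations, exploiting the uniform lower bound on $\gamma$ together with the improved coercivity of $K$ on $\mathscr{K}^+$ provided by Lemma \ref{lem_improving_K(u)_ineq}. By the invariance of $\mathscr{K}^+$ (Lemma \ref{lem_K+-_inv_flow_damped}) and the decrease of the energy, the assumption $E(u^0, u^1) \leq E_0$ persists along the flow, so with $\delta = d - E_0 > 0$ Lemma \ref{lem_improving_K(u)_ineq}\emph{(i)} yields a constant $c > 0$ (depending only on $E_0$) such that
\[K(u(t)) \geq c \, \|u(t)\|_{H_0^1}^2, \qquad t \geq 0.\]
Lemma \ref{lem_energy_eq_K+} furthermore gives $\|u(t)\|_{H_0^1}^2 + \|\partial_t u(t)\|_{L^2}^2 \asymp E(u(t), \partial_t u(t))$, so it suffices to prove that the energy $E(t) := E(u(t), \partial_t u(t))$ decays exponentially.

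The key step is to introduce the perturbed energy
\[F(t) = E(t) + \epsilon \int_\Omega u(t) \, \partial_t u(t) \, \mathrm{d}x, \qquad \epsilon > 0\ \text{small to be chosen.}\]
Since $\bigl| \int_\Omega u \, \partial_t u \, \mathrm{d}x \bigr| \lesssim \|u\|_{H_0^1}^2 + \|\partial_t u\|_{L^2}^2 \lesssim E(t)$ by Poincaré and Lemma \ref{lem_energy_eq_K+}, for $\epsilon$ sufficiently small one has $\tfrac{1}{2} E(t) \leq F(t) \leq 2 E(t)$. A direct computation using the equation satisfied by $u$ gives
\[\frac{\mathrm{d}}{\mathrm{d}t} \int_\Omega u \, \partial_t u \, \mathrm{d}x = \|\partial_t u\|_{L^2}^2 - K(u) - \int_\Omega \gamma u \, \partial_t u \, \mathrm{d}x,\]
and Cauchy--Schwarz together with Young's inequality absorbs the cross term into $C_1 \|\partial_t u\|_{L^2}^2 + \eta \|u\|_{H_0^1}^2$ for any $\eta > 0$. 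Combining this with the energy identity $E'(t) = - \int_\Omega \gamma |\partial_t u|^2 \, \mathrm{d}x \leq - \alpha \|\partial_t u\|_{L^2}^2$ and the coercivity $K(u) \geq c \|u\|_{H_0^1}^2$, we obtain
\[F'(t) \leq -\bigl( \alpha - \epsilon C_1 \bigr) \|\partial_t u\|_{L^2}^2 - \epsilon \bigl( c - \eta \bigr) \|u\|_{H_0^1}^2.\]
Choosing $\eta = c/2$ and then $\epsilon$ so small that $\epsilon C_1 \leq \alpha/2$ and $2\epsilon |\int u\, \partial_t u| \leq E$, we get $F'(t) \leq - \kappa \, (\|u\|_{H_0^1}^2 + \|\partial_t u\|_{L^2}^2) \leq - \kappa' F(t)$ for some $\kappa, \kappa' > 0$ depending only on $\alpha$, $c$, and $\Omega$ (hence only on $E_0$).

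Grönwall then gives $F(t) \leq F(0) \, e^{-\kappa' t}$, and the equivalence $F \asymp E$ together with Lemma \ref{lem_energy_eq_K+} yields
\[\|u(t)\|_{H_0^1}^2 + \|\partial_t u(t)\|_{L^2}^2 \leq C \, E_0 \, e^{- \kappa' t}, \qquad t \geq 0,\]
which is the desired bound with $\lambda = \kappa'/2$. There is no genuine obstacle; the main point requiring care is that the constants $c, \kappa, \kappa'$, and thus $C$ and $\lambda$, must be uniform over $\mathscr{K}^+ \cap \{E \leq E_0\}$, which is precisely what the quantitative Lemma \ref{lem_improving_K(u)_ineq} guarantees and why the hypothesis $E_0 < d$ is essential (the constant $c$ degenerates as $E_0 \nearrow d$).
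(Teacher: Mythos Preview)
Your proof is correct and follows essentially the same approach as the paper: both introduce the perturbed energy $E_\epsilon(t) = E(t) + \epsilon \int_\Omega u\,\partial_t u$, use Lemma~\ref{lem_improving_K(u)_ineq}\emph{(i)} to get $K(u(t)) \geq c\|u(t)\|_{H_0^1}^2$, and absorb the cross term $\epsilon\int_\Omega \gamma u\,\partial_t u$ via Young's inequality. The only cosmetic difference is the endgame: you obtain the differential inequality $F'(t) \leq -\kappa' F(t)$ and apply Gr\"onwall directly, whereas the paper integrates the same inequality on $[0,T]$, uses the monotonicity of the energy to deduce a discrete contraction $E(u(T),\partial_t u(T)) \leq \mu\, E(u^0,u^1)$ with $\mu<1$, and then iterates; your route is slightly more streamlined.
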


\begin{rem}\label{rem_constant_dependance}
Note that $C$ and $\lambda$ depend on $E_0$. The following result is false: there exist $C > 0$ and $\lambda > 0$ such that for $\left( u^0, u^1 \right) \in \mathscr{K}^+$ and $t \geq 0$, one has
\begin{equation}\label{eq_rem_constant_dependance}
\left\Vert u(t) \right\Vert_{H_0^1} + \left\Vert \partial_t u(t) \right\Vert_{L^2} \leq C e^{-\lambda t}.
\end{equation}
Indeed, for $n \in \mathbb{N}^\ast$, consider $\left( u_n^0, u_n^1 \right) = \left( \left(1 - \frac{1}{n} \right) Q, 0 \right)$ and write $u_n$ for the solution with initial data $\left( u_n^0, u_n^1 \right)$. Recall that if $j(\lambda) = J(\lambda Q)$, then one has $j^\prime(\lambda) > 0$ for $\lambda \in (0, 1)$ and $j^\prime(\lambda) < 0$ for $\lambda > 1$. In particular, this implies $K(u_n^0) > 0$ and $\left( u_n^0, u_n^1 \right) \in \mathscr{K}^+$. Hence, if (\ref{eq_rem_constant_dependance}) is true, then there exists $T > 0$ such that for all $n \in \mathbb{N}^\ast$, one has
\[\left\Vert u_n(T) \right\Vert_{H_0^1} \leq \frac{1}{2} \left\Vert Q \right\Vert_{H_0^1}.\]
By continuity of the source-to-solution operator (see Lemma \ref{lem_continuity_source_to_sol}), one has 
\[\left\Vert u_n(T) - Q \right\Vert_{H_0^1} \xrightarrow{n \rightarrow \infty} 0\]
and this is a contradiction.
\end{rem}

\begin{proof}
Fix $E_0 \in [0, d)$, and $\left( u^0, u^1 \right) \in \mathscr{K}^+$ such that $E\left(u^0, u^1 \right) \leq E_0$. For $\epsilon > 0$, we define
\[E_\epsilon(t) = E\left(u(t), \partial_t u(t) \right) + \epsilon \int_\Omega u(t, x) \partial_t u(t, x) \mathrm{d}x.\]
Recall that as $K(u(t)) \geq 0$, one has 
\[E\left(u(t), \partial_t u(t) \right) \geq \frac{1}{4} \left\Vert u(t) \right\Vert_{H_0^1}^2 + \frac{1}{2} \left\Vert \partial_t u(t) \right\Vert_{L^2}^2, \quad t \geq 0.\]
For $\epsilon > 0$ sufficiently small, one finds
\[\epsilon \left\vert \int_\Omega u(t, x) \partial_t u(t, x) \mathrm{d}x \right\vert \leq \frac{1}{2} E\left(u(t), \partial_t u(t) \right)\]
implying
\begin{equation}\label{eq_proof_thm_stab_positive_damping}
\frac{1}{2} E\left(u(t), \partial_t u(t) \right) \leq E_\epsilon(t) \leq \frac{3}{2} E\left(u(t), \partial_t u(t) \right), \quad t \geq 0.
\end{equation}
Using the energy equality, for $\epsilon > 0$ and $t \geq 0$, one obtains
\[E_\epsilon^\prime(t) = \int_\Omega (\epsilon - \gamma) \left\vert \partial_t u(t) \right\vert^2 \mathrm{d} x - \epsilon \left\Vert u(t) \right\Vert_{H_0^1}^2 + \epsilon \left\Vert u(t) \right\Vert_{L^4}^4 - \epsilon \int_\Omega \gamma u(t) \partial_t u(t) \mathrm{d} x.\]
As $\gamma \geq \alpha$, this gives
\[E_\epsilon^\prime(t) \leq (\epsilon - \alpha) \left\Vert \partial_t u(t) \right\Vert_{L^2}^2 - \epsilon \left\Vert u(t) \right\Vert_{H_0^1}^2 + \epsilon \left\Vert u(t) \right\Vert_{L^4}^4 - \epsilon \int_\Omega \gamma u(t) \partial_t u(t) \mathrm{d} x.\]
Note that for $t \geq 0$, one has
\[J(u(t)) \leq E\left( u(t), \partial_t u(t) \right) \leq E\left( u^0, u^1 \right) \leq E_0 < d\]
so by Lemma \ref{lem_improving_K(u)_ineq} \emph{(i)}, there exists $C > 0$ such that $K(u(t)) \geq C \left\Vert u(t) \right\Vert_{H_0^1}^2$ for all $t \geq 0$. This gives 
\[E_\epsilon^\prime(t) \leq (\epsilon - \alpha) \left\Vert \partial_t u(t) \right\Vert_{L^2}^2 - \epsilon C \left\Vert u(t) \right\Vert_{H_0^1}^2 - \epsilon \int_\Omega \gamma u(t) \partial_t u(t) \mathrm{d} x.\]
Writing 
\[\epsilon \left\vert \int_\Omega u(t, x) \partial_t u(t, x) \mathrm{d}x \right\vert \lesssim \epsilon^{\frac{3}{2}} \left\Vert u(t) \right\Vert_{H_0^1}^2 + \epsilon^{\frac{1}{2}} \left\Vert \partial_t u(t) \right\Vert_{L^2}^2\]
one finds
\[E_\epsilon^\prime(t) \leq (\epsilon - \alpha + C^\prime \sqrt{\epsilon}) \left\Vert \partial_t u(t) \right\Vert_{L^2}^2 + \epsilon \left(C^\prime \sqrt{\epsilon} - C \right) \left\Vert u(t) \right\Vert_{H_0^1}^2\]
for some $C^\prime > 0$. Fix $T > 0$. For $\epsilon > 0$ sufficiently small, one has 
\[E_\epsilon(T) - E_\epsilon(0) \lesssim - \int_0^T \left( \left\Vert \partial_t u(t) \right\Vert_{L^2}^2 + \left\Vert u(t) \right\Vert_{H_0^1}^2\right) \mathrm{d}t,\]
and using (\ref{eq_proof_thm_stab_positive_damping}), one finds
\[E\left(u(T), \partial_t u(T) \right) - 3 E\left(u(0), \partial_t u(0) \right) \lesssim - \int_0^T E\left(u(t), \partial_t u(t) \right) \mathrm{d}t.\]
As the energy is decreasing, this gives 
\[E\left(u(T), \partial_t u(T) \right) - 3 E\left(u(0), \partial_t u(0) \right) \lesssim - T E\left(u(T), \partial_t u(T) \right).\]
Choosing $T$ sufficiently large, one obtains
\[E\left(u(T), \partial_t u(T) \right) \leq \mu E\left(u(0), \partial_t u(0) \right)\]
for some $\mu \in (0, 1)$. As $\left( u(t), \partial_t u(t) \right) \in \mathscr{K}^+$ for all $t \geq 0$, we can iterate this process to get
\[E\left(u(nT), \partial_t u(nT) \right) \leq \mu^n E\left(u^0, u^1 \right), \quad n \in \mathbb{N}.\]
This completes the proof.
\end{proof}

\subsection{Case of a damping satisfying the GCC}

Here, we prove Theorem \ref{thm_stab_GCC_introduction}, using Proposition \ref{prop_cv_sequence_time}. We spit the proof into two steps.

\paragraph{Step 1: observability estimate and application of Proposition \ref{prop_cv_sequence_time}.}
To prove Theorem \ref{thm_stab_GCC_introduction}, it suffices to show the following observability inequality: there exist $C > 0$ and $T > 0$ such that for all $\left( u^0, u^1 \right) \in \mathscr{K}^+$ with $E\left(u^0, u^1 \right) \leq E_0$, the solution $u$ of (\ref{KG_nL_damped}) with initial data $\left(u^0, u^1 \right)$ satisfies
\[E\left(u^0, u^1 \right) \leq C \int_0^T \int_\Omega \gamma(x) \left\vert \partial_t u(t, x) \right\vert^2 \mathrm{d}x \mathrm{d}t.\]
Indeed, if that observability inequality holds, then using the energy equality, one obtains
\[E\left(u(T), \partial_t u(T) \right) \leq \left( 1 - \frac{1}{C} \right) E\left(u^0, u^1 \right).\]
As $\left( u(t), \partial_t u(t) \right) \in \mathscr{K}^+$ for all $t \geq 0$, we can iterate this process to get
\[E\left(u(nT), \partial_t u(nT) \right) \leq \left( 1 - \frac{1}{C} \right)^n E\left(u^0, u^1 \right), \quad n \in \mathbb{N}.\]
This proves that $t \mapsto E\left(u(t), \partial_t u(t) \right)$ decays exponentially. As $K(u(t)) \geq 0$, one has 
\[E\left(u(t), \partial_t u(t) \right) \geq \frac{1}{4} \left\Vert u(t) \right\Vert_{H_0^1}^2 + \frac{1}{2} \left\Vert \partial_t u(t) \right\Vert_{L^2}^2\]
and this gives the conclusion. 

We prove the observability inequality by contradiction. We assume that there exists a sequence $(T_n)_{n \geq 1}$, satisfying $T_n \rightarrow + \infty$, such that for all $n \in \mathbb{N}$, $n \geq 1$, there exists $\left( u^0_n, u^1_n \right) \in \mathscr{K}^+$ such that $E\left( u^0_n, u^1_n \right) \leq E_0$ and
\begin{equation}\label{eq_proof_thm_stab_GCC_1}
\int_0^{T_n} \int_\Omega \gamma(x) \left\vert \partial_t u_n(t, x) \right\vert^2 \mathrm{d}x \mathrm{d}t < \frac{1}{n} E\left(u_n^0, u_n^1 \right).
\end{equation}
For $n \geq 1$ and $t \geq 0$, as $K(u_n(t)) \geq 0$, one has 
\[\left\Vert u_n(t) \right\Vert_{H_0^1}^2 + \left\Vert \partial_t u_n(t) \right\Vert_{L^2}^2 \lesssim E\left(u_n(t), \partial_t u_n(t) \right) \leq E\left(u_n^0, u_n^1 \right) \leq E_0.\]
Write $T_n^\prime = \frac{T_n}{2}$. For all $T > 0$, if $n \in \mathbb{N}$ is sufficiently large, then 
\[\int_{T_n^\prime - T}^{T_n^\prime + T} \int_\Omega \gamma \left\vert \partial_t u_n \right\vert^2 \mathrm{d}x \mathrm{d} t \leq \int_0^{T_n} \int_\Omega \gamma \left\vert \partial_t u_n \right\vert^2 \mathrm{d}x \mathrm{d}t.\]
This gives 
\[\int_{T_n^\prime - T}^{T_n^\prime + T} \int_\Omega \gamma \left\vert \partial_t u_n \right\vert^2 \mathrm{d}x \mathrm{d} t \xrightarrow{n \rightarrow \infty} 0, \quad T > 0.\]
Hence, we can apply Proposition \ref{prop_cv_sequence_time}: there exist an increasing function $\phi: \mathbb{N} \rightarrow \mathbb{N}$ and a stationary solution $w$ of \textnormal{(\ref{KG_nL})} such that
\[\sup_{t \in [-T, T]} \left( \left\Vert u_{\phi(n)}(T_{\phi(n)}^\prime + t) - w \right\Vert_{H_0^1} + \left\Vert \partial_t u_{\phi(n)}(T_{\phi(n)}^\prime + t) \right\Vert_{L^2} \right) \xrightarrow{n \rightarrow \infty} 0, \quad T > 0.\]

As the energy is nondecreasing, one has
\[J(w) = \lim_{n \rightarrow \infty} E\left(u_{\phi(n)}(T_{\phi(n)}^\prime), \partial_t u_{\phi(n)}(T_{\phi(n)}^\prime) \right) \leq E_0 < d.\]
Hence, $w$ is a stationary solution satisfying $J(w) \in [0, d)$: this gives $w = 0$. Using the energy equality, one has
\[E\left(u_n(T_n^\prime), \partial_t u_n(T_n^\prime) \right) \geq E\left(u_n(T_n), \partial_t u_n(T_n) \right) = E\left(u_n^0, u_n^1 \right) - \int_0^{T_n} \int_\Omega \gamma(x) \left\vert \partial_t u_n(t, x) \right\vert^2 \mathrm{d}x \mathrm{d}t\]
so that (\ref{eq_proof_thm_stab_GCC_1}) gives
\[E\left(u_n(T_n^\prime), \partial_t u_n(T_n^\prime) \right) \geq \left( 1 - \frac{1}{n} \right) E\left(u_n^0, u_n^1 \right).\]
In particular, one obtains
\[E\left(u_{\phi(n)}^0, u_{\phi(n)}^1 \right) \xrightarrow{n \rightarrow \infty} 0.\]
For $n \in \mathbb{N}^\ast$, set $\alpha_n = \sqrt{E\left(u_n^0, u_n^1 \right)}$. In the next step, we will consider the equation scaled by $\alpha_n$ to get a contradiction.

\paragraph{Step 2: the scaled equation.} For $n \in \mathbb{N}$, set $w_n = \frac{u_n}{\alpha_n}$. For $n \in \mathbb{N}^\ast$, $w_n$ is the solution of
\[\square w_n + \beta w_n = \alpha_n^2 w_n^3\]
and one has
\[\left\Vert w_n(T_n) \right\Vert_{H_0^1}^2 + \left\Vert \partial_t w_n(T_n) \right\Vert_{L^2}^2 = \frac{1}{\alpha_n^2} \left(\left\Vert u_n(T_n) \right\Vert_{H_0^1}^2 + \left\Vert \partial_t u_n(T_n) \right\Vert_{L^2}^2 \right) \gtrsim \frac{E\left(u_n(T_n), \partial_t u_n(T_n) \right)}{\alpha_n^2}.\]
Using the energy equality and (\ref{eq_proof_thm_stab_GCC_1}), one finds
\begin{equation}\label{eq_proof_thm_stab_GCC_2}
\left\Vert w_n(T_n) \right\Vert_{H_0^1}^2 + \left\Vert \partial_t w_n(T_n) \right\Vert_{L^2}^2 \gtrsim \frac{1}{\alpha_n^2} \left(E\left(u_n^0, u_n^1 \right) - \frac{1}{n} E\left(u_n^0, u_n^1 \right) \right) = 1 - \frac{1}{n} \geq \frac{1}{2}, \quad n \geq 2.
\end{equation}

Recall that $A$ denotes the infinitesimal generator of the linear part of (\ref{KG_nL_damped}), that $e^{tA}$ is the associated semi-group, and set 
\[W_n = \left(w_n, \partial_t w_n \right) \quad \text{ and } \quad F_n = \left(0, \alpha_n^2 w_n^3 \right).\]
Using the Duhamel formula, we can write
\begin{align}
W_n(T_n) & = e^{T_n A} W_n(0) + \int_0^{T_n} e^{(T_n - s) A} F_n(s) \mathrm{d}s \nonumber \\ 
& = e^{T_n A} W_n(0) + \sum_{k = 0}^{\lfloor T_n \rfloor - 1} e^{(T_n - k) A} \int_0^1 e^{- s A} F_n(k + s) \mathrm{d}s + \int_{\lfloor T_n \rfloor}^{T_n} e^{(T_n - s) A} F_n(s) \mathrm{d}s. \label{eq_proof_thm_stab_GCC_3} 
\end{align}
As $K(u_n(0)) \geq 0$, one has 
\[\left\Vert w_n(0) \right\Vert_{H_0^1}^2 + \left\Vert \partial_t w_n(0) \right\Vert_{L^2}^2 = \frac{1}{\alpha_n^2} \left(\left\Vert u_n(0) \right\Vert_{H_0^1}^2 + \left\Vert \partial_t u_n(0) \right\Vert_{L^2}^2 \right) \lesssim \frac{E\left(u_n(0), \partial_t u_n(0) \right)}{\alpha_n^2} = 1.\]
Hence, the sequence $\left( W_n(0) \right)$ is bounded, and linear stabilisation (Theorem \ref{thm_stab_linear}) gives
\[e^{T_n A} W_n(0) \xrightarrow{n \rightarrow \infty} 0\]
in $H_0^1(\Omega) \times L^2(\Omega)$. Next, write
\[\left\Vert \int_0^1 e^{- s A} F_n(k + s) \mathrm{d}s \right\Vert_{H_0^1 \times L^2} \lesssim \left\Vert F_n(k + \cdot) \right\Vert_{L^1((0, 1), H_0^1 \times L^2)} = \left\Vert \alpha_n^2 w_n^3 \right\Vert_{L^1((k, k + 1), L^2)}.\]
A Sobolev embedding gives
\[\left\Vert \int_0^1 e^{- s A} F_n(k + s) \mathrm{d}s \right\Vert_{H_0^1 \times L^2} \lesssim \frac{1}{\alpha_n} \left\Vert u_n \right\Vert_{L^3((k, k + 1), L^6)}^3 \lesssim \frac{1}{\alpha_n} \left\Vert \left(u_n, \partial_t u_n \right) \right\Vert_{L^\infty((k, k+1), H_0^1 \times L^2)}^3.\]
Using $K(u_n) \geq 0$ and the fact that the energy is nonincreasing, one obtains
\[\left\Vert \int_0^1 e^{- s A} F_n(k + s) \mathrm{d}s \right\Vert_{H_0^1 \times L^2} \lesssim \frac{E\left(u_n^0, u_n^1 \right)^\frac{3}{2}}{\alpha_n} = \sqrt{\alpha_n}.\]
Hence linear stabilisation gives
\[\left\Vert e^{(T_n - k) A} \int_0^1 e^{- s A} F_n(k + s) \mathrm{d}s \right\Vert_{H_0^1 \times L^2} \lesssim e^{\lambda (k - T_n)} \sqrt{\alpha_n} \]
where $\lambda > 0$ is a constant, implying
\[\left\Vert \sum_{k = 0}^{\lfloor T_n \rfloor - 1} e^{(T_n - k) A} \int_0^1 e^{- s A} F_n(k + s) \mathrm{d}s \right\Vert_{H_0^1 \times L^2} \lesssim \sum_{k = 0}^{\lfloor T_n \rfloor - 1} e^{\lambda (k - T_n)} \sqrt{\alpha_n} \lesssim \sqrt{\alpha_n}.\]
Similarly, one has
\[\left\Vert \int_{\lfloor T_n \rfloor}^{T_n} e^{(T_n - s) A} F_n(s) \mathrm{d}s \right\Vert_{H_0^1 \times L^2} \lesssim \sqrt{\alpha_n}.\]
As $\alpha_{\phi(n)} \rightarrow 0$, coming back to (\ref{eq_proof_thm_stab_GCC_3}), one finds 
\[W_{\phi(n)}(T_{\phi(n)}) \xrightarrow{n \rightarrow \infty} 0\]
in $H_0^1(\Omega) \times L^2(\Omega)$, a contradiction with (\ref{eq_proof_thm_stab_GCC_2}). This completes the proof of Theorem \ref{thm_stab_GCC_introduction}.

\printbibliography

@article {BLR,
    AUTHOR = {Bardos, Claude and Lebeau, Gilles and Rauch, Jeffrey},
     TITLE = {Sharp sufficient conditions for the observation, control, and
              stabilization of waves from the boundary},
   JOURNAL = {SIAM J. Control Optim.},
  FJOURNAL = {SIAM Journal on Control and Optimization},
    VOLUME = {30},
      YEAR = {1992},
    NUMBER = {5},
     PAGES = {1024--1065},
       DOI = {10.1137/0330055}
}

@misc{Burq_Raugel_Schlag_weak,
  url = {https://arxiv.org/abs/1801.06735},
  author = {Burq, Nicolas and Raugel, Genevieve and Schlag, Wilhelm},
  title = {Long time dynamics for weakly damped nonlinear Klein-Gordon equations},
  publisher = {arXiv},
  year = {2018},
}

@article{Cazenave,
  title={Uniform estimates for solutions of nonlinear Klein-Gordon equations},
  author={Thierry Cazenave},
  journal={Journal of Functional Analysis},
  year={1985},
  volume={60},
  pages={36-55},
  url={https://api.semanticscholar.org/CorpusID:121770305}
}

@article{Payne-Sattinger,
  title={Saddle points and instability of nonlinear hyperbolic equations},
  author={L. E. Payne and D. H. Sattinger},
  journal={Israel Journal of Mathematics},
  year={1975},
  volume={22},
  pages={273-303},
  DOI={10.1007/BF02761595}
}

@article{Joly-Laurent,
author = {Romain Joly and Camille Laurent},
title = {{Stabilization for the semilinear wave equation with geometric control condition}},
volume = {6},
journal = {Analysis \& PDE},
number = {5},
publisher = {MSP},
pages = {1089 -- 1119},
year = {2013},
doi = {10.2140/apde.2013.6.1089}
}

@article{RobbianoZuily,
    AUTHOR = {Robbiano, Luc and Zuily, Claude},
     TITLE = {Uniqueness in the {C}auchy problem for operators with
              partially holomorphic coefficients},
   JOURNAL = {Invent. Math.},
    VOLUME = {131},
      YEAR = {1998},
    NUMBER = {3},
     PAGES = {493--539},
       DOI = {10.1007/s002220050212}
}

@article {HaleRaugel,
    AUTHOR = {Hale, Jack K. and Raugel, Genevi\`eve},
     TITLE = {Regularity, determining modes and {G}alerkin methods},
   JOURNAL = {J. Math. Pures Appl. (9)},
    VOLUME = {82},
      YEAR = {2003},
    NUMBER = {9},
     PAGES = {1075--1136},
       DOI = {10.1016/S0021-7824(03)00045-X}
}

@article {DehmanLebeauZuazua,
    AUTHOR = {Dehman, Belhassen and Lebeau, Gilles and Zuazua, Enrique},
     TITLE = {Stabilization and control for the subcritical semilinear wave
              equation},
   JOURNAL = {Ann. Sci. \'{E}cole Norm. Sup. (4)},
    VOLUME = {36},
      YEAR = {2003},
    NUMBER = {4},
     PAGES = {525--551},
       DOI = {10.1016/S0012-9593(03)00021-1}
}

@book{GilbargTrudinger,
  doi = {10.1007/978-3-642-61798-0},
  year = {2001},
  publisher = {Springer Berlin Heidelberg},
  author = {David Gilbarg and Neil S. Trudinger},
  title = {Elliptic Partial Differential Equations of Second Order}
}

@article{Alaoui,
author = {L. Aloui and S. Ibrahim and K. Nakanishi},
title = {Exponential Energy Decay for Damped Klein–Gordon Equation with Nonlinearities of Arbitrary Growth},
journal = {Communications in Partial Differential Equations},
volume = {36},
number = {5},
pages = {797-818},
year  = {2010},
publisher = {Taylor & Francis},
doi = {10.1080/03605302.2010.534684},
}

@article{JolyLaurentBis,
author = {Joly, Romain and Laurent, Camille},
title = {A Note on the Semiglobal Controllability of the Semilinear Wave Equation},
journal = {SIAM Journal on Control and Optimization},
volume = {52},
number = {1},
pages = {439-450},
year = {2014},
doi = {10.1137/120891174},
}

@article{BurqRaugelSchlag,
  doi = {10.24033/asens.2349},
  year = {2017},
  publisher = {Societe Mathematique de France},
  volume = {50},
  number = {6},
  pages = {1447--1498},
  author = {Nicolas Burq and  Genevi{\`{e}}ve Raugel and  Wilhelm Schlag},
  title = {Long time dynamics for damped Klein-Gordon equations},
  journal = {Annales scientifiques de l{\textquotesingle}{\'{E}}cole normale sup{\'{e}}rieure}
}

@Article{GinibreVelo,
 Author = {Ginibre, J. and Velo, G.},
 Title = {The global {Cauchy} problem for the nonlinear {Klein}-{Gordon} equation. {II}},
 FJournal = {Annales de l'Institut Henri Poincar{\'e}. Analyse Non Lin{\'e}aire},
 Journal = {Ann. Inst. Henri Poincar{\'e}, Anal. Non Lin{\'e}aire},
 ISSN = {0294-1449},
 Volume = {6},
 Number = {1},
 Pages = {15--35},
 Year = {1989},
 Language = {English},
 DOI = {10.1016/S0294-1449(16)30329-8},
}

@article{KriegerXiang,
  title = {Boundary Stabilization of Focusing NLKG near Unstable  Equilibria: Radial Case},
  author = {Krieger, Joachim and Xiang, Shengquan},
  journal = {Pure and Applied Analysis},
  year = {2020},
  note = {2010 Mathematics Subject Classification. 35B34, 35B37,  35B40},
  url = {http://infoscience.epfl.ch/record/277137},
}

@article{MelroseSjostrand,
  doi = {10.1002/cpa.3160310504},
  year = {1978},
  month = sep,
  publisher = {Wiley},
  volume = {31},
  number = {5},
  pages = {593--617},
  author = {R. B. Melrose and J. Sj\"{o}strand},
  title = {Singularities of boundary value problems. I},
  journal = {Communications on Pure and Applied Mathematics}
}

@Book{Adams-Fournier,
 Author = {Adams, Robert A. and Fournier, John J. F.},
 Title = {Sobolev spaces},
 Edition = {2nd ed.},
 FSeries = {Pure and Applied Mathematics (Academic Press)},
 Series = {Pure Appl. Math., Academic Press},
 ISSN = {0079-8169},
 Volume = {140},
 ISBN = {0-12-044143-8},
 Year = {2003},
 Publisher = {New York, NY: Academic Press},
 Language = {English},
}

@misc{perrinUniform,
      title={Uniform estimates for solutions of nonlinear focusing damped wave equations}, 
      author={Thomas Perrin},
      year={2024},
      eprint={2403.06541},
      archivePrefix={arXiv},
      primaryClass={math.AP}
}

\noindent
\textsc{Perrin Thomas:} \texttt{perrin@math.univ-paris13.fr}

\noindent
\textit{Laboratoire Analyse Géométrie et Application, Institut Galilée - UMR 7539, CNRS/Université Sorbonne Paris Nord, 99 avenue J.B. Clément, 93430 Villetaneuse, France}

\end{document}